\newcommand{\SL}{\text{SL}}
\newcommand{\GL}{\text{GL}}
\newcommand{\SU}{\text{SU}}
\newcommand{\tr}{\text{tr}}
\newcommand{\X}{\mathfrak X}
\newcommand{\Y}{\mathfrak Y}
\newcommand{\Z}{\mathfrak Z}
\newcommand{\bi}{{\bf i}}
\newcommand{\nl}{\natural}
\newcommand{\aug}{\text{Aug}}
\newcommand{\dc}[1]{\ensuremath{\Sigma(#1)}}
\newcommand{\hc}{\ensuremath{HC_0^{ab}}}
\title[Character varieties via the cord ring]{Character varieties of knot complements and branched double-covers via the cord ring}
\author{Christopher R.\ Cornwell}
\address{CIRGET \\ Universit\'e du Qu\'ebec \`a Montr\'eal}
\email{cornwell@cirget.ca}
\begin{document}

%\listoftodos
%\newpage

\begin{abstract}We study the relationship between Ng's abelian cord ring and $\SL_2\bb C$ characters of the two-fold branched cover $\dc K$. Augmentations, and their corresponding rank, play a central role in the relationship. Our study also leads to a correspondence between trace-free $\SL_2\bb C$ characters of a knot complement and augmentations. Whether the trace-free character is metabelian or not is determined by the rank of the corresponding augmentation. Necessary and sufficient conditions are given on the augmentation for it to correspond to an $SU(2)$ character. This provides an effective tool for finding representations of relevance to singular instanton homology. We also give necessary and sufficient conditions for an augmentation to induce a non-abelian $\SL_2\bb R$ character of $\dc K$ and exploit this to discuss branched covers with left-orderable fundamental group.
\end{abstract}

\maketitle

\section{Introduction}
\label{sec-introduction}

For a given compact connected 3-manifold $M$ we denote the $\SL_2\bb C$ character variety of $\pi_1(M)$ by $X(M)$. Let $\dc K$ be the branched double-cover over a knot $K\subset S^3$ and let $E_K$ be the complement of a tubular neighborhood of $K$. Here we study $X(\dc K)$ and the meridian-traceless characters $X_{TF}(E_K)\subset X(E_K)$ by exploiting a relationship to the abelian cord ring \cite{Ng05b}, which is related to the knot contact homology of $K$.

Knot contact homology $HC_\ast(K)$ is a non-commutative algebra associated to $K$ which arises out of symplectic field theory\footnote{For the experts, we are only considering the version of knot contact homology with $Q=1$, i.e.\ as an algebra over $\bb Z[H_1(\Lambda_K)]$, where $\Lambda_K$ is the conormal torus over $K$.}, and its zero-graded part $HC_0(K)$ admits a description, as the \textit{cord algebra}, that solely utilizes the topology of the pair $(S^3,K)$. We are interested in an application arising from the cord algebra description which relates $HC_0(K)$ to $X(\dc K)$. More precisely, the complexification of the \textit{abelian cord ring} $\hc(K)$, which is a commutative quotient of $HC_0(K)$, admits a homomorphism $\Z:\hc(K)\otimes\bb C \to \bb C[X(\dc K)]$ which is an isomorphism if $K$ is 2-bridge \cite{Ng05b}. We will extend this result to a more general context.

Call a homomorphism $\ve:\hc(K)\to\bb C$ a \textit{reflective augmentation} of $K$. A key piece of input for what follows is the notion of the \textit{rank} of a (reflective) augmentation coming out of the author's previous work (see Section \ref{SecBGC_K}). Write $\mathfrak r^{ab}(K)$ for the set of ranks of reflective augmentations of $K$; note, $\mathfrak r^{ab}(K)$ is finite for each $K$ {--} in fact, bounded above by the bridge number \cite{Cor14}. For any $r>0$ there exist knots with $r = \text{max}\ \mathfrak r^{ab}(K)$, e.g.\ $(r,s)$-torus knots with $r<s$ (see also \cite[Section 5]{Cor14b}). 

There is a natural involution $\tau$ on $\dc K$ which is of interest to us. We focus on the set of $\tau$-equivariant characters in $X(\dc K)$, denoted $X^\tau(\dc K)$, which is algebraic (see Section \ref{sec-dcK}). Note that 3-bridge knots (among others) satisfy the condition of the following theorem.

\begin{thm}[see Theorem \ref{thm-equivariant}] Suppose that $\text{max}\ \mathfrak r^{ab}(K)\le 3$. Then $X^\tau(\dc K) = X(\dc K)$ and $(\hc(K)\otimes\bb C)/\sqrt0 \cong \bb C[X(\dc K)]$.
\label{thm-allequivariant-isomorphism}
\end{thm}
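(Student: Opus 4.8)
The plan is to deduce the ring isomorphism formally from the equality $X^\tau(\dc K)=X(\dc K)$, and to obtain that equality from the bound on $\max\mathfrak r^{ab}(K)$ via the correspondence between reflective augmentations and representations. First I would explain why the ring statement is secondary. By construction $\Z$ sends a cord $\gamma$ to (a normalization of) the trace function of the loop $\widetilde\gamma\in\pi_1(\dc K)$ obtained by concatenating the two lifts of $\gamma$; since the covering involution $\tau$ interchanges these lifts one has $\tau_\ast[\widetilde\gamma]=[\widetilde\gamma]^{-1}$ up to conjugacy, so $\tr(\rho(\widetilde\gamma))$ is $\tau$-invariant and $\operatorname{image}(\Z)\subseteq\bb C[X(\dc K)]^\tau$. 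By \cite{Ng05b} and Section \ref{SecBGC_K}, this inclusion is an equality modulo nilpotents and, on closed points, $\Z$ induces a bijection between $X^\tau(\dc K)$ and the set of reflective augmentations. Granting $X^\tau(\dc K)=X(\dc K)$ we then have $\bb C[X(\dc K)]^\tau=\bb C[X(\dc K)]$, so $\Z$ is surjective and bijective on closed points; a surjection of finitely generated $\bb C$-algebras that is bijective on closed points has kernel equal to the nilradical, so $(\hc(K)\otimes\bb C)/\sqrt0\xrightarrow{\ \sim\ }\bb C[X(\dc K)]$ (identifying the target with its reduction if necessary).

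So the content is $X^\tau(\dc K)=X(\dc K)$, i.e.\ every $\SL_2\bb C$ character of $\pi_1(\dc K)$ is $\tau$-equivariant. Reducible characters are immediate: such a character is that of a diagonal representation $\lambda\oplus\lambda^{-1}$ with $\lambda\colon H_1(\dc K)\to\bb C^\ast$, and since $\tau_\ast$ acts by $-\operatorname{id}$ on $H_1(\dc K)$ (standard for double branched covers of knots, via projection composed with transfer landing in $H_1(S^3)=0$) we get $\lambda\circ\tau_\ast=\lambda^{-1}$, so the character is $\tau$-invariant. For an irreducible character $\chi_\rho$, suppose toward a contradiction that $\rho\not\cong\rho\circ\tau$. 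Pulling $\rho$ back along $\pi_1(\widetilde E_K)\twoheadrightarrow\pi_1(\dc K)$ — here $\dc K$ is $\widetilde E_K$ filled along the lift of $\mu^2$, so $\mu^2$ acts trivially — and inducing up to $\pi_1(E_K)$ produces $\widehat\rho\colon\pi_1(E_K)\to\GL_4(\bb C)$ with $\widehat\rho|_{\pi_1(\widetilde E_K)}\cong\rho\oplus(\rho\circ\tau)$ and $\widehat\rho(\mu)^2=I$. Since $\rho$ and $\rho\circ\tau$ are non-isomorphic irreducibles, $\widehat\rho$ is (in the terminology of Section \ref{SecBGC_K}) a reflective representation not conjugate to one of smaller rank; it therefore yields a reflective augmentation of $K$ of rank $4$, contradicting $\max\mathfrak r^{ab}(K)\le 3$. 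Hence $\rho\cong\rho\circ\tau$ and $\tau^\ast\chi_\rho=\chi_\rho$. (One could alternatively try a presentation argument, using that a knot with small augmentation ranks has $\pi_1(\dc K)$ of controlled generating rank with $\tau$ acting by conjugate-inversion of generators, but the route above uses the hypothesis exactly as stated.)

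The main obstacle is the last step: confirming that $\widehat\rho$, after any normalization needed for the eigenvalue pattern of the involution $\widehat\rho(\mu)$ to be of the type allowed in Section \ref{SecBGC_K}, genuinely falls under the reflective-augmentation correspondence and, crucially, has rank exactly $4$ rather than something $\le 3$ — this is where the hypotheses that $\rho$ is irreducible and not $\tau$-equivariant are needed, to prevent $\widehat\rho$ from being conjugate to a reflective representation on a space of dimension at most $3$. A secondary point, required to get the isomorphism in the exact form stated rather than only after a further reduction, is to check that the nilradical on the cord-ring side accounts for all of the non-reducedness of $\bb C[X(\dc K)]^\tau$; this is automatic for $2$-bridge knots by \cite{Ng05b}, and one should verify that the relevant part of that argument persists under $\max\mathfrak r^{ab}(K)\le 3$.
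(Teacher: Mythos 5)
Your reduction of the ring isomorphism to the equality $X^\tau(\dc K)=X(\dc K)$ is sound (the paper does essentially this, identifying $X^\tau(\dc K)$ with the variety of rank-$\le 3$ reflective augmentations via Theorem \ref{thm-varietycorrespondence} and then applying the Nullstellensatz; note also that $\bb C[X(\dc K)]$ is already reduced, so your worry about non-reducedness of the target is vacuous), and your treatment of reducible characters is fine. The gap is in the key step for irreducible characters. You propose to extract a rank-four reflective augmentation from the induced representation $\widehat\rho=\text{Ind}_{\wt\pi}^{\pi_1(E_K)}\rho$ when $\rho\not\cong\rho\circ\tau$. But $\widehat\rho(\mu)$ is an involution interchanging the two $2$-dimensional summands, so its eigenvalues are $1,1,-1,-1$; membership in $R_{\aug}(E_K,4)$ requires the eigenvalue $1$ to occur with multiplicity $3$, and twisting by a $1$-dimensional character rescales all four eigenvalues simultaneously, so no normalization can produce the required pattern. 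Hence Theorem \ref{thm-correspondenceAugRep} does not apply to $\widehat\rho$, and no augmentation {--} let alone a reflective one of rank $4$ {--} is produced. You flagged this yourself as ``the main obstacle,'' but it is not a checkable technicality; it is where the argument breaks.

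The paper closes this step in the opposite, more direct way, with no induced representations. Evaluating $\Z$ at $\chi_\rho$ already gives a genuine reflective augmentation $\ve$ with $C_D(\ve)=\left[\tr(A_iA_j^{-1})\right]$, where $A_i=\rho(g_i)$ and $A_1=\text{Id}$. The hypothesis $\text{max}\ \mathfrak r^{ab}(K)\le 3$ forces every size-$4$ principal minor of this symmetric matrix to vanish, which by the remark following Lemma \ref{lem-MagnusTraceless} means $\D(A_i,A_j,A_k)=0$ for all triples. Irreducibility supplies a pair with $\tr([A_i,A_j])\ne 2$, so Lemma \ref{lem-MagnusTraceless} together with Remark \ref{rem-Tappliesmore} yields a single traceless $T$ with $(TA_i)^2=-\text{Id}$ for every generator, and a one-line computation converts this into $\rho(\tau_*g)=T\rho(g)T^{-1}$, i.e.\ $\chi_\rho\in X^\tau(\dc K)$. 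If you wish to keep your contrapositive framing, the correct statement is that a non-equivariant irreducible $\rho$ forces some $\D(A_i,A_j,A_k)\ne 0$, hence a nonvanishing $4\times4$ principal minor of $C_D(\ve)$ for $\ve=\Z^*(\chi_\rho)$, so the rank-$4$ reflective augmentation is obtained by evaluating trace functions on cords, not by inducing representations.
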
 

Our proof of Theorem \ref{thm-allequivariant-isomorphism} passes through a correspondence between reflective augmentations (of rank at most 3) and trace-free characters in $X_{TF}(E_K)$ (see Theorem \ref{thm-varietycorrespondence}). The correspondence is such that we obtain the following.

\begin{thm}[see Corollary \ref{thm-IrrMetabelian}]Reflective augmentations of $K$ with rank two (resp.\ three) are in bijective (resp.\ 2-1) correspondence with irreducible metabelian (resp.\ trace-free non-metabelian) characters in $X(E_K)$.
\label{cor-IrrMetabelian}
\end{thm}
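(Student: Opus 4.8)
The plan is to leverage the variety correspondence from Theorem \ref{thm-varietycorrespondence} and trace through exactly how the rank of a reflective augmentation $\ve$ controls the algebraic nature of the associated trace-free character. I would first recall that the correspondence of Theorem \ref{thm-varietycorrespondence} attaches to each rank-$\le 3$ reflective augmentation $\ve$ of $K$ a character $\chi_\ve \in X_{TF}(E_K)$ of a representation $\rho_\ve\colon\pi_1(E_K)\to\SL_2\bb C$ sending the meridian to a trace-free element. The key structural fact to isolate is that the \emph{image} of $\rho_\ve$ (up to conjugacy) is read off from the rank: a rank-one augmentation yields an abelian (binary dihedral through the center) character, a rank-two augmentation yields a representation whose image lies in the infinite binary dihedral group $N(\bb C^\ast)\subset\SL_2\bb C$ (the normalizer of a maximal torus), hence an irreducible metabelian character, and a rank-three augmentation yields a representation whose image is \emph{not} contained in any such normalizer, hence a trace-free irreducible character that is non-metabelian. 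This identification is where the notion of rank, as developed in Section \ref{SecBGC_K}, does its work — the rank measures the dimension (over $\bb C$) of a certain space built from the augmentation, and one must match this with the dimension of the span of the image of $\rho_\ve$ acting on $\bb C^2$ (or on $\mf{sl}_2$), so that rank $2$ forces reducibility of the adjoint/symmetric-square representation while rank $3$ forbids it.

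Next I would establish the count. For rank two: since irreducible metabelian $\SL_2\bb C$ characters of a knot complement are classically parametrized (Nagasato–Yamaguchi, or via the $\bb Z/2$-homology of $\dc K$) and every such character is automatically meridian-traceless, I need to check that $\ve\mapsto\chi_\ve$ is injective on rank-two augmentations and hits every irreducible metabelian character. Injectivity should follow from the fact (asserted in Theorem \ref{thm-varietycorrespondence}) that $\Z$, restricted appropriately, is the coordinate-ring incarnation of this map, together with the observation that a binary dihedral character determines its augmentation because the cord-ring relations evaluated on such a representation recover $\ve$ uniquely. Surjectivity onto irreducible metabelian characters should come from running the correspondence backwards: given an irreducible metabelian $\chi$, one builds a cord-ring homomorphism by evaluating cords on $\rho$, checks it lands in $\bb C$ (i.e. is an augmentation) and that its rank is exactly two because $\rho$ is non-abelian but its image is in $N(\bb C^\ast)$. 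For rank three: a trace-free non-metabelian character $\chi$ of $E_K$, the induced representation on $\dc K$ is irreducible, and the $2$-to-$1$ feature arises from the deck involution $\tau$ — the two augmentations mapping to $\chi$ are exchanged by the symmetry of the cord ring that corresponds, on the character-variety side, to the action of $\tau$ (equivalently, to the choice of which of the two square roots / which lift one takes). I would make this precise by exhibiting the free involution on the set of rank-three augmentations over a fixed $\chi$ and showing its orbits are exactly the fibers of $\ve\mapsto\chi_\ve$.

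Concretely the steps, in order: (1) State precisely the restriction of Theorem \ref{thm-varietycorrespondence} to fixed-rank strata, recording that rank one $\leftrightarrow$ abelian, rank two $\leftrightarrow$ binary dihedral non-abelian, rank three $\leftrightarrow$ irreducible non-metabelian, all with trace-free meridian. (2) Show the rank-two map is well-defined into irreducible metabelian characters and is a bijection, using the classical parametrization of the latter to pin down both injectivity and surjectivity. (3) Identify the involution on rank-three augmentations (coming from $\tau$, or from the sign ambiguity in the cord-ring-to-representation recipe), prove it is free on this stratum, and prove its quotient injects into trace-free non-metabelian characters. (4) Prove surjectivity of the rank-three map onto all trace-free non-metabelian characters of $E_K$ by the backward construction (evaluate cords on a lift to $\dc K$, verify augmentation axioms and that the rank equals three). (5) Assemble (2)–(4) into the stated bijective/$2$-to-$1$ correspondences.

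The main obstacle I anticipate is Step (3)–(4): controlling the fiber of $\ve \mapsto \chi_\ve$ over a non-metabelian trace-free character. It is easy to believe the fiber is nonempty and that $\tau$ acts on it, but proving the fiber has size \emph{exactly} two — neither larger (some rigidity: the augmentation is determined by $\chi$ together with one discrete choice) nor collapsing to one (the involution is genuinely free, i.e. $\chi$ admits no augmentation fixed by the symmetry) — is the delicate point. This likely requires a careful analysis of how a trace-free representation of $\pi_1(E_K)$ lifts to (a representation detected by cords on) $\pi_1(\dc K)$, and a dimension/rank computation showing that fixed points of the involution would force the rank to drop to two, contradicting non-metabelianness. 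I would expect to need the precise definition of rank and its behaviour under the branched-cover construction from Section \ref{SecBGC_K} here, more than anywhere else in the argument.
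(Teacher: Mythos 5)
There is a genuine gap, and it sits exactly where you predicted the delicate point would be: your rank-three step has the $2$-to-$1$ correspondence running in the wrong direction. An augmentation records only the two-index traces $\ve_{ij}=\tr(\rho(m_im_j^{-1}))$, so the augmentation attached to a trace-free character $\chi$ is \emph{unique}: the fiber of $\chi\mapsto\ve$ over a character is a single point, and there is no ``free involution on the set of rank-three augmentations over a fixed $\chi$'' to exhibit --- that set is a singleton. The duplicity lives on the character side: over a fixed rank-three augmentation there are two characters, differing by the sign of the three-index coordinates $x_{abc}$ (the square-root choice in $\X$), and these two are exchanged by the involution $\iota$ of \cite{NY}, $\iota(\rho)(g)=(-1)^{\text{lk}(g)}\rho(g)$ on $X_{TF}(E_K)$ --- not by the deck involution $\tau$ of $\dc K$, which plays a different role (it cuts out $X^\tau(\dc K)$, the image of $\Y$). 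So step (3) as written would be an attempt to prove a false statement, and the fixed-point analysis you sketch (``fixed points of the involution would force the rank to drop to two'') is aimed at the wrong object; the correct version of that remark is on the character side: $\X(\ve)$ is $\iota$-fixed if and only if $\text{rank}(\ve)<3$, because $\iota$ preserves every $x_{ab}$ and negates every $x_{abc}$, and the $x_{abc}$-coordinates of $\X(\ve)$ vanish exactly when the rank is below three.

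This observation is in fact the engine of the paper's proof, together with two inputs you replace by vaguer substitutes: Proposition \ref{prop-NYfixedpts}, which says the $\iota$-fixed points of $X_{TF}(E_K)$ are precisely the irreducible metabelian characters together with the one abelian trace-free character (this is what makes ``rank two $\Rightarrow$ irreducible metabelian'' precise, rather than an argument about images landing in the binary dihedral normalizer), and Theorem \ref{thm-NYInvolution}, which says the fibers of $\Y$ are exactly the $\iota$-orbits. Combining these with Theorem \ref{thm-varietycorrespondence} ($\Y\circ\X$ inverse to $\Z^\tau$, hence $\Z^\tau$ injective and $\X$ surjective onto $\iota$-orbits), one gets immediately: rank $1$ goes to the abelian character, rank $2$ augmentations biject with irreducible metabelian characters ($\iota$-fixed, fiber of $\Y$ a single point), and $\Z^\tau\circ\Y$ is $2$-to-$1$ from trace-free non-metabelian characters onto rank-three augmentations (fiber of $\Y$ an $\iota$-orbit of size two). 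Your steps (2) and (4) gesture at workable ideas (surjectivity by evaluating cords on a representation), but without the $\iota$-fixed-point characterization and the fiber description of $\Y$, the counting in both the rank-two and rank-three cases is not actually established, and the rank-three count as you set it up cannot be.
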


The map giving the isomorphism of Theorem \ref{thm-allequivariant-isomorphism} is not generally an isomorphism. In particular, it fails to be if there exist reflective augmentations of rank larger than four (Proposition \ref{prop-NontrivialKernel}). 

%Here we require a \textit{non-degeneracy} condition on the augmentation. To our knowledge all reflective augmentations might be non-degenerate.

%\begin{thm}If $K$ has a non-degenerate reflective augmentation of rank at least five, then $(\hc(K)\otimes\bb C)/\sqrt0$ is not isomorphic to $\bb C[X(\dc K)]$.
%\label{thm-highrank}
%\end{thm}

We also provide applications in two directions. First we describe an explicit $\SL_2\bb C$ representation of $\pi_1(E_K)$ from a rank three augmentation (Section \ref{sec-unitary}), wherein augmentation values appear as traces. This gives conditions on the augmentation necessary for the representation to be conjugate into $SU(2)$. We find that they are also sufficient. %(cf. Goldman)

\begin{thm}[see Theorem \ref{thm-unitary2}] Suppose that $K$ has an $n$-crossing diagram and let $\ve$ be a reflective augmentation of $K$ of rank three. Then a trace-free non-binary-dihedral $SU(2)$ character corresponds to $\ve$ if and only if an explicit set of $\binom n2 + \binom n3$ values of $\ve$ are in the interval $[-2,2]\in\bb R$.
\label{thm-unitary}
\end{thm}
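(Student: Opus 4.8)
The plan is to leverage the explicit $\SL_2\bb C$ representation constructed from a rank three augmentation in the preceding section, and to convert the condition ``conjugate into $SU(2)$'' into a finite, checkable list of trace conditions. First I would recall from Theorem~\ref{thm-varietycorrespondence} and Corollary~\ref{cor-IrrMetabelian} that a rank three reflective augmentation $\ve$ gives rise (via the stated $2$-$1$ correspondence) to a trace-free non-metabelian character of $\pi_1(E_K)$, and that this character has an explicit representative $\rho_\ve$ built from the $n$-crossing diagram, in which the value of $\ve$ on a ``cord'' is realized as a trace of a product of the meridian generators. Since $\rho_\ve$ is irreducible (non-metabelian $\Rightarrow$ irreducible) and meridian-traceless, its conjugacy class is determined by traces, and the question of whether $\rho_\ve$ is conjugate into $SU(2)$ is equivalent to a statement about those traces being real and lying in $[-2,2]$. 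The non-binary-dihedral hypothesis is what guarantees the representation is not a degenerate (reducible-upon-restriction) case, so that the trace criterion is clean.

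The key steps, in order, would be: (1) invoke the classical fact that an irreducible $\SL_2\bb C$ representation $\rho$ of a finitely generated group is conjugate into $SU(2)$ if and only if all of its character values lie in $[-2,2]\subset\bb R$ (equivalently, the character is real and the associated Hermitian form is definite); (2) reduce the infinite set of trace conditions to a finite one — here I would use the fact that $\pi_1(E_K)$ is generated by the $n$ meridians $x_1,\dots,x_n$ of the diagram, so the character is determined by $\tr\rho(x_{i_1}\cdots x_{i_k})$, and because each $\rho(x_i)$ is traceless (so $\rho(x_i)^2=-I$) only products of distinct generators up to the relevant length matter; (3) identify exactly which $\binom n2+\binom n3$ products suffice — these should be the pairwise products $x_ix_j$ and the triple products $x_ix_jx_k$, matching the cord-algebra generators appearing in the construction of $\rho_\ve$, so that their traces are precisely (up to sign and normalization) the augmentation values $\ve(\text{cord})$; (4) check that reality and the $[-2,2]$ bound on just these generating traces propagates to all words, i.e. that the ``definiteness'' of the invariant Hermitian form is detected on the generating set; (5) handle the converse direction — given the values lie in $[-2,2]$, reconstruct an $SU(2)$-conjugate explicitly, using the diagrammatic formula for $\rho_\ve$ with the now-real parameters.

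The main obstacle I expect is step (4) together with step (3): showing that checking finitely many traces (and specifically \emph{these} $\binom n2+\binom n3$ of them) is genuinely sufficient rather than merely necessary. The subtlety is that $SU(2)$-ness of an irreducible representation is a statement about the signature of a Hermitian form on the whole representation, and a priori one might need to control longer words; the argument must exploit the special structure of $\rho_\ve$ — that it is generated by traceless matrices satisfying the crossing (Wirtinger-type) relations — to bootstrap from the length-$\le 3$ traces. Concretely, one would want to show that $\tr\rho_\ve(x_ix_jx_kx_\ell)$ and all higher traces are polynomials with real coefficients in the length-$\le 3$ traces (a Cayley–Hamilton / trace-identity computation, using $\rho(x_i)^2=-I$), and then that positivity of the relevant Gram-type determinants for a spanning set of words follows from the length-$\le 2,3$ conditions. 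The non-binary-dihedral assumption presumably rules out the locus where this Gram matrix degenerates in a way that would require extra conditions. A secondary, more bookkeeping-level obstacle is making the count $\binom n2 + \binom n3$ exact: one must verify that no pair or triple is redundant and that no further word is needed, which is where the precise combinatorics of the $n$-crossing diagram and Ng's cord relations enter.
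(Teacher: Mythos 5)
Your setup (use the explicit representation built from the rank three augmentation, and reduce to the pairwise and triple meridian traces) matches the paper's framing, and the necessity direction is indeed immediate. But the crucial sufficiency direction is exactly where your sketch stops, and the mechanism you propose for it would not work as stated. Reality does propagate: every trace is an integer polynomial in the length $\le 3$ traces (the single traces vanish), so the character is real and, being irreducible, is conjugate into $\SU(2)$ or $\SL_2\bb R$. Boundedness, however, does not propagate: a polynomial in quantities lying in $[-2,2]$ need not lie in $[-2,2]$, so your step (4) {--} deducing that all traces are bounded, or that a Gram-type form is definite, from the length $\le 2,3$ conditions alone {--} is precisely the missing content, and you offer no argument beyond the expectation that it ``follows.'' Ruling out the $\SL_2\bb R$ alternative is the whole point, and it cannot be done by formal trace identities on the generating set.

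The paper's proof of Theorem \ref{thm-unitary2} proceeds quite differently: it explicitly unitarizes the representation of Section \ref{sec-makingReps}, conjugating $T, TA_2,\ldots,TA_n$ into $\SU(2)$ by a concrete matrix $P$. The two inputs that make this possible are (i) Lemma \ref{lem-alpha}, which shows the sign of $\alpha = 1-a(\ve_{13}-a)$ is determined by whether $\ve_{12}^2-4$ and $\ve(1,2,3)-2$ have the same sign {--} this is where ellipticity (as opposed to mere reality) enters, and it is exactly the dichotomy separating the $\SU(2)$ case from the $\SL_2\bb R$ case of Theorem \ref{thm-SL2R}; and (ii) the identity (\ref{eqn-magic}), which uses the vanishing of the $4\times 4$ minors $\det_l(\ve)$ {--} i.e.\ the rank three hypothesis {--} to force $c_l+b_l\alpha = 0$ and hence put each $P^{-1}TA_lP$ in $\SU(2)$. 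Your sketch attributes the needed non-degeneracy to the non-binary-dihedral hypothesis; in the paper non-binary-dihedral is equivalent to rank three (Corollary \ref{thm-IrrMetabelian}), and it is this minor-vanishing consequence of rank three, not an abstract Gram positivity over a spanning set of words, that carries the argument. Note also Remark \ref{rem-Realpoints}: the proof actually needs only that $\ve$ is real with $\ve_{ij}\in(-2,2)$ and $\ve(i,j,k)<2$ for a single triple, the remaining $\binom n2 + \binom n3$ bounds then holding automatically {--} further evidence that the correct logic is not ``bounds on generating traces propagate to all words.'' Finally, your classical fact in step (1) (irreducible with all traces real in $[-2,2]$ implies $\SU(2)$) is essentially right but itself needs care with parabolic traces $\pm 2$ and a purely-elliptic-implies-common-fixed-point argument; the paper avoids invoking it altogether.
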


Calling an augmentation \textit{elliptic} if it satisfies the above bounded condition, if no elliptic augmentation exists then $K$ has only binary-dihedral, trace-free representations into $SU(2)$. The singular instanton homology $I^\nl(K)$ for such knots is simple. Following \cite{Zentner}, call such a knot $\SU(2)$-simple.  We find all alternating knots with at most 10 crossings which are $\SU(2)$-simple. We also note that every non-alternating knot with crossing number at most 11 does have $\SU(2)$ representations which are not binary-dihedral, providing some computational evidence that $SU(2)$-simple knots are alternating (see \cite{Zentner}).

Our second application relates to the orderability of the fundamental group of $\dc K$. Given a rank three augmentation that is not elliptic we find a non-abelian $\SL_2\bb R$ representation of $\pi_1(\dc K)$. 

\begin{thm}[see Theorem \ref{thm-SL2R}] Suppose $\ve$ is a real-valued rank three reflective augmentation of $K$ that is not elliptic. Then there is a non-abelian representation $\rho\co\pi_1(\dc K)\to\text{SL}_2\bb R$.
\label{thm-sl2R-orderable}
\end{thm}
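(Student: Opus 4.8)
The plan is to produce the representation in four moves: realize $\ve$ by the explicit trace-free $\SL_2\bb C$ representation of $\pi_1(E_K)$ coming from the rank-three correspondence; use the classical real-form dichotomy (an irreducible $\SL_2\bb C$ representation with real character is conjugate into $\SU(2)$ or into $\SL_2\bb R$); exclude the $\SU(2)$ case using that $\ve$ is not elliptic together with Theorem \ref{thm-unitary2}; and finally descend the resulting $\SL_2\bb R$ representation to the branched double-cover. Throughout, fix an $n$-crossing diagram of $K$ and let $\mu$ denote a meridian.

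Since $\ve$ has rank three, the correspondence of Theorem \ref{thm-varietycorrespondence} (equivalently Corollary \ref{thm-IrrMetabelian}) attaches to $\ve$ a trace-free non-metabelian character $\chi\in X_{TF}(E_K)$, realized concretely by the explicit representation $\rho\co\pi_1(E_K)\to\SL_2\bb C$ of Section \ref{sec-unitary}, with $\tr\rho(\mu)=0$ and whose remaining trace coordinates are exactly the $\binom n2+\binom n3$ values of $\ve$ singled out in Theorem \ref{thm-unitary2}. As $\ve$ is real-valued these traces are all real, so $\chi$ is a real character; as $\chi$ is non-metabelian (so in particular not the character of a reducible representation), $\rho$ is irreducible. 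By the real-form dichotomy $\rho$ is therefore conjugate inside $\SL_2\bb C$ either into $\SU(2)$ or into $\SL_2\bb R$ (a real character forces $\rho$ to be conjugate to its complex conjugate by some $A$ with $\bar AA=\pm I$, the two signs yielding respectively the $\SL_2\bb R$ and the $\SU(2)$ case). If the former held, $\chi$ would be a trace-free $\SU(2)$ character, and, being non-metabelian, also non-binary-dihedral; Theorem \ref{thm-unitary2} would then force those $\binom n2+\binom n3$ values of $\ve$ into $[-2,2]$, i.e.\ $\ve$ would be elliptic, contrary to hypothesis. Hence $\rho$ is conjugate into $\SL_2\bb R$, and we replace $\rho$ by this conjugate.

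It remains to pass from the trace-free $\rho\co\pi_1(E_K)\to\SL_2\bb R$ to $\pi_1(\dc K)$. Since $\rho(\mu)^2=-I$, $\rho$ factors through a $\mathrm{PSL}_2\bb R$ representation of the orbifold group $\pi_1(E_K)/\langle\langle\mu^2\rangle\rangle$; restricting to the index-two subgroup $\pi_1(\dc K)$ and lifting back into $\SL_2\bb R$ is possible, and the lift is unique, because the obstruction lives in $H^2(\dc K;\bb Z/2)\cong H_1(\dc K;\bb Z/2)=0$ (the determinant $|\Delta_K(-1)|$ is odd). This is precisely the branched-cover construction underlying $\Z$ in Section \ref{sec-dcK}, now carried out over $\bb R$. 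The induced $\rho\co\pi_1(\dc K)\to\SL_2\bb R$ is non-abelian: otherwise the restriction of $\rho$ to the index-two subgroup $\pi_1(\widetilde{E_K})<\pi_1(E_K)$ would have image abelian modulo $\{\pm I\}$, which is exactly the condition that the trace-free representation of $\pi_1(E_K)$ be binary-dihedral, hence metabelian --- contradicting non-metabelianness. (Equivalently, under the correspondence of Section \ref{sec-dcK} the character of this representation of $\pi_1(\dc K)$ is the image of the non-metabelian $\chi$, and non-metabelian trace-free characters of $E_K$ correspond to non-abelian characters of $\dc K$.) This yields the required non-abelian $\rho\co\pi_1(\dc K)\to\SL_2\bb R$.

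I expect the last move to be the main obstacle: keeping the passage across the branched-cover correspondence honest so that reality \emph{and} non-abelianness survive simultaneously, and, in particular, verifying that the induced $\mathrm{PSL}_2\bb R$ representation of $\pi_1(\dc K)$ lifts into $\SL_2\bb R$ rather than only into $\SL_2\bb C$ --- the point where the odd order of $H_1(\dc K)$ is used. By contrast the dichotomy and the exclusion of $\SU(2)$ are essentially immediate once Theorems \ref{thm-varietycorrespondence} and \ref{thm-unitary2} are in hand; the only routine check is that the correspondence of Theorem \ref{thm-varietycorrespondence} sends the real point $\ve$ to a real point of $X_{TF}(E_K)$, which is clear since augmentation values are trace coordinates of the associated representation.
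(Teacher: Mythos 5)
Your argument breaks down at the reality step, and the failure is genuine rather than cosmetic. You apply the $\SU(2)$/$\SL_2\bb R$ dichotomy to the trace-free character $\chi=\X(\ve)$ of $\pi_1(E_K)$, asserting that its trace coordinates are ``exactly the $\binom n2+\binom n3$ values of $\ve$'' and hence real. They are not: in Nagasato's coordinates a point of $X_{TF}(E_K)$ consists of the $x_{ab}=\tr\wt\rho(m_am_b^{-1})=\ve_{ab}$ \emph{together with} the triple traces $x_{abc}$ (i.e.\ $\pm\tr\wt\rho(m_am_bm_c)$), and the latter are not augmentation values --- the $\binom n3$ quantities in the ellipticity condition are the commutator traces $\ve(a,b,c)$, which are polynomials in the $\ve_{rs}$, not the $x_{abc}$. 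By equation (\ref{H}) and Lemma \ref{lem-SqrootCorrespondence} one has $x_{abc}^2=\tfrac12T({\bf a},{\bf a})=2-\ve(a,b,c)$, so whenever some $\ve(i,j,k)>2$ --- one of the two ways $\ve$ can fail to be elliptic, and exactly the regime of the paper's Theorem \ref{thm-SL2R} and of Example \ref{example-3} --- the coordinate $x_{ijk}$ is purely imaginary and nonzero. Then $\chi_{\wt\rho}$ is not a real character, the real-form dichotomy does not apply to it, and in fact $\wt\rho$ is conjugate into neither $\SU(2)$ nor $\SL_2\bb R$; the trace-free real representation of $\pi_1(E_K)$ you propose to descend to $\dc K$ simply need not exist. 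The paper flags precisely this point when it notes that $\chi_\rho\in X^\tau(\dc K)$ is real-valued but ``this may not be the case for $\chi_{\wt\rho}$.'' (Your final ``routine check,'' that $\X$ sends a real $\ve$ to a real point of $X_{TF}(E_K)$, is exactly the false statement.)

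The repair is to run the dichotomy on $\pi_1(\dc K)$ rather than on $\pi_1(E_K)$, which is what the paper does: by Lemma \ref{lem-CharDetermination} every coordinate of $\chi_\rho=\Y\circ\X(\ve)\in X^\tau(\dc K)$ is a real-coefficient polynomial in the $\ve_{ij}$ alone (the three-index trace functions of $E_K$ enter the expansion multiplied by zero because $\wt\rho$ is trace-free), so $\chi_\rho$ is real; it is non-abelian since $\text{rank}(\ve)=3$, hence irreducible, so by \cite[Proposition III.1.1]{MS} it is either an $\SU(2)$ or an $\SL_2\bb R$ character; and the $\SU(2)$ alternative would force all $\rho$-traces, in particular the $\ve_{rs}=\bar t_{rs}(\chi_\rho)$ and the commutator values $\ve(r,s,t)$, into $[-2,2]$, i.e.\ $\ve$ elliptic, contrary to hypothesis. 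Once the argument lives on $\dc K$ there is nothing to lift, so your $H^2(\dc K;\bb Z/2)$ discussion is not needed; the paper's Theorem \ref{thm-SL2R} then sharpens the conclusion by exhibiting the representation $g_r\mapsto A_r$ with real entries when some $|\ve_{ij}|>2$ and $\ve(i,j,k)>2$. Your remaining steps (exclusion of $\SU(2)$ via Theorem \ref{thm-unitary2}, and non-abelianness from non-metabelianness) are sound, but they sit on top of the unjustified --- and in general false --- reality claim for the $E_K$ character.
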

Such representations can be used to study left-invariant orders on $\pi_1(\dc K)$, provided their Euler class vanishes. We discuss some examples in Section \ref{sec-unitary} and a way to generate families of homology spheres with left orderable fundamental group (see Corollaries \ref{cor-det1covers} and \ref{cor-braidsatellites}).

The paper is organized as follows. In Sections \ref{SecBGCharVar}, \ref{SecBGC_K}, and \ref{sec-dcK} we review background on ($\SL_2\bb C$) character varieties, the abelian cord ring, and results concerning $X(\dc K)$ and $X_{TF}(E_K)$ respectively. We also prove a few needed results in these sections, Proposition \ref{prop-augtotrace} in Section \ref{sec-HCaugmentations} being of particular importance. In Section \ref{sec-results} we prove Theorem \ref{thm-allequivariant-isomorphism} and Theorem \ref{cor-IrrMetabelian}. Section \ref{sec-unitary} contains our two applications, Theorems \ref{thm-unitary} and \ref{thm-sl2R-orderable} and some examples. We include an appendix tabulating numbers of reflective augmentations of small knots.

\subsection*{Acknowledgments}The author is grateful for support from a CIRGET postdoc and from the Institut Mittag-Leffler. He would like to thank Steven Boyer for discussing with him the relevance of the results to left-invariant orderings. He would also like to thank Anh Tran for directing him to Nagasato's results and Lenny Ng for comments on an earlier draft. 

\section{Character varieties}
\label{SecBGCharVar}
Throughout the paper we write $\left[ A_{ij} \right]$ to denote a matrix which has $A_{ij}$ in row $i$ and column $j$. When needed we will also use subscripts, e.g.\ $\left[ A_{ij}\right]_{1\le i,j\le m}$, to indicate the size of the matrix. The determinant will be written either $\left| A_{ij}\right|$ or $\det\left[ A_{ij}\right]$.

Let $G$ be a finitely generated group. Upon choosing a generating set $\{g_1,\ldots,g_r\}$ of $G$ a representation $\rho\in\text{Hom}(G,\SL_2\bb C)$ is determined by $(\rho(g_1),\ldots,\rho(g_r))\in \bb C^{4r}$. Define the representation space $R(G) = \{(\rho(g_1),\ldots,\rho(g_r)) | \rho\in\text{Hom}(G,\SL_2\bb C)\}$, which is an algebraic set, well-defined up to canonical isomorphism. Write $X(G)$ for the set of associated characters $\chi_\rho\co G\to\bb C$, defined by $\chi_\rho(g) = \tr(\rho(g))$ (where $\tr$ is the trace and $\rho\in\text{Hom}(G,\SL_2\bb C)$). If $M$ is a compact connected 3-manifold then we write $X(M)$ for $X(\pi_1(M))$. As is common, the character of a representation whose image has some property will be described by that property, e.g.\ $\chi_\rho$ is called metabelian if $\rho(G)$ is metabelian. The set $X(G)$ is known to be algebraic \cite{CS, GM}. Given $g\in G$, there is a regular function $t_g\co X(G) \to \bb C$ defined by $t_g(\chi_\rho) = \tr(\rho(g))$ and $\{t_g, g\in G\}$ generates the ring $\bb C[X(G)]$ of regular functions on $X(G)$. Also recall that if $\rho,\rho'\in R(G)$ are such that their characters agree and $\rho$ is irreducible, then $\rho$ and $\rho'$ are conjugate \cite{CS}.
%% (in \cite{Procesi}, this is shown for characters of representations by larger matrices as well). 

\subsection{Characters of a free group}
Let $F_r=\langle f_1,\ldots,f_r\rangle$ be the rank $r$ free group and $\mathfrak g\co F_r\to G$ a surjective homomorphism, with $g_i = \mathfrak g(f_i)$ for $1\le i\le r$. Then every element of $\bb C[X(G)]$ is a rational function in $\{t_{g_i} | 1\le i\le r\}$, $\{t_{g_ig_j} | 1\le i<j\le r\}$, and $\{t_{g_ig_jg_k} | 1\le i<j<k\le r\}$. In fact, there is some choice of $\mathfrak g$ so that $\bb C[X(G)]$ is generated by rational functions in $\{t_{g_i} | 1\le i\le r\}$, $\{t_{g_ig_j} | 1\le i<j\le r, i\le 3\}$, and $t_{g_1g_2g_3}$ \cite{V, M}.

Some lemmas used in \cite{M} to obtain this last result will be important to us. First we recall the ``Fricke relation'' (originally known in \cite{V}) on characters in $X(F_r)$. Let $X_1,X_2,X_3\in\SL_2\bb C$ be arbitrary. Define $x_i = \tr X_i$, $y_{ij} = \tr(X_iX_j)$, and $z_{ijk} = \tr(X_iX_jX_k)$ for $i,j,k\in\{1,2,3\}$. Note that $y_{ii} = x_i^2 - 2$ for $i=1,2,3$. Denoting the commutator of $X_i,X_j$ by $[X_i,X_j]$ as usual, note also that $\tr([X_i,X_j]) = x_i^2+x_j^2+y_{ij}^2-x_ix_jy_{ij}-2$. Define 
		\al{
		P 	&= x_1y_{23} + x_2y_{31} + x_3y_{12} - x_1x_2x_3 \\
		\intertext{and}
		Q 	&= x_1^2+x_2^2+x_3^2 + y_{12}^2 + y_{31}^2 + y_{23}^2 + y_{12}y_{31}y_{23} - x_1x_2y_{12} - x_3x_1y_{31} - x_2x_3y_{23} - 4.
		}
Then $z_{123}$ and $z_{132}$ are the roots of $z^2 + Pz + Q = 0$. The \textit{Fricke discriminant} is denoted 
		\begin{equation}
		\Delta(X_1,X_2,X_3) = P^2 - 4Q.
		\label{def-Frickediscriminant}
		\end{equation}

\begin{lem}[\cite{M}] For any choice of $X_1,X_2,X_3\in\SL_2\bb C$, $\Delta(X_1,X_2,X_3) = 0$ if and only if there exists $T\in\SL_2\bb C$ with $\text{\emph{tr}}(T) = 0$ such that $(TX_i)^2 = -\text{Id}$ for $i=1,2,3$, unless $\text{\emph{tr}}([X_i,X_j]) = 2$ for all $i,j\in\{1,2,3\}$.
\label{lem-MagnusTraceless}
\end{lem}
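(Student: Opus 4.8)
The plan is to recast the condition on $T$ as a statement of linear algebra in $M_2\bb C$ with the trace pairing, and then to recognize the commutator matrix $X_2X_3 - X_3X_2$ as the essentially unique candidate for $T$, up to rescaling. First I would note that for any $A\in\SL_2\bb C$ the Cayley--Hamilton identity reads $A^2 - (\tr A)A + \text{Id} = 0$, so $A^2 = -\text{Id}$ if and only if $(\tr A)A = 0$, i.e.\ (as $A\ne0$) if and only if $\tr A = 0$. Applying this with $A = TX_i$, the requirement ``$(TX_i)^2 = -\text{Id}$ for $i=1,2,3$'' is equivalent to ``$\tr(TX_i) = 0$ for $i=1,2,3$''. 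Now equip $M_2\bb C\cong\bb C^4$ with the nondegenerate symmetric bilinear form $\langle A,B\rangle = \tr(AB)$ and put $V = \operatorname{span}\{\text{Id},X_1,X_2,X_3\}$. Every element of the orthogonal complement $V^\perp$ is automatically trace-free (being orthogonal to $\text{Id}$), and any invertible $2\times2$ matrix can be rescaled to have determinant $1$; hence the desired $T$ exists precisely when $V^\perp$ contains an invertible matrix.

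The key facts concern the trace-free matrix $N := X_2X_3 - X_3X_2$. By cyclicity of the trace, $\langle N,\text{Id}\rangle = \langle N,X_2\rangle = \langle N,X_3\rangle = 0$. Next, $\tr(X_1N) = \tr(X_1X_2X_3) - \tr(X_1X_3X_2) = z_{123} - z_{132}$, so, since $z_{123}$ and $z_{132}$ are the roots of $z^2+Pz+Q$, we obtain $\Delta(X_1,X_2,X_3) = P^2 - 4Q = (z_{123}-z_{132})^2 = \tr(X_1N)^2$. Finally, a short Cayley--Hamilton computation for $X_2$ and $X_3$, together with the formula for $\tr([X_2,X_3])$ recalled above, gives $\det N = -\tfrac{1}{2}\tr(N^2) = 2 - \tr([X_2,X_3])$. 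I will also use that $\Delta$ is invariant under permutations of $X_1,X_2,X_3$, since a transposition merely interchanges $z_{123}$ and $z_{132}$; thus the analogues of the identities above hold after any relabeling.

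For the implication ``$T$ exists $\Rightarrow\Delta=0$'' no hypothesis on commutators is needed: such a $T$ is a nonzero element of $V^\perp$, so $V$ is proper, and writing $W = \operatorname{span}\{\text{Id},X_2,X_3\}\subseteq V$ we have $T,N\in W^\perp$. If $\dim W\le2$ then $X_2,X_3$ commute, so $N = 0$ and $\Delta = 0$; if $\dim W = 3$ then $W^\perp$ is a line containing $N$ and the nonzero $T$, so $N = \lambda T$ for some scalar $\lambda$, whence $\tr(X_1N) = \lambda\tr(X_1T) = 0$ and again $\Delta = 0$. For the converse, assume $\Delta = 0$ and that we are not in the excluded case, so $\tr([X_i,X_j])\ne2$ for some pair; by the permutation symmetry we may relabel so that $\tr([X_2,X_3])\ne2$. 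Then $\det N = 2 - \tr([X_2,X_3])\ne0$, so $N$ is invertible, and $\tr(X_1N)^2 = \Delta = 0$ forces $\langle N,X_1\rangle = 0$; hence $N\in V^\perp$. Rescaling $N$ to have determinant $1$ and applying the first step produces $T$ with the required properties.

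The main obstacle is the forward direction, and precisely the observation that the natural candidate for $T$ is a rescaling of $N = X_2X_3 - X_3X_2$, whose determinant is exactly $2 - \tr([X_2,X_3])$ — this is what forces the commutator condition into the statement. When $\tr([X_i,X_j]) = 2$ for all pairs the candidate $N$ is nilpotent or zero and the construction fails; the exception is genuinely necessary, since (for instance) if $X_1,X_2,X_3$ are simultaneously triangulable and span a three-dimensional subspace of $M_2\bb C$ then $\Delta = 0$ whereas $V^\perp$ is spanned by a nilpotent matrix, so no invertible $T$ exists.
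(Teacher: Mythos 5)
Your proof is correct, and it takes a genuinely different route from the one the paper relies on. The paper does not reprove the lemma: it cites \cite{M}, and the method of Magnus it later exploits (see Remark \ref{rem-Tappliesmore}) is a coordinate argument: after reducing $(TX_i)^2=-\text{Id}$ to the linear conditions $\tr(TX_i)=0$ exactly as you do, one solves a $3\times 3$ homogeneous linear system in the entries $(t_{11},t_{21},t_{12})$ of a trace-free $T$, the square of whose determinant is $\Delta(X_1,X_2,X_3)$, and one checks invertibility of the resulting $T$ using that some $\tr([X_i,X_j])\ne 2$. You instead work invariantly with the trace pairing on $M_2\bb C$ and exhibit the solution explicitly as a rescaling of $N=X_2X_3-X_3X_2$, using the identities $\Delta=\tr(X_1N)^2$ (immediate from $z_{123},z_{132}$ being the roots of $z^2+Pz+Q$) and $\det N=2-\tr([X_2,X_3])$; I checked both identities and the two directions of your argument, including the dimension count for $W^\perp$, and they are sound. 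What your approach buys is transparency: the candidate $T$ is canonical, the discriminant is visibly a square of a trace, and the exceptional clause is exactly the statement that $N$ fails to be invertible (your triangular example correctly shows the exception is needed). What the coordinate approach buys is the explicit $(t_{11},t_{21},t_{12})$-system that the paper reuses in Remark \ref{rem-Tappliesmore} to conclude $(TX_j)^2=-\text{Id}$ for additional matrices $X_j$ with $\Delta(X_1,X_2,X_j)=0$; with your normalization (where $N$ is built from $X_2,X_3$ and the nondegenerate commutator is $[X_2,X_3]$) the analogous extension would come from $\tr(NX_j)=0$, i.e.\ from vanishing of $\Delta(X_j,X_2,X_3)$, so a small relabeling argument would be needed to recover that remark verbatim, but none of this affects the lemma itself.
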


From the proof of Lemma \ref{lem-MagnusTraceless} in \cite{M} the determinant of $\left[\tr(X_iX_j^{-1})\right]_{1\le i,j\le 4}$, for $X_4=\text{Id}$, is zero if and only if $\D(X_1,X_2,X_3)=0$.

%\begin{lem}[Main Lemma, \cite{M}] Let $X_i, i=1,2,3,4$ and $Y_j, j=1,2,3,4$ be any eight matrices in $\SL_2\bb C$. Then
%	\al{
%		\det\left[\text{\emph{tr}}(X_iY_j)\right] + \det\left[\text{\emph{tr}}(X_iY_j^{-1})\right] 	&= 0 \\
%		\det\left[\text{\emph{tr}}(X_iX_j)\right]\det\left[\text{\emph{tr}}(Y_iY_j))\right]		&= \det\left[\text{\emph{tr}}(X_iY_j)\right]^2.
%	}
%\label{MagnusMainLemma}
%\end{lem}

Another proposition from which we will derive some useful corollaries is shown in \cite{GM}.

\begin{lem}[Proposition 4.8, \cite{GM}] Let $X_i, i=1,2,3,4$ and $Y_j, j=1,2,3,4$ be in $\SL_2\bb C$. Then
		\[\det\left[\tr(X_iY_j) - \tr(X_iY_j^{-1})\right] = 0.\]
\label{lem-GMTraceMatrix}
\end{lem}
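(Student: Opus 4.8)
The plan is to exploit the identity $\tr(AB) - \tr(AB^{-1}) = \tr(A)\tr(B) - 2\tr(AB^{-1})$... no — better: the key algebraic fact is that for $B \in \SL_2\bb C$ one has $B + B^{-1} = \tr(B)\,\mathrm{Id}$ (Cayley–Hamilton), and hence $B - B^{-1} = B - (\tr(B)\,\mathrm{Id} - B) = 2B - \tr(B)\,\mathrm{Id}$. Multiplying by $X_i$ and taking traces gives $\tr(X_iY_j) - \tr(X_iY_j^{-1}) = 2\tr(X_iY_j) - \tr(X_i)\tr(Y_j)$. So the $(i,j)$ entry of our matrix is $2\tr(X_iY_j) - \tr(X_i)\tr(Y_j)$. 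The first step, then, is to recognize $\tr$ as a bilinear form on the $4$-dimensional space $M_2(\bb C)$: writing $\langle A, C\rangle = \tr(AC)$, this is a nondegenerate symmetric bilinear form, and our matrix is $\left[\,2\langle X_i, Y_j\rangle - \langle X_i, \mathrm{Id}\rangle\langle \mathrm{Id}, Y_j\rangle\,\right]$.

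Next I would express this as a product of matrices representing linear maps on $M_2(\bb C)$. Fix a basis $e_1, e_2, e_3, e_4$ of $M_2(\bb C)$. Let $U$ be the $4\times 4$ matrix whose $i$-th row is the coordinate vector of $X_i$ in this basis, and $V$ the matrix whose $j$-th column is the coordinate vector of $Y_j$. Let $\Phi$ be the Gram matrix of $\langle\cdot,\cdot\rangle$ in this basis, and let $w$ be the coordinate vector of $\mathrm{Id}$, so that $\langle X_i, \mathrm{Id}\rangle$ is the $i$-th entry of $U\Phi w$ and $\langle \mathrm{Id}, Y_j\rangle$ the $j$-th entry of $w^{T}\Phi V$. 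Then the matrix in the statement equals
\[
U\bigl(2\Phi - \Phi w\, w^{T}\Phi\bigr)V.
\]
Its determinant is $\det(U)\det(V)\det(2\Phi - \Phi w w^{T}\Phi)$, so it suffices to show the middle factor vanishes, i.e.\ that the quadratic form $Q(A,C) = 2\tr(AC) - \tr(A)\tr(C)$ on $M_2(\bb C)$ is degenerate. (When $\det U$ or $\det V$ is zero we are already done; note the factorization itself shows the determinant vanishes regardless.)

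The remaining step is to exhibit a nonzero $A_0 \in M_2(\bb C)$ with $2\tr(A_0 C) = \tr(A_0)\tr(C)$ for all $C$, equivalently $2A_0 = \tr(A_0)\,\mathrm{Id}$ as a linear functional via the nondegenerate pairing $\langle\cdot,\cdot\rangle$ — i.e.\ $2A_0 - \tr(A_0)\mathrm{Id} = 0$. Taking $A_0 = \mathrm{Id}$ does not work ($2\,\mathrm{Id} - 2\,\mathrm{Id}$... wait, $\tr(\mathrm{Id}) = 2$, so $2\,\mathrm{Id} - 2\,\mathrm{Id} = 0$ — it does work!). Indeed with $A_0 = \mathrm{Id}$: $Q(\mathrm{Id}, C) = 2\tr(C) - 2\tr(C) = 0$ for every $C$, so $\mathrm{Id}$ is in the radical of $Q$ and $\det(2\Phi - \Phi w w^T\Phi) = 0$. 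This completes the argument. The only point requiring a little care — and the one I'd call the mild obstacle — is the bookkeeping of the $4\times 4$ matrix factorization and confirming that a degeneracy of the $4$-dimensional form genuinely forces the $4\times 4$ determinant to vanish independent of the positions of the $X_i$ and $Y_j$; once the identity $\tr(B) \,\mathrm{Id} = B + B^{-1}$ is in hand, everything else is linear algebra.
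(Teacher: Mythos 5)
Your proof is correct. The paper does not prove this lemma itself (it simply cites \cite{GM}, Proposition 4.8), and your argument is essentially the standard one underlying that citation: using $Y+Y^{-1}=\tr(Y)\,\mathrm{Id}$ to rewrite each entry as $2\tr(X_iY_j)-\tr(X_i)\tr(Y_j)$, i.e.\ as the value of a bilinear form on the four-dimensional space $M_2(\bb C)$ whose radical contains $\mathrm{Id}$ (equivalently, $2\tr(XY)-\tr(X)\tr(Y)=2\tr(X_0Y_0)$ pairs only the trace-free parts, which live in a three-dimensional space), so the $4\times4$ matrix factors through a singular Gram matrix and has determinant zero.
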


\begin{cor}For $m\ge 5$, let $X_i, i=1,\ldots,m$ be matrices in $\SL_2\bb C$. Then the $m\times m$ trace matrix $\left[\tr(X_iX_j^{-1})\right]_{1\le i,j\le m}$ has determinant zero.
\label{cor-tracematrixisnotrank5}
\end{cor}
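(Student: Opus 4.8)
The plan is to show that the $m \times m$ matrix $A = \left[\tr(X_iX_j^{-1})\right]$ has every $5\times 5$ minor equal to zero, which forces $\det A = 0$ when $m \ge 5$ (indeed it shows $\operatorname{rank} A \le 4$). Since an arbitrary $5\times 5$ submatrix of $A$ is obtained by selecting rows indexed by $i \in \{i_1,\dots,i_5\}$ and columns indexed by $j \in \{j_1,\dots,j_5\}$, it has the form $\left[\tr(X_{i_a} X_{j_b}^{-1})\right]_{1\le a,b\le 5}$. So it suffices to prove: for \emph{any} ten matrices $X_1,\dots,X_5, Y_1,\dots,Y_5 \in \SL_2\bb C$, the $5\times 5$ matrix $\left[\tr(X_aY_b^{-1})\right]$ is singular. (For the diagonal submatrix in the original statement one takes $Y_b = X_{j_b}$, but proving the more symmetric statement is no harder and is cleaner.)

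The key input is Lemma \ref{lem-GMTraceMatrix}. First I would reconcile the two trace expressions: for any $X, Y \in \SL_2\bb C$ one has $\tr(XY^{-1}) = \tr(X)\tr(Y) - \tr(XY)$ by the standard $\SL_2$ trace identity, and similarly $\tr(XY) + \tr(XY^{-1}) = \tr(X)\tr(Y)$ can be rearranged to $\tr(XY^{-1}) = \tr(X)\tr(Y) - \tr(XY)$; in particular $\tr(XY) - \tr(XY^{-1}) = 2\tr(XY) - \tr(X)\tr(Y)$, but more usefully $\tr(XY^{-1})$ and $\tr(XY) - \tr(XY^{-1})$ differ by the rank-one correction term $\tr(X)\tr(Y)$. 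Concretely,
\[
\tr(X_aY_b^{-1}) \;=\; -\bigl(\tr(X_aY_b) - \tr(X_aY_b^{-1})\bigr) \;+\; \tr(X_a)\tr(Y_b^{-1}),
\]
using $\tr(Y_b^{-1}) = \tr(Y_b)$. Hence the matrix $\left[\tr(X_aY_b^{-1})\right]_{1\le a,b\le 5}$ equals $-B + C$, where $B = \left[\tr(X_aY_b) - \tr(X_aY_b^{-1})\right]$ and $C = \left[\tr(X_a)\tr(Y_b^{-1})\right]$ is a rank-one matrix (an outer product of the column vector $(\tr X_a)_a$ with the row vector $(\tr Y_b^{-1})_b$).

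Now I would extract the rank bound. Lemma \ref{lem-GMTraceMatrix}, applied to every $4\times 4$ submatrix of $B$ (again selecting any four of the $X_a$'s and any four of the $Y_b$'s — the lemma's hypothesis only requires elements of $\SL_2\bb C$, with no relation between the $X$'s and $Y$'s), shows that every $4\times 4$ minor of $B$ vanishes, so $\operatorname{rank} B \le 3$. Adding the rank-one matrix $C$ raises the rank by at most one, so $\operatorname{rank}(-B + C) \le 4$, and therefore the $5\times 5$ matrix $\left[\tr(X_aY_b^{-1})\right]$ is singular; specializing $Y_b = X_{j_b}$ and $X_a = X_{i_a}$ shows every $5\times 5$ submatrix of the original trace matrix has determinant zero, so for $m \ge 5$ the matrix $\left[\tr(X_iX_j^{-1})\right]_{1\le i,j\le m}$ has rank at most $4$ and in particular determinant zero. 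The only point requiring any care — the ``main obstacle,'' though it is minor — is making sure the decomposition $\tr(XY^{-1}) = -(\tr(XY)-\tr(XY^{-1})) + \tr(X)\tr(Y)$ is set up so that the correction term genuinely factors as a rank-one matrix across the index ranges $a$ and $b$ independently; once that is observed, the conclusion is immediate from Lemma \ref{lem-GMTraceMatrix} plus the elementary fact that adding a rank-one matrix changes the rank by at most one.
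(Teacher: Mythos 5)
Your argument is essentially the paper's: both rest on Lemma \ref{lem-GMTraceMatrix} together with the observation that $\left[\tr(X_iX_j^{-1})\right]$ differs from the rank-$\le 3$ matrix $\left[\tr(X_iX_j)-\tr(X_iX_j^{-1})\right]$ (up to scalar) by a rank-one outer product of traces. The paper packages the rank-one correction by normalizing $X_1=\text{Id}$ (via $X_i\mapsto X_iX_1^{-1}$) and performing one block row-reduction, whereas you keep it as an explicit rank-one summand without normalizing; this is a cosmetic difference, and your version gives the marginally more symmetric statement that any matrix $\left[\tr(X_aY_b^{-1})\right]$ has rank at most $4$ (which the paper in effect also uses later). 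One correction: your displayed identity is off by a factor of two. From $\tr(XY)+\tr(XY^{-1})=\tr(X)\tr(Y)$ the correct decomposition is $\tr(X_aY_b^{-1}) = -\tfrac12\bigl(\tr(X_aY_b)-\tr(X_aY_b^{-1})\bigr) + \tfrac12\tr(X_a)\tr(Y_b)$, i.e.\ $A=-\tfrac12 B+\tfrac12 C$ rather than $-B+C$; as written, your right-hand side equals $2\tr(X_aY_b^{-1})$. Since rescaling $B$ and $C$ does not change their ranks, the conclusion $\operatorname{rank}(A)\le 3+1=4$, hence $\det=0$ for $m\ge 5$, is unaffected (and note the decomposition applies directly to the full $m\times m$ matrix, so the detour through $5\times 5$ minors is not needed).
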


\begin{proof}First assume that $X_1$ is the identity. Recall the basic trace identities $\tr(A) = \tr(A^{-1})$ and $\tr(AB)+\tr(AB^{-1}) = \tr(A)\tr(B)$ which hold for any $A,B\in\SL_2\bb C$. In particular we have $\tr(AB^{-1}) - \frac12\tr(A)\tr(B) = \frac12(\tr(AB^{-1}) - \tr(AB))$. Lemma \ref{lem-GMTraceMatrix} implies that every size 4 minor in $\left[\tr(X_iX_j^{-1}) - \tr(X_iX_j)\right]_{2\le i,j\le m}$ is zero. Hence, since $X_1$ is the identity,

		\[\det\left[\tr(X_iX_j^{-1})\right] = \left|\begin{array}{c | c} 2 & \tr(X_2) \qquad \tr(X_3) \qquad \ldots \qquad \tr(X_m) \\ \hline \\ {\bf 0} & \text{\Large $\frac12\left[\tr(X_iX_j^{-1}) - \tr(X_iX_j)\right]$}_{2\le i,j\le m} \end{array}\right| = 0.\]
Now we note that having $X_1$ be the identity was only cosmetic since, in the more general case, if we set $X_1'=\text{Id}$ and $X_i' = X_iX_1^{-1}$ the original matrix under consideration is precisely that corresponding to $X_1',\ldots,X_m'$ and $(X_1')^{-1},\ldots,(X_m')^{-1}$.
\end{proof}

An elementary consequence of Corollary \ref{cor-tracematrixisnotrank5} is the following.
\begin{cor}Suppose that $X_i\in\SL_2\bb C$, $i=1,\ldots,5$. Then the determinant
		\[\left|\begin{matrix}2 & \tr(X_1X_2^{-1}) & \tr(X_1X_3^{-1}) & \tr(X_1X_5^{-1}) \\ \tr(X_2X_1^{-1}) & 2 & \tr(X_2X_3^{-1}) & \tr(X_2X_5^{-1}) \\ \tr(X_3X_1^{-1}) & \tr(X_3X_2^{-1}) & 2 & \tr(X_3X_5^{-1}) \\ \tr(X_4X_1^{-1}) & \tr(X_4X_2^{-1}) & \tr(X_4X_3^{-1}) & \tr(X_4X_5^{-1})\end{matrix}\right|\]
is a multiple of $\det\left[\tr(X_iX_j^{-1})\right]_{1\le i,j\le 4}$, and so is zero if the latter is zero.
\label{cor-switchcolumn}
\end{cor}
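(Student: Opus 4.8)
The plan is to recognize the displayed $4\times 4$ determinant as the determinant of a submatrix of the $5\times 5$ trace matrix $N := \bigl[\tr(X_iX_j^{-1})\bigr]_{1\le i,j\le 5}$, to which Corollary \ref{cor-tracematrixisnotrank5} applies directly. Write $D_1,\dots,D_5\in\bb C^5$ for the columns of $N$, and let $C_j\in\bb C^4$ consist of the first four entries of $D_j$. Since $\tr(X_iX_i^{-1}) = 2$, the matrix in the statement equals $M' := [C_1\mid C_2\mid C_3\mid C_5]$, while $\bigl[\tr(X_iX_j^{-1})\bigr]_{1\le i,j\le 4} = M := [C_1\mid C_2\mid C_3\mid C_4]$; note that the rows of both $M$ and $M'$ are restrictions of rows $1$ through $4$ of $N$. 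Two properties of $N$ do all the work: $\det N = 0$ by Corollary \ref{cor-tracematrixisnotrank5}, and $N$ is symmetric, since $\tr(X_jX_i^{-1}) = \tr\bigl((X_iX_j^{-1})^{-1}\bigr) = \tr(X_iX_j^{-1})$.

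Because $\det N = 0$, there is a nonzero tuple $(\mu_1,\dots,\mu_5)$ with $\sum_{j=1}^5 \mu_j D_j = 0$; restricting to the first four coordinates gives $\sum_j \mu_j C_j = 0$. I would first treat the case $\mu_5 \neq 0$: here $C_5 = -\mu_5^{-1}(\mu_1 C_1 + \mu_2 C_2 + \mu_3 C_3 + \mu_4 C_4)$, and expanding $\det M' = \det[C_1\mid C_2\mid C_3\mid C_5]$ by multilinearity in the last column---the three terms with a repeated column being zero---gives $\det M' = -\tfrac{\mu_4}{\mu_5}\,\det M$. This already yields the conclusion and even exhibits the multiple explicitly.

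The remaining case $\mu_5 = 0$ is the only real obstacle, and it is where the symmetry of $N$ is indispensable: the relation $\mu_1 D_1 + \mu_2 D_2 + \mu_3 D_3 + \mu_4 D_4 = 0$ now involves only columns $1$ through $4$ of $N$, so restricting to the first four rows merely recovers a dependence among the columns of $M$ and says nothing about $C_5$. Transposing this relation and using $N = N^{\top}$, one obtains the same dependence $\mu_1 R_1 + \mu_2 R_2 + \mu_3 R_3 + \mu_4 R_4 = 0$ among the rows $R_1,\dots,R_4$ of $N$; restricting this identity to columns $\{1,2,3,5\}$---which is exactly where the rows of $M'$ live---produces a nontrivial dependence (with the same, not-all-zero coefficients) among the four rows of $M'$, so $\det M' = 0$. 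Thus in every case $\det M'$ is a scalar multiple of $\det M$, and in particular it vanishes whenever $\det M$ does. The point worth flagging is that for a generic $4\times 5$ matrix the statement would be false, so the argument must genuinely use the symmetry of the trace matrix---not merely the vanishing of its determinant---precisely in the $\mu_5 = 0$ case.
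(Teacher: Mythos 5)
Your proof is correct. The paper offers no written proof here---the corollary is asserted as an ``elementary consequence'' of Corollary \ref{cor-tracematrixisnotrank5}---and your argument supplies exactly the intended details: a linear dependence among the columns of the singular $5\times 5$ trace matrix, combined with its symmetry ($\tr(X_jX_i^{-1})=\tr((X_iX_j^{-1})^{-1})=\tr(X_iX_j^{-1})$) to dispose of the case where the dependence does not involve the fifth column. Your closing remark is also well taken: vanishing of the $5\times 5$ determinant alone would not suffice (a non-symmetric counterexample is easy to build), so the symmetry of the trace matrix is the essential point that the paper's one-line assertion leaves implicit, and you use it correctly.
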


\begin{rem}
Suppose for $X_1,X_2,X_3$ that some $T$ is found as in Lemma \ref{lem-MagnusTraceless}. While not mentioned in \cite{M}, if there are $X_j\in\SL_2\bb C$, $j>3$, such that $\D(X_1,X_2,X_j)=0$ also then $(TX_j)^2 = -\text{Id}$. To see this, note the method used to find
\[T = \begin{pmatrix}t_{11} & t_{12} \\ t_{21} & -t_{11}\end{pmatrix}\]
was to consider $X_i=\begin{pmatrix}a_i & b_i \\ c_i & d_i\end{pmatrix}\in\SL_2\bb C$ and find $(t_{11},t_{21},t_{12})$ orthogonal to all $(a_i-d_i,b_i,c_i)$, for $i=1,2,3$. The square of the determinant of this system equals $\D(X_1,X_2,X_3) $. In addition, $T$ can be shown to be invertible provided the commutator, of $X_1,X_2$ say, does not have trace 2. But, in particular, $\tr([X_1,X_2])\ne2$ means the two rows $(a_i-d_i,b_i,c_i)$ where $i=1,2$ are independent and so the same $(t_{11},t_{21},t_{12})$ is orthogonal to $(a_j-d_j,b_j,c_j)$, $j>3$.
\label{rem-Tappliesmore}
\end{rem}

\subsection{Trace-free characters of $E_K$}
\label{sec-TFchars}

Given an knot $K\subset S^3$, a \textit{meridian} in $\pi_1(E_K)$ is an element represented by a loop that is the oriented boundary of a disk $D$, embedded in $S^3$, such that there is one point in $D\cap K$, the intersection being transverse. Note that if we choose the basepoint of  $\pi_1(E_K)$ on the boundary torus this picks out a meridian, the boundary of a meridian disk of our torus neighborhood of $K$. For any meridian, conjugates of that meridian generate $\pi_1(E_K)$.

A representation $\rho\in\text{Hom}(\pi_1(E_K),\SL_2\bb C)$ is called \textit{trace-free} if $\tr(\rho(m))=0$ for a meridian $m$. The set of characters of trace-free representations will be written $X_{TF}(E_K)$, which, by the above, is not affected by choice of $m$. As $X(E_K)$ is known to be algebraic, so is $X_{TF}(E_K)$. 

In fact, Nagasato \cite{Nag} has given a description of $X_{TF}(E_K)$ as an algebraic set that will be useful to us. Given a knot diagram $D$ of $K$ with $n$ crossings, let $\langle m_1,\ldots,m_n | r_1,\ldots,r_n\rangle$ be a Wirtinger presentation of $\pi_1(E_K)$. Recall that each $m_i, i=1,\ldots,n$ is a meridian (associated to a segment labeled $i$) and that a relation $r_j$ is determined by a crossing. 

Let $(\{x_{ab}, 1\le a< b\le n\}, \{x_{abc}, 1\le a<b<c\le n\})$ be coordinates on $\bb C^{\binom n2 + \binom n3}$. The $x_{ab}$ (resp.\ $x_{abc}$) coordinate of $p\in\bb C^{\binom n2 + \binom n3}$ is denoted $x_{ab}(p)$ (resp.\ $x_{abc}(p)$). Below $(i,j,k)$ denotes a crossing of the knot diagram, where $i$ is the label of the overcrossing strand and $j,k$ the labels of the two undercrossing strands at that crossing. We let $x_{aa}=2, x_{ab} = x_{ba}$ and $x_{a_{\sg(1)}a_{\sg(2)}a_{\sg(3)}} = \text{sign}(\sg)x_{a_1a_2a_3}$. In addition, given $1\le a_1<a_2<a_3\le n$ and $1\le b_1<b_2<b_3\le n$ set ${\bf a} = (a_1,a_2,a_3)$ and ${\bf b} = (b_1,b_2,b_3)$ and for a point $p\in \bb C^{\binom n2 + \binom n3}$ write
		\[T({\bf a},{\bf b})(p) := \det\left[x_{a_ib_j}(p)\right]_{1\le i,j\le 3}.\]
The main theorem of \cite{Nag} is that $X_{TF}(E_K)$ is equivalent to the set of points in $\bb C^{\binom n2 + \binom n3}$ cut out by the following equations, where $i,j,k$ run over all crossings $(i,j,k)$ in the diagram:
	\begin{align}
		x_{ja} + x_{ka} &= x_{ji}x_{ia},\quad 1\le a\le n \label{F2}\\
		x_{jab} + x_{kab} &=  x_{ji}x_{iab}, \quad 1\le a < b\le n \label{F3}\\
		x_{a_1a_2a_3}x_{b_1b_2b_3} &= \frac12T({\bf a},{\bf b}), \quad 1\le a_1 < a_2 < a_3\le n, 1\le b_1 < b_2 < b_3\le n \label{H}\\
		0 &= \left|\begin{matrix}x_{11} & x_{12} & x_{1a} & x_{1b} \\ x_{21} & x_{22} & x_{2a} & x_{2b} \\ x_{a1} & x_{a2} & x_{aa} & x_{ab} \\ x_{b1} & x_{b2} & x_{ba} & x_{bb}\end{matrix}\right|, \quad 3\le a < b\le n \label{R}.
	\end{align}

Nagasato has the coordinates corresponding to the character of $\rho\co \pi_1(E_K)\to\SL_2\bb C$ by $x_{ab} \leftrightarrow -\tr(\rho(m_am_b))$ and $x_{abc} \leftrightarrow -\tr(\rho(m_am_bm_c))$. For convenience we put $x_{ab} \leftrightarrow \tr(\rho(m_am_b^{-1}))$, which is equivalent as meridians are trace free. Nagasato obtains these equations using the (trace-free) Kauffman Bracket Skein Algebra of $E_K$ at $t=-1$. A handlebody with $n$ handles is obtained from the knot diagram and the algebra is the quotient of the KBSA of that handlebody by the ``fundamental ideal'' given by relations coming from the crossings. Hereafter we identify $X_{TF}(E_K)\subset\bb C^{\binom n2 + \binom n3}$ with the affine variety just described, the implicit choice of some diagram of $K$ being understood.

Note that (\ref{R}) is a consequence of Lemma \ref{lem-GMTraceMatrix} since the meridians are trace-free. Moreover, one can replace indices 1 or 2 in the determinant of (\ref{R}) by another index and the determinant remains zero. Also, in Section 3.2 of \cite{Nag} it is shown that equations (\ref{F3}) are implied by (\ref{F2}), (\ref{H}), and (\ref{R}).

\begin{lem}Suppose that $q\in\bb C^{\binom n2}$ satisfies equations (\ref{F2}) and (\ref{R}). Choose $1\le a_1<a_2<a_3\le n$ such that $\delta({\bf a}) := \frac12 T({\bf a},{\bf a})(q) \ne 0$, if such exists, and choose a square root $\sqrt{\delta({\bf a})}$. If no such ${\bf a}$ exists set ${\bf a} =(1,2,3)$. There exists a unique $p\in X_{TF}(E_K)$ with $x_{ab}(p) = x_{ab}(q)$ for $1\le a<b\le n$ and $x_{a_1a_2a_3}(p) = \sqrt{\delta({\bf a})}$.
\label{lem-SqrootCorrespondence}
\end{lem}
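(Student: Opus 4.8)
The plan is to construct the candidate point $p \in \mathbb C^{\binom n2 + \binom n3}$ explicitly, verify it satisfies Nagasato's equations (\ref{F2})--(\ref{R}), and then argue uniqueness. We already have the $\binom n2$ coordinates of $q$, which satisfy (\ref{F2}) and (\ref{R}); the task is to define the $\binom n3$ coordinates $x_{b_1b_2b_3}(p)$ for all triples and show the resulting point lies in $X_{TF}(E_K)$. The natural definition, forced by (\ref{H}) once we fix $x_{a_1a_2a_3}(p) = \sqrt{\delta(\mathbf a)}$, is
\[
x_{b_1b_2b_3}(p) := \frac{T(\mathbf a,\mathbf b)(q)}{2\sqrt{\delta(\mathbf a)}}
\]
in the case $\delta(\mathbf a) \ne 0$, and $x_{b_1b_2b_3}(p) := 0$ for all triples in the degenerate case where no such $\mathbf a$ exists. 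Extend by the sign convention $x_{b_{\sigma(1)}b_{\sigma(2)}b_{\sigma(3)}} = \operatorname{sign}(\sigma)\, x_{b_1b_2b_3}$. Note $T(\mathbf a,\mathbf b)(q)$ only involves the $x_{ab}(q)$, so this is well-defined from $q$ alone.

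The verification splits into checking each family of equations. Equation (\ref{F2}) holds because $p$ has the same pair-coordinates as $q$. Equation (\ref{R}) likewise transfers from $q$ unchanged, and as noted in the excerpt one may freely swap the indices $1,2$ there. For (\ref{H}), one must show $x_{b_1b_2b_3}(p)\, x_{c_1c_2c_3}(p) = \tfrac12 T(\mathbf b,\mathbf c)(q)$ for \emph{all} pairs of triples $\mathbf b, \mathbf c$, not just those involving $\mathbf a$. Substituting the definitions, this reduces to the identity
\[
T(\mathbf a,\mathbf b)(q)\, T(\mathbf a,\mathbf c)(q) = T(\mathbf a,\mathbf a)(q)\, T(\mathbf b,\mathbf c)(q),
\]
which is precisely a Plücker-type / adjugate relation for $3\times 3$ minors of the (symmetric) matrix $M = [x_{ab}(q)]_{1\le a,b\le n}$ with $x_{aa}=2$; it is exactly the statement that $M$ has rank at most $3$ on the relevant index sets, which is forced by (\ref{R}) (every $4\times 4$ minor of $M$ vanishes by Lemma \ref{lem-GMTraceMatrix} / equation (\ref{R}), after the index swaps). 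So (\ref{H}) follows from a rank condition already encoded in $q$. Finally, (\ref{F3}) need not be checked separately: the excerpt records that (\ref{F3}) is implied by (\ref{F2}), (\ref{H}), and (\ref{R}). In the degenerate case all $x_{b_1b_2b_3}(p)=0$ and $T(\mathbf b,\mathbf c)(q)=0$ for all $\mathbf b,\mathbf c$ (again by the rank $\le 2$ condition on $M$), so (\ref{H}) reads $0=0$ and the rest go through as before.

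For uniqueness: any $p' \in X_{TF}(E_K)$ with the prescribed pair-coordinates and $x_{a_1a_2a_3}(p') = \sqrt{\delta(\mathbf a)}$ must, by (\ref{H}) with one triple equal to $\mathbf a$, satisfy $x_{b_1b_2b_3}(p')\, x_{a_1a_2a_3}(p') = \tfrac12 T(\mathbf a,\mathbf b)(q)$, and since $\sqrt{\delta(\mathbf a)}\ne 0$ this determines every $x_{b_1b_2b_3}(p')$, forcing $p' = p$. In the degenerate case, $\delta(\mathbf a)=\tfrac12 T(\mathbf a,\mathbf a)(q)=0$ for all $\mathbf a$, and then (\ref{H}) with $\mathbf b = \mathbf c$ gives $x_{b_1b_2b_3}(p')^2 = \tfrac12 T(\mathbf b,\mathbf b)(q) = 0$, so all triple-coordinates vanish and again $p'=p$. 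The main obstacle I anticipate is the bookkeeping in establishing the adjugate identity $T(\mathbf a,\mathbf b)\,T(\mathbf a,\mathbf c) = T(\mathbf a,\mathbf a)\,T(\mathbf b,\mathbf c)$ in the needed generality from the vanishing of $4\times 4$ minors — in particular making sure the index-swap remarks after (\ref{R}) genuinely give enough vanishing minors to pin down the rank of $M$ on arbitrary $6$-element index multisets, rather than just those containing $\{1,2\}$ — but this should be a clean piece of linear algebra rather than a genuine difficulty.
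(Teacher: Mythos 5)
Your construction coincides with the paper's: you define the triple coordinates through (\ref{H}) using the chosen triple $\mathbf{a}$, reduce existence to the single identity $T(\mathbf{a},\mathbf{b})T(\mathbf{a},\mathbf{c})=T(\mathbf{a},\mathbf{a})T(\mathbf{b},\mathbf{c})$ --- this is exactly the paper's equation (\ref{eqn-HexagonConsistency}) --- and you settle uniqueness and the degenerate case directly from (\ref{H}), which the paper leaves implicit. The one genuine divergence is how the identity is obtained. You want it as a consequence of the global statement that $[x_{ab}(q)]$ is a complex symmetric matrix of rank at most $3$, via the factorization of its $3\times3$ minors (write $M=V^{T}V$ with $V$ of size $3\times n$, so $T(\mathbf{a},\mathbf{b})=\det V_{\mathbf{a}}\det V_{\mathbf{b}}$). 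The paper argues locally instead: for two triples differing in one index it writes the $4\times4$ determinant with columns $a_1,a_2,a_3',a_3$ and arbitrary rows $b_1,b_2,b_3,b$, uses its vanishing to produce a single proportionality constant $\alpha$ with $T(\mathbf{b},\mathbf{a}')=\alpha T(\mathbf{b},\mathbf{a})$ and $T(\mathbf{a},\mathbf{a}')=\alpha T(\mathbf{a},\mathbf{a})$ (the argument of Corollary \ref{cor-switchcolumn}), and then propagates to arbitrary pairs of triples by changing one index at a time. Both routes consume the same input: vanishing of $4\times4$ minors of $[x_{ab}(q)]$ with arbitrary, in general non-principal, row and column sets not containing $\{1,2\}$.

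Concerning the obstacle you flag: your instinct that this is the crux is correct, but it cannot be discharged by ``clean linear algebra'' from the literal equations (\ref{R}). Those equations only kill principal $4\times4$ minors whose index set contains $\{1,2\}$, and even the vanishing of \emph{all} principal $4\times4$ minors does not force a symmetric matrix to have rank at most $3$: for the adjacency matrix $B$ of a $5$-cycle, the matrix $\operatorname{adj}(B)=\det(B)B^{-1}$ is symmetric of rank $5$, yet every one of its principal $4\times4$ minors vanishes (they are, up to a factor, the diagonal entries of $B$). Such a matrix need not satisfy (\ref{F2}), so this is not a counterexample to the lemma; it only shows that the extra vanishing must come from the extension of (\ref{R}) recorded in the sentence following it (indices $1$ and $2$ may be replaced by arbitrary indices), which rests on Nagasato's setup and (\ref{F2}) rather than on matrix identities applied to (\ref{R}) alone. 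The paper invokes exactly this extension at the corresponding step (``From (\ref{R}) we have \ldots'' for the displayed non-principal determinant), so your plan goes through on the same footing once you cite or prove that extension; as written, however, the assertion that rank $\le 3$ ``is forced by (\ref{R}) after index swaps'' (and likewise the rank $\le 2$ claim in your degenerate case) is the one step still owed, and it is not merely bookkeeping.
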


\begin{proof}Define $x_{ab}(p) = x_{ab}(q)$ and $x_{a_1a_2a_3}(p) = \sqrt{\delta({\bf a})}$ ($=0$ if all $T({\bf a},{\bf a})$ are zero). For $1\le b_1<b_2<b_3\le n$, use (\ref{H}) and $x_{a_1a_2a_3}(p) = \sqrt{\delta({\bf a})}$ to define $x_{b_1b_2b_3}(p)$. We need only show, given $1\le a_1'<a_2'<a_3'\le n$ with some $a_i' \ne a_i$, that $x_{a_1'a_2'a_3'}x_{b_1b_2b_3}$ satisfies (\ref{H}) for all $1\le b_1<b_2<b_3\le n$. This is equivalent to demonstrating
		\begin{equation}
		T({\bf a},{\bf b})T({\bf a},{\bf a'}) = T({\bf b},{\bf a'})T({\bf a},{\bf a}).
		\label{eqn-HexagonConsistency}
		\end{equation}
Consider the case that $a_1 = a_1', a_2 = a_2',$ but $a_3 \ne a_3'$. From (\ref{R}) we have that for any $1\le b\le n$,
			\[\left|\begin{matrix}x_{b_1a_1} & x_{b_1a_2} & x_{b_1a_3'} & x_{b_1a_3} \\ x_{b_2a_1} & x_{b_2a_2} & x_{b_2a_3'} & x_{b_2a_3} \\ x_{b_3a_1} & x_{b_3a_2} & x_{b_3a_3'} & x_{b_3a_3} \\ x_{ba_1} & x_{ba_2} & x_{ba_3'} & x_{ba_3}\end{matrix}\right| = 0\]
 at $q$. The same elementary argument which shows that Corollary \ref{cor-switchcolumn} follows from Corollary \ref{cor-tracematrixisnotrank5} may be applied here to see $T({\bf b},{\bf a'}) = \alpha T({\bf b},{\bf a})$ for some $\alpha$. Moreover, considering the linear relations among the columns when setting $b=a_1,a_2$, or $a_3$ in the matrix above, we see $T({\bf a},{\bf a'}) = \alpha T({\bf a},{\bf a})$ for the same $\alpha$. As $T({\bf a},{\bf b}) = T({\bf b},{\bf a})$, this shows that (\ref{eqn-HexagonConsistency}) holds in this case. 

 The argument did not depend on which component of ${\bf a}$ differed from that of ${\bf a'}$, and only required that $x_{a_1a_2a_3}x_{b_1b_2b_3}$ and $x_{a_1a_2a_3}x_{a_1'a_2'a_3'}$ satisfy (\ref{H}). Hence, successively replacing ${\bf a}$ for a triple on which the equality has already been shown, the general case follows.
\end{proof}

There is an involution $\iota\co R(E_K)\to R(E_K)$ studied in \cite{NY}. It is defined by $\iota(\rho)(g) = (-1)^{\text{lk}(g)}\rho(g)$. Here $\text{lk}(g)$ is the linking number of $g$ with $K$, i.e. $\text{lk}\co \pi_1(E_K)\to\ints$ is the homomorphism determined by sending a meridian to $1$. This gives an involution, $\chi_\rho\leftrightarrow\chi_{\iota(\rho)}$ on $X(E_K)$ as well (which we denote by $\iota$ also), which restricts to an involution on $X_{TF}(E_K)$. 

\begin{prop}[\cite{NY}] The set of fixed points $X(E_K)^{\iota}$ is contained in $X_{TF}(E_K)$ and $X(E_K)^{\iota}$ equals the union of the irreducible metabelian characters and the character of the abelian representation determined by 
		\[m\mapsto \begin{pmatrix}\sqrt{-1}&0 \\ 0&-\sqrt{-1}\end{pmatrix}.\]
\label{prop-NYfixedpts}
\end{prop}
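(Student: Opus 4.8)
The plan is to break the statement into three parts: the containment $X(E_K)^{\iota}\subseteq X_{TF}(E_K)$, the identification of the \emph{irreducible} $\iota$-fixed characters, and the identification of the \emph{reducible} ones. The containment is immediate: if $\chi_\rho$ is $\iota$-fixed and $m$ is a meridian, then $\mathrm{lk}(m)=1$, so $\tr\rho(m)=\tr\iota(\rho)(m)=-\tr\rho(m)$, forcing $\tr\rho(m)=0$; hence $\rho$ is trace-free.

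For the irreducible case, note that twisting by the character $(-1)^{\mathrm{lk}}$ preserves invariant subspaces, so $\iota(\rho)$ is irreducible iff $\rho$ is; by the uniqueness statement of \cite{CS}, $\chi_\rho$ is $\iota$-fixed iff there is $A\in\SL_2\bb C$ with $A\rho(g)A^{-1}=(-1)^{\mathrm{lk}(g)}\rho(g)$ for all $g$. Write $N=\ker(\mathrm{lk})=[\pi_1(E_K),\pi_1(E_K)]$, so $\pi_1(E_K)$ is generated by $N$ and a meridian $m$ and every $g$ equals $n\,m^{\mathrm{lk}(g)}$ for a unique $n\in N$. Such an $A$ centralizes $\rho(N)$ while $A\rho(m)A^{-1}=-\rho(m)\ne\rho(m)$. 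I would first rule out $\rho|_N$ irreducible (its centralizer is scalar, incompatible with $A\rho(m)A^{-1}=-\rho(m)$); then, using that $N$ is normal and $\rho$ has no invariant line, rule out $\rho(N)$ having a unique common eigenline (that line would be $\rho(\pi_1(E_K))$-invariant); what remains is that $\rho(N)$ is simultaneously diagonalizable, hence abelian, so $\rho$ is metabelian. For the converse I would invoke the standard normal form of an irreducible metabelian $\SL_2\bb C$ representation of a knot group — $\rho(N)$ diagonal and $\rho(m)$ anti-diagonal (so automatically trace-free) — and check that $A=\mathrm{diag}(1,-1)$ conjugates $\rho$ to $\iota(\rho)$, since it fixes the diagonal part and negates $\rho(m)$, and $\rho(g)=\rho(n)\rho(m)^{\mathrm{lk}(g)}$. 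Thus the irreducible $\iota$-fixed characters are exactly the irreducible metabelian ones.

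For the reducible case, a reducible character equals that of its semisimplification, a sum of inverse characters $\pi_1(E_K)\to\bb C^\ast$; such a character factors through $H_1(E_K)=\ints$, so it is the character $\chi_\lambda$ of the abelian representation $m\mapsto\mathrm{diag}(\lambda,\lambda^{-1})$ for some $\lambda\in\bb C^\ast$, with $\chi_\lambda$ depending only on $\lambda$ up to inversion. A direct computation gives $\iota(\chi_\lambda)=\chi_{-\lambda}$, so $\chi_\lambda$ is $\iota$-fixed iff $\lambda+\lambda^{-1}=-\lambda-\lambda^{-1}$, i.e.\ $\lambda^2=-1$, i.e.\ $\lambda=\pm\sqrt{-1}$; these give the single (trace-free) character of $m\mapsto\mathrm{diag}(\sqrt{-1},-\sqrt{-1})$. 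Combining the three parts yields the statement.

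The step I expect to be the main obstacle is the structural part of the irreducible case: organizing the centralizer-plus-normality argument so that it cleanly forces $\rho(N)$ abelian, and in particular excluding the degenerate possibility that $\rho(N)\subseteq\{\pm\mathrm{Id}\}$ gives a genuinely irreducible (``quaternionic'') representation — here one uses that $H_1(E_K;\ints/2)=\ints/2$, so the image of such a $\rho$ would be abelian — together with correctly setting up the normal form for irreducible metabelian representations needed in the converse direction.
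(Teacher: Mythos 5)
Your proposal is correct. Note, however, that the paper does not prove Proposition \ref{prop-NYfixedpts} at all: the statement is quoted from Nagasato--Yamaguchi \cite{NY} and used as imported background, so there is no internal proof to compare against; your argument is essentially a self-contained reconstruction of the standard one from that reference. The three-part structure is sound and complete: the containment in $X_{TF}(E_K)$ from $\operatorname{lk}(m)=1$; the Culler--Shalen conjugation criterion plus the centralizer/normality analysis of $\rho(N)$ for $N=\ker(\operatorname{lk})$, which forces $\rho(N)$ to be simultaneously diagonalizable and hence $\rho$ metabelian in the irreducible case, with the converse via the diagonal/anti-diagonal normal form; and the computation $\iota(\chi_\lambda)=\chi_{-\lambda}$ identifying the unique reducible fixed character as that of $m\mapsto\operatorname{diag}(\sqrt{-1},-\sqrt{-1})$. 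Two small points of hygiene: $\operatorname{diag}(1,-1)$ has determinant $-1$, so either conjugate by $\operatorname{diag}(\sqrt{-1},-\sqrt{-1})\in\SL_2\mathbb{C}$ or simply note that character equality only requires trace preservation, which any conjugation supplies; and the appeal to $H_1(E_K;\mathbb{Z}/2)$ is unnecessary --- if $\rho(N)\subseteq\{\pm\mathrm{Id}\}$ then the image of $\rho$ in $\mathrm{PSL}_2\mathbb{C}$ is cyclic, so $\rho(\pi_1(E_K))$ is a central extension of a cyclic group, hence abelian, hence reducible, which excludes this degenerate case for irreducible $\rho$.
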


The set of irreducible metabelian characters in $X(E_K)$ equals the set of binary-dihedral $\SU(2)$ characters (see e.g.\ \cite[Proposition 2]{NY}). They are finite in number, of cardinality $(|\D_K(-1)|-1)/2$, where $\D_K(t)$ is the Alexander polynomial \cite{klassen}.

From the identification of $X_{TF}(E_K)$ with the algebraic set given by (\ref{F2}), (\ref{H}), and (\ref{R}) note that a point in $X_{TF}(E_K)$ is fixed by $\iota$ precisely if it projects on the last $\binom n3$ factors as zero. That is, if and only if $x_{abc} = 0$ for all $1\le a<b<c \le n$. By the equations (\ref{H}), this implies that $q\in X_{TF}(E_K)$ is a fixed point of $\iota$ if and only if the matrix $\left[x_{ab}(q)\right]_{1\le a,b\le n}$ has rank less than three.

\section{The cord algebra}
\label{SecBGC_K}

\subsection{Definitions}
\label{sec-HCdefinitions}

Given a knot $K\subset S^3$ with a basepoint $\ast$ on $K$, define a \textit{cord} of $(K,\ast)$ to be a path $\gamma\co [0,1]\to S^3$ such that $\gamma^{-1}(K)=\set{0,1}$ and $\ast\not\in\gamma([0,1])$.

The algebra $HC_0(K)$ in grading zero of knot contact homology admits a description where it is generated by homotopy classes of cords over a Laurent polynomial ring $\bb Z[\mu^{\pm1},\lambda^{\pm1}]$, that polynomial ring being identified with the group ring of the first homology of a torus over $K$ in the unit cotangent bundle \cite{Ng08,Ng14}. In our discussion on augmentations below, we will refer to this algebra. However, for the purpose of the results in this paper, it suffices to give a presentation of the commutative quotient ring $\hc(K)$ from \cite{Ng05b}. An analogue of our results here on representations into higher rank Lie groups, if such exists, will likely require consideration of a different quotient of $HC_0(K)$.

\begin{defn} Consider the commutative ring (with 1) generated by unoriented homotopy classes of cords of $K$, forgetting the basepoint. That is, homotopy of cords allows endpoints to move along $K$, even across the basepoint; and by an unoriented homotopy class we mean that the equivalence relation puts cords $\gamma$ and $\bar\gamma$ in the same class, where $\bar\gamma\co[0,1]\to S^3$ is defined by $\bar\gamma(t) = \gamma(1-t)$. The product is a formal product on these classes. The \emph{abelian cord ring} $\hc(K)$ is the quotient of this ring by the ideal generated by the relations

\begin{figure}[ht]
\begin{tikzpicture}[scale=0.5,>=stealth]
%%% first relation	
	\draw(-3,0) node {(1)}; 
	\draw[very thick,->](1,1)--(-1,-1);
	\draw[gray](0,0) ..controls (-1.5,0)and(0,1.5).. (0,0);
	\draw(2,-.25) node {$=$};
	\draw(3.5,-.25) node {$2$;};
	
%%% third relation
	\draw(-3,-3) node {(2)}; 
	\draw[very thick,->](1,-2)--(-1,-4);
	\draw[->, white,line width=3pt](-1,-2)--(1,-4);
	\draw[gray](-1,-2)--(1,-4);	
	\draw(2,-.25-3) node {$+$};
	%\draw(3,-.25-3) node {$\mu$};
	\draw[gray](3.5,-2)--(5.5,-4);
	\draw[->,very thick,draw=white,double=black,double distance=1pt](5.5,-2)--(3.5,-4);
	\draw(6.5,-.25-3)node{$=$};
	\draw[very thick,->](9,-2)--(7,-4);
	\draw[gray](7,-2)--(8,-3);
	\filldraw(9.5,-3) circle (1pt);
	\draw[very thick,->](12,-2)--(10,-4);
	\draw[gray](11,-3)--(12,-4);
	
	\end{tikzpicture}
\caption{Relations in $\hc(K)$}
\label{fig-cordrelations}
\end{figure}
\label{defnAbCordRing}
\end{defn}

In the above definition, relations are between any cords that differ only locally, as shown. The knot $K$ is depicted more thickly. Also the figures depict a relation in 3-dimensions. 

The map sending the homotopy class $[\gamma]$ of a cord $\gamma$ to $-[\gamma]$ induces an isomorphism with the algebra defined in \cite{Ng05b}.

\comment{%%%%%%%%%%%%%%%%%%%%%%%%%%%%%%%%%%%%%%%%%%%%%%%%%%%%%%%%%%%%%%%%%%%%%%%%%%%%%%%%%%%%%%%%%%%%%%%
Recalling the homotopy formulation $\cl P_K$ of the cord algebra $HC_0(K)$, we have the following. 

\begin{prop} Let $P_K$ denote the underlying set of the knot group $\pi_K$, where we write $[\gamma]\in P_K$ for $\gamma\in\pi_K$. In $\pi_K$ let $e$ be the identity, $m$ a choice of meridian, and $\ell$ the preferred longitude of $K$. The ring $\hc(K)$ is isomorphic to the commutative ring generated by $P_K$, modulo the relations:
	\en{
	\item $[e]=2$;
	\item $[m\gamma]=-[\gamma]=[\gamma m]$ and $[\ell\gamma]=[\gamma]=[\gamma\ell]$, for any $\gamma\in\pi_K$;
	\item $[\gamma_1\gamma_2]-[\gamma_1m\gamma_2]=[\gamma_1][\gamma_2]$ for any $\gamma_1,\gamma_2\in\pi_K$.
	}
\label{PropPi1Cords}
\end{prop}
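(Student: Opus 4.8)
The plan is to deduce the statement from the known homotopy formulation of degree-zero knot contact homology. Recall that $HC_0(K)$, over $\bb Z[\mu^{\pm1},\lambda^{\pm1}]$, is presented (\cite{Ng08,Ng14}) by homotopy classes of cords of $(K,\ast)$ subject to a ``trivial cord'' relation, relations recording the effect of dragging a cord endpoint once around $K$ along a longitude, and a skein relation for passing a strand of $K$. By the remark following Definition \ref{defnAbCordRing}, $\hc(K)$ is obtained from this by passing to the commutative quotient, specializing $\mu=\lambda=1$, and applying the sign normalization $[\gamma]\mapsto -[\gamma]$; so it suffices to check that under this specialization the three families of relations become exactly (1), the longitude half of (2), and (3), the meridian half of (2) then being the special case $\gamma_1=e$ (or $\gamma_2=e$) of (3) together with $[e]=2$.

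Alternatively, and more self-containedly, I would build the isomorphism directly. Fix $\ast\in K$, push it off $K$ to $p\in E_K$, set $\pi_K=\pi_1(E_K,p)$, and fix an orientation of $K$ and a longitudinal pushoff $K'$. Let $\Psi$ be the ring homomorphism from $\bb Z[P_K]$ to $\hc(K)$ sending $[g]$, $g\in\pi_K$, to the class of the cord obtained by splicing the short arc from $p$ to $\ast$ onto both ends of a loop representing $g$ and perturbing the interior off $K$. The first task is to see that $\Psi$ descends to the quotient by the relations (1)--(3): relation (1) is Figure \ref{fig-cordrelations}(1) for the constant loop; the longitude relations hold because pre- or post-composing a representative of $g$ with a loop around $K'$ yields, after splicing, a cord homotopy that drags an endpoint once around $K$ and across $\ast$ (allowed in $\hc(K)$); and relations (2)--(3) are read off from Figure \ref{fig-cordrelations}(2) after realizing $[\gamma_1 m\gamma_2]$ as the spliced cord for $\gamma_1\gamma_2$ with a nearby strand of $K$ passed to the other side, the signs being those dictated by the normalization in Definition \ref{defnAbCordRing}. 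Commutativity of the target is no obstruction, $\hc(K)$ being commutative.

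It then remains to prove that $\Psi$ is a bijection. Surjectivity is immediate, since up to homotopy every cord has the spliced form. For injectivity I would construct the inverse $\Phi\co\hc(K)\to R$, where $R$ is the ring in the statement, by sending a cord $\gamma$ to $[g_\gamma]$, with $g_\gamma$ the class of the loop obtained by pushing the endpoints of $\gamma$ off $K$ along $K'$ and joining them back to $p$ along $K'$; by construction $\Phi$ and $\Psi$ act as mutually inverse maps on generators. The delicate point, and the step I expect to be the main obstacle, is that $\Phi$ is well defined on $\hc(K)$ with values in $R$: the longitudinal joins are ambiguous up to left and right multiplication by powers of $\ell$, which $R$ kills by the longitude relations, but the unoriented equivalence $\gamma\sim\bar\gamma$ sends $g_\gamma$ to $g_\gamma^{-1}$, so one must show that $[g]=[g^{-1}]$ holds in $R$. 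I would prove this identity from relations (1)--(3) by induction on word length, using the skein relation (3) to reduce to the case of a (conjugate) meridian, where it follows from (2); with that in hand, $\Phi$ is a ring homomorphism inverse to $\Psi$, and the proof is complete.
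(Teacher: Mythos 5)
Your first paragraph is essentially the route the paper itself takes: the statement is justified there only by an appeal to the isomorphism $F_{\cl P}\co\cl P_K\to HC_0(K)$ from Ng's homotopy formulation, together with the observation that $\hc(K)$ is the corresponding quotient of $HC_0(K)$. Two cautions on that route. First, the specialization is $\mu=-1$, $\lambda=1$ (reflective augmentations satisfy $\ve(\mu)=-1$, as used in the proof of Lemma \ref{lem-principalminors}), not $\mu=\lambda=1$. Second, and more seriously, the quotient $HC_0(K)\to\hc(K)$ also identifies each oriented cord with its reverse (equivalently $c_{ij}$ with $c_{ji}$), which in the $\pi_1$ picture is the identification $[\gamma]\sim[\gamma^{-1}]$. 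That relation is not among (1)--(3), so even in the ``deduce from Ng'' route one must show it is a consequence of (1)--(3) and commutativity. This is precisely the point your direct argument isolates as the delicate step, and it is where the genuine gap lies.

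The proposed induction does not work as described, because relation (3) rewrites $[\gamma_1\gamma_2]$ as $[\gamma_1][\gamma_2]+[\gamma_1m\gamma_2]$, inserting a meridian and lengthening the word; no complexity decreases. Concretely, set $S=\{\gamma : [\gamma]=[\gamma^{-1}]\}$. From (1)--(3) one gets $[\gamma_1m\gamma_2]=[\gamma_1m^{-1}\gamma_2]=[\gamma_1\gamma_2]-[\gamma_1][\gamma_2]$, and hence, for $\gamma_1,\gamma_2\in S$, that $\gamma_1\gamma_2\in S$ if and only if $\gamma_1m\gamma_2\in S$ --- the inductive step is circular. (The base case is fine: taking $\gamma_2=\gamma_1^{-1}$ in (3) gives $[\alpha m^{\pm1}\alpha^{-1}]=2-[\alpha][\alpha^{-1}]$, so conjugates of $m^{\pm1}$ do lie in $S$.) Closing this requires an actual mechanism: for instance a normal form for the ring generated by $P_K$ modulo (1)--(3) (Ng's arguments for statements of this kind go through a bridge/plat or braid normal form), or an identification of that ring with the ring generated by \emph{oriented} cords followed by a geometric argument that orientation reversal acts trivially. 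Until $[\gamma]=[\gamma^{-1}]$ is established, your $\Phi$ is not known to be well defined and the injectivity of $\Psi$ is open; the remaining steps (descent of $\Psi$ through the relations, surjectivity, the mutual-inverse check on generators) are fine.
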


There is an isomorphism $F_{\cl P}\co \cl P_K\to HC_0(K)$ defined as follows. Suppose the basepoint $x$ for the group $\pi_K$ is on the boundary torus. Choose a fixed path $p$ from a nearby point on $K$ to $x$. Let $\overline p$ denote $p$ with reversed orientation. If $g\in\pi_K$ is represented by a loop $\gamma$, define $F_{\cl P}([g])$ to be $\mu^{\text{lk}(\gamma,K)}$ times the cord given by the concatenation $p\gamma\overline p$.

The commutative ring described in Proposition \ref{PropPi1Cords} is the quotient of $\cl P_K$ by the kernel of the composition of the quotient map $HC_0(K)\to\hc(K)$ with $F_{\cl P}$.
%This identification of $HC_0(K)$ with $\cl P_K$ uses a basepoint for $\pi_K$, as would be expected by the choice of $m$ in the definition of $\cl P_K$. The oriented boundary of a meridian disk of $n(K)$ that contains $x$ in its boundary is a representative of $m$.

} %%%%%%%%%%%%%%%%%%%%%%%%%%%%%%%%%%%%%%%%%%%%%%%%%%%%%%%%%%%%%%%%%%%%%%%%%%%%%%%%%%%%%%%%%%%%%%%%%%%%%

\subsection{The cord algebra from a knot diagram}
\label{sec-HCknotdiagram}

The abelian cord ring can be described as a polynomial ring from a knot diagram of $K$. This description will be useful in relation to $\bb C[X_{TF}(E_K)]$. Suppose the diagram has $n$ components (the segments along the knot between two consecutive under-crossings). Label these components by $\{1,\ldots,n\}$ and let $c_{ij}$ denote a homotopy class of cord that begins on (the preimage under projection of) component $i$, ends on $j$, and passes above $K$ in its interior. Then $\hc(K)$ is generated by the set $\{c_{ij} | 1\le i<j\le n\}$.

\begin{thm}[Ng05b] Write $\cl A^{ab}_n =\ints\langle c_{ij} | 1\le i < j\le n\rangle$. Denote a crossing in an $n$-crossing knot diagram $D$ of $K$ by $(i,j,k)$ where $i$ is the label on the overcrossing strand and $j,k$ are the two undercrossing strands. Define $\cl I_D$ to be the ideal in $\cl A^{ab}_n$ generated by 
	\begin{equation}
	\{c_{lj} + c_{lk} - c_{li}c_{ij}\ |\ (i,j,k)\text{ is a crossing, }1\le l\le n\},
	\label{eqn-diagram-defneqns}
	\end{equation}
where we set $c_{ll} = 2$ and $c_{ml} = c_{lm}$ for $l<m$. Then $\hc(K) \cong \cl A^{ab}_n / \cl I_D$.
\label{thm-knotdiagram}
\end{thm}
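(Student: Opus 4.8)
The plan is to show that the evident ring homomorphism $\Phi\colon \cl A^{ab}_n/\cl I_D\to\hc(K)$, given on generators by $c_{ij}\mapsto[c_{ij}]$ (the class of the cord running from arc $i$, over the diagram, to arc $j$), is an isomorphism. As $\cl A^{ab}_n$ is a polynomial ring, $\Phi$ is a well-defined ring map as soon as the generators of $\cl I_D$ are sent to $0$ (Step 2 below); surjectivity is Step 1, and injectivity is Step 3. For Step 1, put an arbitrary cord $\gamma$ in general position with respect to the projection defining $D$, so that the interior of $\gamma$ misses $K$ and the projection of $\gamma$ is immersed and meets the projection of $K$ transversally in the interiors of arcs. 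Induct primarily on the number of crossings of $\gamma$ with $K$, and secondarily on the number of these at which $\gamma$ passes \emph{under} $K$. If $\gamma$ has such an undercrossing, relation~(2) of Figure~\ref{fig-cordrelations} rewrites $[\gamma]$ as $[\gamma_1][\gamma_2]-[\gamma']$, where $\gamma'$ is $\gamma$ with that crossing changed to an overcrossing (one fewer undercrossing, same total) and $\gamma_1,\gamma_2$ are the two cords obtained by cutting $\gamma$ at the crossing and letting the two new endpoints fall onto $K$ (each with strictly fewer crossings); induction applies to all three terms. If $\gamma$ has no undercrossing, it homotopes to lie entirely above $D$, where it represents the class $c_{ij}$ with $i,j$ the arcs containing its endpoints, and relation~(1) gives $c_{ii}=2$. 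Hence the $c_{ij}$ generate $\hc(K)$ and $\Phi$ is onto.

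Step 2 is a local application of relation~(2). Fix a crossing $(i,j,k)$ and $1\le l\le n$, with $i$ the overstrand and $j,k$ the two halves of the understrand. Take the cord $c_{li}$, drag its endpoint onto arc $i$ right beside the crossing, and consider the cord $\delta$ continuing a little further so its endpoint lies on the understrand, arranged so that near this endpoint $\delta$ crosses the overstrand $i$ in projection. Applying relation~(2) at that crossing, one resolution reduces (by Step 1) to $c_{lj}$, the other to $c_{lk}$, and the product of the two cut pieces is $c_{li}\cdot c_{ij}$; hence $c_{lj}+c_{lk}=c_{li}c_{ij}$ holds in $\hc(K)$, so the generators of $\cl I_D$ lie in $\ker\Phi$. (Taking $l=i$ and using $c_{ii}=2$ recovers $c_{ik}=c_{ij}$, as expected since $j,k$ play symmetric roles.)

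The real content is Step 3: the Step 1 reduction descends to a well-defined map $\Psi\colon\hc(K)\to\cl A^{ab}_n/\cl I_D$ sending $[\gamma]$ to the class of any reduction of $\gamma$. Granting this, $\Psi\circ\Phi$ fixes every $c_{ij}$, so $\Phi$ is injective, hence an isomorphism. To show $\Psi$ is well-defined one checks: (a) \emph{confluence} — resolving the undercrossings of a fixed general-position cord in different orders gives the same class, which by a diamond-lemma argument reduces to a finite local check that the two rewrites produced by relation~(2) at two distinct crossings agree modulo $\cl I_D$; and (b) \emph{homotopy invariance} — any homotopy of a cord decomposes into a finite list of elementary moves (isotopy within general position; a finger move pushing $\gamma$ through a strand of $K$, creating or cancelling an adjacent over/under pair; a slide of $\gamma$ past a crossing of $K$; a slide of an endpoint along $K$, possibly across an undercrossing; triangle moves of the projection), and for each one verifies that the Step 1 reduction changes by an element of $\cl I_D$ — the finger move being essentially relation~(2), hence built into $\cl I_D$, and the slide past a crossing of $K$ invoking the $\cl I_D$-relation at that crossing. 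Relations~(1) and~(2) of $\hc(K)$ are then automatically respected: (1) because $c_{ii}=2$ is built in, and (2) because, by Steps 1 and~3, every cord has a canonical reduction modulo $\cl I_D$ and (2) is the only skein move used to produce it.

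I expect Step 3(b) to be the main obstacle: making precise the list of elementary moves of a cord relative to a fixed knot diagram and carrying out the finite but fiddly local verifications — essentially a small "Reidemeister calculus for cords," together with the diamond lemma of 3(a). A cleaner alternative, and probably the route of \cite{Ng05b}, is to first establish the intermediate description of $\hc(K)$ as the commutative ring on the underlying set of $\pi_1(E_K)$ modulo $[e]=2$, $[m\gamma]=-[\gamma]=[\gamma m]$, and $[\gamma_1\gamma_2]-[\gamma_1 m\gamma_2]=[\gamma_1][\gamma_2]$ (for $m$ a meridian), and then deduce Theorem~\ref{thm-knotdiagram} from a Wirtinger presentation $\langle m_1,\dots,m_n\mid r_1,\dots,r_n\rangle$: one identifies $c_{ij}$ with $[m_im_j^{-1}]$ (after the sign normalization), uses the three relations to rewrite $[w]$ for an arbitrary word $w$ in the $m_a^{\pm1}$ as a polynomial in the $c_{ij}$, and the obstacle is relocated to showing that the Wirtinger relations impose exactly the relations of $\cl I_D$ and nothing more — a normal-form argument on meridian words.
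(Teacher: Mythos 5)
There is a point of order first: the paper does not prove this statement at all --- it is quoted from \cite{Ng05b} --- so your attempt can only be measured against the literature, not against an in-paper argument. On its own terms, your Steps 1 and 2 are fine: the reduction of an arbitrary cord to a polynomial in the $c_{ij}$ by induction on (under)crossings, and the local application of relation (2) of Figure \ref{fig-cordrelations} at a crossing $(i,j,k)$ to verify $c_{lj}+c_{lk}=c_{li}c_{ij}$ in $\hc(K)$, are the standard arguments and give a well-defined surjection $\cl A^{ab}_n/\cl I_D\to\hc(K)$. The genuine gap is Step 3, which you yourself identify as ``the real content'': it is a plan, not a proof. The well-definedness of $\Psi$ --- equivalently, the assertion that every relation among diagram cords valid in $\hc(K)$ already lies in $\cl I_D$ --- is reduced to (a) a confluence/diamond-lemma check and (b) homotopy invariance of the reduction under a list of elementary moves, but the list is never made precise and none of the local verifications is carried out; these verifications are exactly where the theorem lives. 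Moreover, the closing claim that relations (1) and (2) are then ``automatically respected'' is circular: to descend to $\hc(K)$, $\Psi$ must annihilate an \emph{arbitrary} instance of relation (2), applied at any point where any cord in general position meets any strand, not merely the instances occurring in your chosen reduction scheme; showing this is again precisely the confluence statement you have not established. As written, the argument proves surjectivity of $\Phi$ but not injectivity.

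Your ``cleaner alternative'' --- passing through the description of $\hc(K)$ by the underlying set of the knot group with $[e]=2$, $[m\gamma]=-[\gamma]=[\gamma m]$, $[\gamma_1\gamma_2]-[\gamma_1m\gamma_2]=[\gamma_1][\gamma_2]$, identifying $c_{ij}$ with $[m_im_j^{-1}]$ for Wirtinger meridians and then matching the Wirtinger relations with the generators (\ref{eqn-diagram-defneqns}) --- is indeed much closer to how this presentation is actually obtained in Ng's papers, and it localizes the combinatorics to group elements rather than to ambient homotopies of cords, which is why it is more tractable. But in your write-up this route is also only sketched, with the decisive normal-form argument (that the Wirtinger relations impose exactly $\cl I_D$ and nothing more) explicitly deferred. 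So the proposal, in either incarnation, stops short of a proof of the isomorphism.
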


Unless otherwise stated, we assume a diagram $D$ of $K$ has been chosen and will identify $\hc(K)$ with $\cl A^{ab}_n / \cl I_D$. Note that $c_{ij}=c_{ik}$ in $\hc(K)$ if $(i,j,k)$ is a crossing (the realizations as cords are also clearly homotopic). Below we will need the following notation. Given an $n$-crossing diagram $D$, let $C_D = \left[ c_{ij}\right]$ be the $n\times n$ matrix over $\hc(K)$, identifying $c_{ji}=c_{ij},\ 1\le i<j\le n$, and $c_{ii}=2$.

\subsection{Augmentations}
\label{sec-HCaugmentations}

An \textit{augmentation} of $K$ is a ring homomorphism $\ve\in\text{Hom}(HC_0(K),\bb C)$. In a more general setting augmentations are defined in the category of differential graded algebras, however $\text{Hom}(HC_0(K),\bb C)$ will suffice for our purposes.

\begin{defn}We call an augmentation $\ve\in\text{Hom}(HC_0(K),\bb C)$ \textit{reflective} if it factors through $\hc(K)$.
\label{defn-abaug}
\end{defn}

\begin{defn}Given a knot diagram $D$ and $\ve$ an augmentation, let $C_D(\ve)$ be the complex matrix obtained by applying $\ve$ to each entry of $C_D$, defined in the previous subsection. We define the \textit{rank} of $\ve$ to be the rank of $C_D(\ve)$.
\label{defn-augrank}
\end{defn}

As in the introduction, we write $\mathfrak r^{ab}(K)$ for the set of ranks of reflective augmentations for $K$. See the remarks following Theorem \ref{thm-correspondenceAugRep}.

\begin{example}
This example will be useful to us in Section \ref{sec-unitary}. Let $K_0$ be the knot represented by the labeled diagram on the left of Figure \ref{fig-firstexample}, which is the knot $5_2$ in knot tables. Homotopy on cords lets us see that $c_{12}=c_{13}=c_{35}=c_{45}$ and that $c_{14}=c_{24}=c_{25}$ in $\hc(K_0)$.
\begin{figure}[ht]
\begin{center}
\begin{tikzpicture}

\node[anchor=south west,inner sep=0] at (0,1) {\includegraphics[scale=0.5]{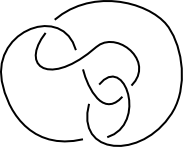}};
%%\draw[help lines] (0,0) grid (8,8);

\draw(1.5,0.5) node {$5_2$};

\draw(2.3,2.5) node {$1$};
\draw(0.3,1.6) node {$3$};
\draw(1.1,1.9) node {$4$};
\draw(1.6,2.7) node {$5$};
\draw(2,1.5) node {$2$};

\end{tikzpicture}
\end{center}
\caption{An example augmentation.}
\label{fig-firstexample}
\end{figure}

By applying equations (\ref{eqn-diagram-defneqns}) we obtain $c_{15}+2 = c_{13}^2$, $c_{34}+2 = c_{35}^2$, and $c_{23}+2 = c_{12}^2$, which by the previous equations imply that $c_{15}=c_{34}=c_{23}$. Also we have that $c_{13}+c_{14} = c_{15}c_{35}$ and $c_{23}+c_{24} = c_{25}c_{35}$. These equations imply that $c_{15} = c_{12}^2-2$, $c_{14} = c_{12}^3-3c_{12}$, and  $c_{15}=(c_{12}-1)c_{14}$. It turns out that these equations suffice and $\hc(K)$ is generated by one cord, say $c_{12}$. Using the relation $c_{12}^2-2=(c_{12}-1)c_{14}$ we obtain that $\hc(K)\cong\bb Z[c_{12}]\big/((c_{12}-2)(c_{12}^3+c_{12}^2-2c_{12}-1))$. Sending $c_{12}$ to any root of $x^3+x^2-2x-1$ determines a reflective augmentation of rank 2.

\label{example-1}
\end{example}

Choose any meridian $m$ in $\pi_1(E_K)$. Define $R_{\aug}(E_K,k)$ to be the subset of $\text{Hom}(\pi_1(E_K),\GL_k\bb C)$ consisting of those homomorphisms $\rho$ such that $\rho(m)$ is diagonalizable, and has 1 as an eigenvalue of multiplicity $k-1$. Let $R_{\aug}(E_K) = \cup_{k\ge1}R_{\aug}(E_K,k)$. The set $R_{\aug}(E_K)$ is closely related to augmentations of $K$. 

Let us note a few properties of $R_{\aug}(E_K)$. First, given $K$ there is an $n_K$ such that all irreducible elements in $R_{\aug}(E_K)$ are contained in the union $\cup_{k=1}^{n_K}R_{\aug}(E_K,k)$ and $n_K$ is at most the number of meridians needed to generate $\pi_1(E_K)$ \cite{Cor14}. Also, as with any $\GL_k\bb C$ representation of a group generated by conjugates of one element, a choice of $k^{th}$ root of $\det(\rho(m))$ makes each $\rho\in R_{\aug}(E_K,k)$ a 1-dimensional representation times an element of $\text{Hom}(\pi_1(E_K),\SL_k\bb C)$.

Given $\rho\in R_{\aug}(E_K)$ we can construct an augmentation $\ve_\rho\co HC_0(K)\to\bb C$ \cite[Section 5]{Ng14}. For augmentations with $\ve(\mu)\ne 1$ this correspondence is 1-1 on equivalence classes of irreducible elements of $R_{\aug}(E_K)$.

\begin{thm}[\cite{Cor14b}]If $\ve\co HC_0(K)\to\bb C$ is an augmentation and $\ve(\mu)\ne1$ there is some $k>0$ and an irreducible $\rho\in R_{\aug}(E_K,k)$ with $\ve_\rho = \ve$. Any irreducible $\rho'\in R_{\aug}(E_K)$ with $\ve_{\rho'} = \ve$ is equivalent to $\rho$. Moreover, if $D$ is a knot diagram of $K$ then $k = \text{rank}(\ve_\rho) = \text{rank}(C_D(\ve_\rho))$ for any irreducible $\rho\in R_{\aug}(E_K,k)$.
\label{thm-correspondenceAugRep}
\end{thm}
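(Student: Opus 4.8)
The plan is to piece together results already available in the literature and to supply the one new ingredient — the rank statement — via the diagram description of $\hc(K)$. Since the statement is attributed to \cite{Cor14b}, I expect the bulk of the first two claims (existence of an irreducible $\rho\in R_{\aug}(E_K,k)$ with $\ve_\rho=\ve$ when $\ve(\mu)\ne 1$, and uniqueness up to equivalence) to be quoted directly from that paper; the content to be reconstructed here is the identification of the integer $k$ both with the abstract rank of $\ve$ (as defined via $C_D$) and with the group-theoretic rank of any irreducible representation inducing it. So the first step is to recall the construction of $\ve_\rho$ from \cite[Section 5]{Ng14}: given $\rho\in R_{\aug}(E_K,k)$, one evaluates $\rho$ on the cords, and the key point is that a cord $c_{ij}$ running over components $i$ and $j$ is sent to (a normalization of) $\tr\bigl(\rho(g_i)\,P\,\rho(g_j)^{-1}\bigr)$-type expressions, where $g_i,g_j$ are the meridians of components $i,j$ and $P$ is the rank-one projection onto the $(-1)$- or nontrivial-eigenspace complement determined by $\rho(m)$. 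Concretely, writing $\rho(m) = I - (1-\mu)\,vw^{T}$ for a rank-one piece (after the scaling that turns $\rho$ into an $\SL_k$ representation times a character), each $c_{ij}$ becomes $w^{T}\rho(g_i)^{-1}\rho(g_j)v$ up to a unit, so that the matrix $C_D(\ve_\rho)$ factors as a product of a $n\times k$ matrix (rows $w^{T}\rho(g_i)^{-1}$) with a $k\times n$ matrix (columns $\rho(g_j)v$).

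The second, and main, step is to extract the rank from that factorization. Once $C_D(\ve_\rho)$ is written as $AB$ with $A$ an $n\times k$ matrix and $B$ a $k\times n$ matrix, its rank is automatically $\le k$. For the reverse inequality one must show that, for an \emph{irreducible} $\rho\in R_{\aug}(E_K,k)$, the rows $w^{T}\rho(g_i)^{-1}$ span all of $(\bb C^k)^{*}$ and the columns $\rho(g_j)v$ span $\bb C^k$. This is exactly where irreducibility is used: the span of $\{\rho(g_j)v\}$ is a $\rho(\pi_1 E_K)$-invariant subspace containing $v$ (since the $g_j$ generate and $\rho(m)v$ is proportional to $v$), hence is everything by irreducibility; dually for the left span. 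Therefore $\rank C_D(\ve_\rho) = k$, and since the left-hand side depends only on $\ve_\rho$, any two irreducible representations inducing the same augmentation live in $R_{\aug}(E_K,k)$ for the same $k$ — which is also forced by the uniqueness-up-to-equivalence clause from \cite{Cor14b}, giving internal consistency.

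For the existence statement itself (first sentence), I would cite \cite{Cor14b}: starting from an augmentation $\ve$ with $\ve(\mu)\ne 1$, one reverses the above construction. The matrix $C_D(\ve)$ over $\bb C$ has some rank $k$, and one produces vectors $v, w$ and matrices $\rho(g_i)\in\GL_k\bb C$ realizing the factorization while satisfying the Wirtinger relations — the crossing relations \eqref{eqn-diagram-defneqns} being precisely the shadow of the Wirtinger relations under this dictionary — and checks that the resulting $\rho$ lies in $R_{\aug}(E_K,k)$ and is irreducible (again because the rank of $C_D(\ve)$ is exactly $k$, there is no proper invariant subspace). The finiteness input $n_K$ and the scaling remark recorded just before the theorem guarantee that $k$ is bounded and that passing between $\GL_k$ and $\SL_k$ causes no trouble.

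\emph{Main obstacle.} The delicate point is the bookkeeping in the factorization $C_D(\ve_\rho) = AB$: getting the normalizations right so that the entries are literally $\ve_\rho(c_{ij})$ and not merely proportional to them, and verifying that the crossing relations \eqref{eqn-diagram-defneqns} correspond exactly (not just up to the reflective quotient) to the relations imposed by the construction of $\ve_\rho$ in \cite{Ng14}. The rank equality itself, once the factorization is in hand, is a short linear-algebra argument powered by irreducibility, as sketched above.
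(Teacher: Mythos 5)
This theorem is quoted in the paper from \cite{Cor14b} with no proof supplied, so there is nothing internal to compare against; your plan of citing \cite{Cor14b} for existence and uniqueness and recovering the rank statement from the factorization of $C_D(\ve_\rho)$ as an $n\times k$ times $k\times n$ product, with irreducibility forcing the row and column spans to be full, is exactly the mechanism of the cited source and is how the paper itself later uses the correspondence (e.g.\ in Proposition \ref{prop-generatebyfewer}). Your sketch is therefore correct and consistent with the intended proof; the only point to tighten is that invariance of the span of the vectors $\rho(\gamma_j)^{\pm1}v$ comes from the rank-one structure of every meridian image $\rho(m_j)=\mathrm{Id}+(\ve(\mu)-1)\rho(\gamma_j)^{-1}v\,w^{T}\rho(\gamma_j)$, not merely from $\rho(m)v$ being proportional to $v$.
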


As a relevant aside, for any non-trivial knot $K$ work of Kronheimer and Mrowka shows there is an irreducible representation $\sg\co \pi_1(E_K) \to SU(2)$ with $\sg(m)$ a diagonal matrix with eigenvalues $\pm\sqrt{-1}$ \cite[Corollary 7.17]{KM}. Hence, upon multiplying by the 1-dimensional map determined by $m\mapsto \sqrt{-1}$, every non-trivial knot $K$ has an irreducible representation in $R_{\aug}(E_K,2)$ with $\det(\rho(m)) = -1$, so inducing a rank 2 augmentation. However, it may be that $2\not\in\mathfrak r^{ab}(K)$, i.e.\ there are knots with no rank 2 \textit{reflective} augmentation (see Corollary \ref{cor-Det1}).

Recall that if a matrix with rank $r$ is diagonalizable, then by considering the characteristic polynomial we get a principal submatrix of size $r$ which has non-zero determinant. This shows that reflective augmentations have the following useful property.
\begin{lem}Suppose that the rank of $\ve\co \hc(K)\to\bb C$ is $r$. Then for any knot diagram $D$ of $K$ there is a principal submatrix of $C_D(\ve)$ of size $r$ with non-zero determinant.
\label{lem-principalminors}
\end{lem}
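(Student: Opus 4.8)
The plan is to split the statement into two independent pieces: an elementary linear-algebra fact about diagonalizable matrices, and the (less obvious) assertion that $C_D(\ve)$ is diagonalizable. Fix an arbitrary diagram $D$ and write $N:=C_D(\ve)$; this is a symmetric $n\times n$ complex matrix with $2$'s on the diagonal and $\text{rank}(N)=r$. Granting that $N$ is diagonalizable, the conclusion is immediate from the following fact, which is what the paper recalls just before the lemma: \emph{a diagonalizable complex matrix of rank $r$ has a principal submatrix of size $r$ with non-zero determinant}. I would prove this fact by noting that diagonalizability together with $\text{rank}(N)=r$ forces $0$ to be an eigenvalue of $N$ whose algebraic multiplicity equals its geometric multiplicity $n-r$, while the remaining $r$ eigenvalues $\lambda_1,\dots,\lambda_r$ are nonzero. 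Hence $\det(tI-N)=t^{\,n-r}\prod_i(t-\lambda_i)$, so the coefficient of $t^{\,n-r}$ in $\det(tI-N)$ is $(-1)^r\lambda_1\cdots\lambda_r\neq 0$; but for any $n\times n$ matrix that coefficient equals $(-1)^r$ times the sum of the $r\times r$ principal minors of $N$, so at least one $r\times r$ principal minor is nonzero. (Only the weaker hypothesis $\text{rank}(N)=\text{rank}(N^2)$ is really used, since it already pins the algebraic multiplicity of the eigenvalue $0$ at $n-r$; I flag this because it may be the more convenient thing to verify.)

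It remains to establish that $N=C_D(\ve)$ is diagonalizable, and for this I would pass to a representation. Since $\ve$ is reflective it factors through $\hc(K)$, so $\ve(\mu)\neq 1$ (indeed $\ve(\mu)=-1$); by Theorem~\ref{thm-correspondenceAugRep} there is then an irreducible $\rho\in R_{\aug}(E_K,k)$ with $\ve_\rho=\ve$ and $k=\text{rank}(C_D(\ve))=r$. Writing each Wirtinger meridian as $m_i=g_i m g_i^{-1}$, one has $\rho(m_i)=I+(\mu-1)P_i$ with $P_i$ a rank-one idempotent, and the construction of $\ve_\rho$ from $\rho$ (see \cite[Section 5]{Ng14}, \cite{Cor14b}) expresses each value $\ve(c_{ij})$ as a fixed bilinear expression in the $P_i$ and the matrices $\rho(g_i)$. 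Reading $C_D(\ve)$ off from this description, I would verify that it is diagonalizable of rank $r$ — equivalently that $\text{rank}\,C_D(\ve)=\text{rank}\,C_D(\ve)^2$, i.e.\ that the $0$-eigenspace is all of $\ker C_D(\ve)$ — by using that $\rho$ is irreducible (so that the relevant families of vectors span) together with the fact that each $\rho(m_i)$ is the identity plus a scalar multiple of a rank-one idempotent.

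The hard part will be exactly this last step. Diagonalizability is not automatic: a symmetric $n\times n$ complex matrix of rank $r$ can be nilpotent — already in size two the symmetric matrix with rows $(1,i)$ and $(i,-1)$ has rank one and square zero — so the linear-algebra reduction cannot be bypassed, and the bilinear formula for the $\ve(c_{ij})$ does not display diagonalizability on its face. The real content of the lemma is thus the use of the special structure of representations in $R_{\aug}(E_K,k)$ to exclude such nilpotent behaviour; once that is in hand, everything else is routine bookkeeping.
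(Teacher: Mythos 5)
Your proposal correctly isolates a sufficient linear-algebra fact (a similarity-diagonalizable matrix of rank $r$ has a non-vanishing $r\times r$ principal minor, read off from the coefficient of $t^{\,n-r}$ in the characteristic polynomial), and your observation that a complex \emph{symmetric} matrix need not be similarity-diagonalizable — e.g.\ $\left(\begin{smallmatrix}1 & i\\ i & -1\end{smallmatrix}\right)$ is symmetric, rank one, and nilpotent — is a legitimate objection to the paper's own one-line proof, which argues exactly ``$C_D(\ve)$ is symmetric and therefore can be diagonalized'' and then invokes the characteristic-polynomial remark preceding the lemma. However, as written your argument is incomplete: the entire burden is shifted onto the claim that $C_D(\ve)$ is diagonalizable, which you explicitly defer (``the hard part will be exactly this last step'') and never establish. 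A proof that ends by announcing that its key step remains to be verified is a gap, not a proof.

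More importantly, the detour through $R_{\aug}(E_K,k)$ is unnecessary, because the conclusion already follows from the symmetry of $C_D(\ve)$ alone — just not via the characteristic polynomial. Over $\bb C$ every symmetric bilinear form is diagonalizable by \emph{congruence}: writing $N=C_D(\ve)$, reflectivity gives $\ve(c_{ij})=\ve(c_{ji})$, so $N=N^T$ and hence $N=Q^T\,\mathrm{diag}(1,\dots,1,0,\dots,0)\,Q$ with $r$ ones and $Q$ invertible; equivalently $N=M^TM$ for the $r\times n$ matrix $M$ of rank $r$ formed by the first $r$ rows of $Q$. For an index set $S$ with $|S|=r$, the principal submatrix is $N_{S,S}=(M_{\cdot,S})^T M_{\cdot,S}$, whose determinant equals $\det(M_{\cdot,S})^2$; since $M$ has rank $r$, some $r$ columns of $M$ are independent and that determinant is nonzero. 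Your own counterexample is consistent with this: $\left(\begin{smallmatrix}1 & i\\ i & -1\end{smallmatrix}\right)$ has the nonzero $1\times1$ principal minor $1$. So the ``real content'' of the lemma is not representation-theoretic at all: reflectivity forces symmetry, and symmetry — via congruence rather than similarity — forces the existence of the required principal minor.
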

\begin{proof}As $\ve$ is reflective it satisfies $\ve(c_{ij}) = \ve(c_{ji})$ and $\ve(\mu) = -1$. Thus $C_D(\ve)$ is symmetric and therefore can be diagonalized.
%%whose square are the eigenvalues of the Hermitian non-negative $(C_D(\ve))^*C_D(\ve)$ (see e.g.\ [Horn-Johnson, Matrix Analysis]).
\end{proof}

%%%%%%%%%%%%%%%Look more at this??%%%%%%%%%%%%%%%%%%%%
%If each non-zero eigenvalue of $C_D(\ve)$ is positive then for each $r'<r$ there is a principal submatrix of $C_D(\ve)$ of size $r'$ with non-zero determinant. There exist knots with reflective augmentations where $C_D(\ve)$ is indefinite. (Though some computational evidence suggests that $C_D(\ve)$ might be definite when the obstructions to unitarity discussed in the last section are passed.)

\begin{lem}Given an $n$-crossing diagram $D$, if $\ve$ is a reflective augmentation such that $\ve(c_{rs}) = \pm2$ for all $1\le r<s\le n$, then $\ve(c_{rs}) = 2$ for all $1\le r<s\le n$, and so $\text{rank}(\ve) = 1$.
\label{lem-rank1aug}
\end{lem}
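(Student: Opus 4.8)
The plan is to read the defining relations (\ref{eqn-diagram-defneqns}) through $\ve$ and exploit that a sum of two numbers in $\{+2,-2\}$ can never equal a product of two such numbers unless the two summands coincide. Concretely, for every crossing $(i,j,k)$ of $D$ and every $1\le l\le n$ one has $\ve(c_{lj})+\ve(c_{lk})=\ve(c_{li})\ve(c_{ij})$, with the conventions $c_{ll}=2$, $c_{ml}=c_{lm}$. When $l\notin\{i,j,k\}$ every term is $\pm2$, so the right-hand side is $\pm4$ and the left-hand side cannot vanish; hence $\ve(c_{lj})=\ve(c_{lk})$. I would then record what the remaining values of $l$ give: $l=i$ turns the relation into $\ve(c_{ij})+\ve(c_{ik})=2\ve(c_{ij})$, so again $\ve(c_{ij})=\ve(c_{ik})$; and $l=j$ turns it into $2+\ve(c_{jk})=\ve(c_{ij})^2=4$ (using $i\neq j$), forcing $\ve(c_{jk})=2$, with $l=k$ symmetric. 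So the first step yields: for each crossing $(i,j,k)$ and each $l$, $\ve(c_{lj})=\ve(c_{lk})$, and moreover $\ve(c_{jk})=2$ for the two undercrossing strands.

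Next I would feed in the combinatorial structure of $D$. Running once around $K$ meets its $n$ arcs in a cyclic order, and each crossing is the undercrossing sitting between one consecutive pair of arcs, which are exactly its two undercrossing strands; thus the graph on $\{1,\dots,n\}$ with an edge joining the two undercrossing strands of each crossing is a single $n$-cycle (single because $K$ is connected). Now fix $l$. Deleting the vertex $l$ from this cycle leaves a path through all the other arcs, and along each of its edges $\{j,k\}$ the first step gives $\ve(c_{lj})=\ve(c_{lk})$; hence $\ve(c_{lj})$ takes a single common value over all $j\neq l$. Applying instead the ``$\ve(c_{jk})=2$'' output at the two crossings flanking $l$ in the cycle identifies that common value as $2$. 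Since $l$ was arbitrary, $\ve(c_{rs})=2$ for all $r\neq s$, whence $C_D(\ve)$ is twice the all-ones matrix and $\text{rank}(\ve)=1$.

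I expect the only real subtlety to be conceptual rather than computational: the statement demands that every entry be $2$, which is strictly stronger than $\text{rank}(\ve)=1$ — a symmetric matrix with $2$'s on the diagonal and $\pm2$'s off it can have rank $1$ without being the all-ones matrix. What closes this gap is the global input that the undercrossing strands of a crossing are \emph{cyclically consecutive} arcs of the connected curve $K$, so the relevant graph is one cycle and the ``$\ve(c_{jk})=2$'' relations propagate around it; stating and using this correctly is the point I would be most careful about, together with the routine remark that a nugatory crossing, where two of $i,j,k$ coincide, is absorbed by the same casework or sidestepped by first passing to a reduced diagram.
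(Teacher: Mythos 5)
Your proof is correct and uses essentially the same mechanism as the paper's: the crossing relation forces $\ve(c_{lj})=\ve(c_{lk})$ across each undercrossing pair, and propagating this around the cyclic sequence of arcs pins every entry to $2$. The paper runs the identical propagation in the contrapositive (a single $-2$ entry spreads around the knot until it collides with $\ve(c_{ii})=2$), whereas you argue directly and pin the common value via the crossings flanking $l$; the difference is only one of framing.
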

\begin{proof}Let $i$ be a label on a component of $D$ and suppose $\ve_{ij} = -2$ for some $1\le j\le n$. Choose a crossing at which $j$ is undercrossing, and let $j'$ be the other undercrossing strand. As $\ve(c_{rs})\pm2$ for all $r<s$, by the equations (\ref{eqn-diagram-defneqns}) it must be that $\ve(c_{ij'}) = -2$ also. Moving along the knot in $D$ we have $\ve(c_{is}) = -2$ for all $1\le s\le n$. In particular, $2 = \ve(c_{ii}) = -2$, a contradiction.
\end{proof}

\begin{prop}Suppose $\ve$ is a reflective augmentation of rank at least two, where $D$ has $n$ crossings. There is a labeling of $D$ so that for $2<k\le n$ we may find matrices $A_2,A_k\in\SL_2\bb C$ so that, upon setting $A_1$ to the identity, we get $\tr(A_rA_s^{-1}) = \ve(c_{rs})$ for $r,s \in \{1,2,k\}$. The trace of the commutator $[A_2,A_k]$ is 2 if and only if the principal submatrix in $C_D(\ve)$ corresponding to rows $1,2,k$ has zero determinant. Moreover, for $1\le l\le n$ there is $A_l\in\SL_2\bb C$ so that $\tr(A_rA_s^{-1}) = \ve(c_{rs})$ for $r,s \in\{1,2,k,l\}$ if either $\tr([A_2,A_k])\ne 2$ or the principal submatrix of $C_D(\ve)$ corresponding to rows $1,2,l$ has zero determinant. 
\label{prop-augtotrace}
\end{prop}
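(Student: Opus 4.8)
The plan is to build the matrices $A_2,A_k$ and the extensions $A_l$ by hand and to read off the two equivalences from short trace computations, treating Part~3 in an irreducible and a reducible case. For the labeling: since $\ve$ has rank at least two, Lemma~\ref{lem-rank1aug} gives a pair of components with $\ve$-value different from $\pm2$, so I relabel $D$ so that this pair is $\{1,2\}$ and write $a=\ve(c_{12})\ne\pm2$. Put $A_1=\text{Id}$, fix $2<k\le n$, and set $b=\ve(c_{1k})$, $c=\ve(c_{2k})$. As the $\SL_2\bb C$ character variety of the free group on two generators is all of $\bb C^3$ (the two traces and the trace of the product), one chooses $A_2,A_k\in\SL_2\bb C$ with $\tr A_2=a$, $\tr A_k=b$, $\tr(A_2A_k^{-1})=c$ (apply this to $A_2,A_k^{-1}$, using $\tr A_k^{-1}=\tr A_k$). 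With $A_1=\text{Id}$ this yields $\tr(A_rA_s^{-1})=\ve(c_{rs})$ for all $r,s\in\{1,2,k\}$ (diagonal pairs give $\tr(\text{Id})=2=\ve(c_{rr})$, and $\tr(A_sA_r^{-1})=\tr(A_rA_s^{-1})$). I keep one piece of freedom for later: when the $\{1,2,k\}$ principal minor of $C_D(\ve)$ vanishes, I take $A_2=\text{diag}(\alpha,\alpha^{-1})$ with $\alpha$ a root of $t^2-at+1$ (so $\alpha\ne\pm1$) and $A_k$ upper triangular with diagonal $(\gamma,\gamma^{-1})$ and \emph{nonzero} upper-right entry $\delta$, where $\gamma$ is chosen among the roots of $t^2-bt+1$ so that $\alpha\gamma^{-1}+\alpha^{-1}\gamma=c$; this is possible because those two values of $\alpha\gamma^{-1}+\alpha^{-1}\gamma$ are the roots of $z^2-abz+(a^2+b^2-4)$, and $c$ is one of them exactly when $a^2+b^2+c^2-abc-4=0$.

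For Part~2: from $\tr(A_2A_k)=\tr A_2\,\tr A_k-\tr(A_2A_k^{-1})=ab-c$ and the commutator identity $\tr[X,Y]=(\tr X)^2+(\tr Y)^2+(\tr XY)^2-\tr X\,\tr Y\,\tr XY-2$ recalled in Section~\ref{SecBGCharVar}, one gets $\tr[A_2,A_k]=a^2+b^2+c^2-abc-2$. The principal submatrix of $C_D(\ve)$ on rows and columns $1,2,k$ has determinant $8-2a^2-2b^2-2c^2+2abc=-2(a^2+b^2+c^2-abc-4)$, so $\tr[A_2,A_k]-2$ is $-\tfrac12$ of it; that is the second assertion. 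The same identity applied to $\{1,2,l\}$ shows the $\{1,2,l\}$ principal minor of $C_D(\ve)$ vanishes iff $a^2+d^2+e^2-ade-4=0$, where $d=\ve(c_{1l})$, $e=\ve(c_{2l})$; I use this in Part~3.

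For Part~3, first suppose $\tr[A_2,A_k]\ne2$. Then $A_2,A_k$ have no common eigenvector, so $\{\text{Id},A_2,A_k,A_2A_k\}$ is a basis of $M_2(\bb C)$; equivalently $\psi\co Z\mapsto(\tr Z,\tr(A_2Z),\tr(A_kZ),\tr(A_2A_kZ))$ is a linear isomorphism $M_2(\bb C)\to\bb C^4$. Put $f=\ve(c_{kl})$; I want $A_l\in\SL_2\bb C$ with $\tr A_l=d$, $\tr(A_2A_l^{-1})=e$, $\tr(A_kA_l^{-1})=f$, i.e.\ (with $Z=A_l^{-1}$) $\psi(Z)=(d,e,f,u)$ for some $u\in\bb C$. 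For each $u$ there is a unique $Z=Z_0+uW$ with $W=\psi^{-1}(0,0,0,1)$, and $W$ is invertible: $W\ne0$, and if $W=xy^\top$ had rank one then $y^\top x=y^\top A_2x=y^\top A_kx=0$ would make $x$ a common eigenvector of $A_2,A_k$ (else $x,A_2x$ span $\bb C^2$, so $y=0$, contradicting $y^\top A_2A_kx=1$). Hence $\det(Z_0+uW)$ is a genuine quadratic in $u$, equal to $1$ for some $u\in\bb C$; take that $Z$ and set $A_l=Z^{-1}$.

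Now suppose instead $\tr[A_2,A_k]=2$ and the $\{1,2,l\}$ minor vanishes; use the representatives from Part~1, so $A_2=\text{diag}(\alpha,\alpha^{-1})$ with $\alpha\ne\pm1$ and $A_k$ upper triangular with diagonal $(\gamma,\gamma^{-1})$ and $\delta\ne0$. Seeking $A_l$ with rows $(p,q),(r,s)$, the equations $\tr A_l=p+s=d$ and $\tr(A_2A_l^{-1})=\alpha s+\alpha^{-1}p=e$ determine $(p,s)$ uniquely, and a one-line computation gives $ps=(ade-d^2-e^2)/(a^2-4)$; by Part~2 the hypothesis on the $\{1,2,l\}$ minor is exactly $a^2+d^2+e^2-ade-4=0$, that is $ps=1$, so $qr=ps-1=0$. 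Taking $q=0$ and $r=(\gamma s+\gamma^{-1}p-f)/\delta$ gives $A_l\in\SL_2\bb C$ with $\tr A_l=d$, $\tr(A_2A_l^{-1})=e$, $\tr(A_kA_l^{-1})=\gamma s+\gamma^{-1}p-\delta r=f$. In both cases $\tr(A_rA_s^{-1})=\ve(c_{rs})$ for all $r,s\in\{1,2,k,l\}$, the pairs inside $\{1,2,k\}$ unchanged and the diagonal and symmetric pairs automatic. I expect the main obstacle to be exactly this reducible case: one has to spend the freedom from Part~1 to put $A_k$ in triangular form with $\delta\ne0$, and then the clean identity $ps=(ade-d^2-e^2)/(a^2-4)$ is what makes the diagonal part of $A_l$ invertible precisely when the $\{1,2,l\}$ minor vanishes, leaving the last trace condition solvable by a free choice of $r$; the irreducible case and Part~2 are routine once the relevant basis and identity are in hand.
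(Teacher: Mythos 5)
Your proof is correct, and the skeleton matches the paper's (relabel via Lemma \ref{lem-rank1aug}, realize the $3\times3$ trace data, identify the commutator trace with the principal minor, then extend to $A_l$ by solving a quadratic in one remaining free parameter, with the reducible case rescued by the vanishing of the $\{1,2,l\}$ minor). The mechanism for the extension step, however, is genuinely different. The paper writes completely explicit matrices ($A_2$ diagonal with entries $d,\ve_{12}-d$, $A_k$ and $A_l$ with prescribed entries $a$, $a_l$, $c_l$), reduces $\det(A_l)=1$ to a quadratic in the single entry $b_l$, and computes that its leading coefficient is $a(\ve_{1k}-a)-1$, which vanishes exactly when $\tr([A_2,A_k])=2$; in the degenerate case its $A_k$ is automatically upper triangular and one sets $b_l=0$. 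You instead get $A_2,A_k$ abstractly from Fricke--Vogt surjectivity, and in the irreducible case solve for $Z=A_l^{-1}$ using the nondegenerate trace pairing against the basis $\{\mathrm{Id},A_2,A_k,A_2A_k\}$ of $M_2(\bb C)$, with the rank-one argument for $W$ playing the role of the paper's leading-coefficient computation; your identity $ps=(ade-d^2-e^2)/(a^2-4)$ in the triangular case is exactly the paper's $a_l(\ve_{1l}-a_l)=1$. Your route is cleaner and more conceptual, but note what the paper's explicitness buys downstream: the concrete entries $a$, $a_l$, $b_l$, $c_l$ and the quantity $\alpha=1-a(\ve_{1k}-a)$ from this construction are reused verbatim in Lemma \ref{lem-alpha} and in the proofs of Theorems \ref{thm-unitary2} and \ref{thm-SL2R}, so a non-constructive existence statement in the irreducible case would have to be supplemented there.
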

\begin{proof}
Let us abbreviate $\ve_{rs}:= \ve(c_{rs})$ for $1\le r<s \le n$. Since $\ve$ does not have rank one there is some $i<j$ such that $\ve_{ij} \ne \pm2$ (Lemma \ref{lem-rank1aug}) and we may assume $i=1, j=2$ by a relabeling of $D$. Setting $d$ to a root of $d^2 - \ve_{12}d + 1 = 0$ and $a = \frac{\ve_{2k} - \ve_{1k}d}{\ve_{12} - 2d}$ (note that $\ve_{12} \ne 2d$ as $\ve_{12}\ne\pm 2$), we take
			\[A_2 = \begin{pmatrix}d & 0 \\ 0 & \ve_{12}-d\end{pmatrix} \quad\text{and}\quad A_k = \begin{pmatrix}a & 1 \\ a(\ve_{1k}-a) - 1 & \ve_{1k} - a\end{pmatrix}.\]
By computation, $\tr(A_kA_2^{-1}) = \ve_{2k}$. 

Note that $\tr([A_2,A_k]) = \tr(A_2A_kA_2^{-1}A_k^{-1}) = \tr(A_kA_2A_k^{-1}A_2^{-1}) = \tr([A_2,A_k^{-1}])$ in $\SL_2\bb C$. Hence
		\[\tr([A_2,A_k]) = \tr(A_2)^2 + \tr(A_k^{-1})^2 + \tr(A_2A_k^{-1})^2 - \tr(A_2)\tr(A_k^{-1})\tr(A_2A_k^{-1}) - 2 = \ve_{12}^2 + \ve_{1k}^2 + \ve_{2k}^2 - \ve_{12}\ve_{1k}\ve_{2k} - 2.\]
On the otherhand,
				\[\left|\begin{matrix}2 & \ve_{12} & \ve_{1k} \\ \ve_{12} & 2 & \ve_{2k} \\ \ve_{1k} & \ve_{2k} & 2\end{matrix}\right| = -2(\ve_{12}^2+\ve_{1k}^2+\ve_{2k}^2 - \ve_{12}\ve_{1k}\ve_{2k} - 4),\]
and so this principal submatrix of $C_D(\ve)$ is singular if and only if $\tr([A_2,A_k]) = 2$.

Choose $1\le l\le n$. First suppose that $\tr([A_2,A_k]) \ne 2$. Define $a_l = \frac{\ve_{2l} - \ve_{1l}d}{\ve_{12} - 2d}$. For a choice of $b_l,c_l$ that will soon be made, we set
			\begin{equation}A_l = \begin{pmatrix}a_l & b_l \\ c_l & \ve_{1l} - a_l\end{pmatrix}.
			\label{eqn-forAl}
			\end{equation}
Note that
	\al{
	\tr(A_2\begin{pmatrix}\ve_{1l} - a_l & -b_l \\ -c_l & a_l\end{pmatrix}) 	&= d\ve_{1l} - d\frac{\ve_{2l} - \ve_{1l}d}{\ve_{12} - 2d} + (\ve_{12} - d)\frac{\ve_{2l} - \ve_{1l}d}{\ve_{12} - 2d} \\
																		&= \ve_{2l}.
	}
Also
	\al{
	\tr(A_k\begin{pmatrix}\ve_{1l} - a_l & -b_l \\ -c_l & a_l\end{pmatrix}) 	&= a(\ve_{1l} - a_l) - c_l + b_l - b_la(\ve_{1k}-a) + (\ve_{1k} - a)a_l
	}
so we set $c_l = a(\ve_{1l} - a_l) - \ve_{kl} + b_l - b_la(\ve_{1k}-a) + (\ve_{1k} - a)a_l$ to obtain $\tr(A_kA_l^{-1}) = \ve_{kl}$. Furthermore we can take $A_l$ to be in $\SL_2\bb C$ as the equation $\det(A_l)=1$ is now quadratic in $b_l$ and $a(\ve_{1k}-a) - 1$ is the coefficient of $b_l^2$. If $a(\ve_{1k}-a) - 1$ were zero, then 
	\[\ve_{2k}\ve_{1k}\ve_{12}-\ve_{2k}^2-\ve_{1k}^2(\ve_{12}d-d^2)	= (\ve_{2k} - \ve_{1k}d)(\ve_{1k}(\ve_{12}-2d) - (\ve_{2k} - \ve_{1k}d)) = (\ve_{12}-2d)^2 = \ve_{12}^2 - 4.\]
This equality does not hold since $\tr([A_2,A_k]) \ne 2$.

On the other hand, if $\tr([A_2,A_k]) = 2$ then suppose the principal submatrix corresponding to rows $1,2,$ and $l$ has zero determinant. That is
		\begin{equation}\ve_{12}^2 + \ve_{1l}^2 + \ve_{2l}^2 - \ve_{12}\ve_{1l}\ve_{2l} - 4 = 0.
		\label{eqn-anotherminor}
		\end{equation}
Again define $A_l$ by (\ref{eqn-forAl}), choosing $c_l$ as before. Now, since $\tr([A_2,A_k]) = 2$ we have seen that the coefficient of $b_l^2$ in $\det(A_l) = 1$ is zero. It suffices, though, to note that upon setting $b_l=0$ we have 
	\[\det(A_l) = a_l(\ve_{1l} - a_l) = \frac{\ve_{12}\ve_{1l}\ve_{2l} - \ve_{1l}^2 - \ve_{2l}^2}{\ve_{12}^2 - 4} = 1,\]
where the last equality holds by (\ref{eqn-anotherminor}).
\end{proof}

\begin{rem}We note it is the case that, taking $\ve_{ii}=2$ for every $1\le i\le n$, the matrix $A_2$ in the above proposition is also in the form $\begin{pmatrix}a_2 & b_2 \\ c_2 & \ve_{12}-a_2\end{pmatrix}$, with $b_2=0$ and $a_2,c_2$ defined as for the other cases, i.e.\ $a_2= \frac{\ve_{22}-\ve_{12}d}{\ve_{12}-2d}$ and $c_2 = a(\ve_{12} - a_2) - \ve_{2k} + b_2 - b_2a(\ve_{1k}-a) + (\ve_{1k} - a)a_2$. The same is true of $A_k$, but with $b_k = 1$.
\label{rem-formofgenerators}
\end{rem}

\subsection{The variety of reflective augmentations}
\label{sec-HCvariety}

Consider a reflective augmentation $\ve$ of $K$. Then $\ve$ is determined by the point $(\ve(c_{12}),\ve(c_{13}),\ldots,\ve(c_{1n}),\ve(c_{23}),\ldots,\ve(c_{n-1,n}))$ in $\bb C^{\binom n2}$. Given an $n$-crossing diagram $D$ for $K$, define the variety of augmentations $A(D)\in \bb C^{\binom n2}$ to be the zero set of the polynomials $\{x_{lj} + x_{lk} - x_{li}x_{ij} | (i,j,k)\text{ is a crossing and }1\le l\le n\}$, where in these polynomials (as in the ideal $\cl I_D$ from Section \ref{sec-HCknotdiagram}) if $r<s$ then we take $x_{sr}$ to be $x_{rs}$ and $x_{rr}=2$. Then $\hc(K)\otimes\bb C / \sqrt{0} \cong \bb C[A(D)]$ by the Nullstellensatz. We denote coordinates of $\ve\in A(D)$ by $\ve_{rs}:=\ve(c_{rs}),\ 1\le r<s\le n$.

	\begin{figure}[ht]
        \begin{tikzpicture}[scale=0.4,>=stealth]
        \draw[->] (0,2) -- node[at start,left]{$i$} (3,0);
        \draw[draw=white,very thick,double=black,->] (0,0) -- node[at start,left]{$i+1$} (3,2);
        \draw (1.5,-0.5) node[below] {$\sigma_i^{-1}$};

        \draw[->] (6,0) -- (9,2);
        \draw[draw=white,very thick,double=black,->] (6,2) -- (9,0);
        \draw (7.5,-1) node[below] {$\sigma_i$};
        \end{tikzpicture}
        \caption{Generators of $B_N$}
        \label{fig-braidgenerators}
	\end{figure}
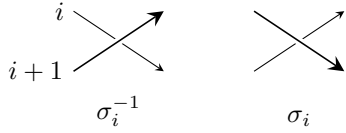

While the description of $\hc(K)$ given in Section \ref{sec-HCknotdiagram} will be important to relate augmentations to characters in $X_{TF}(E_K)$, we also desire to consider $K$ as a closed braid in order to consider an open book decomposition of the branched double-cover $\dc K$. If $D$ and $D'$ are any two knot diagrams of $K$, with crossing numbers $n$ and $n'$ respectively, then there is an isomorphism $\cl A^{ab}_n\to\cl A^{ab}_{n'}$ carrying $\cl I_D$ to $\cl I_{D'}$, and so $A(D')$ and $A(D)$ are algebraically equivalent. Thus to integrate the two perspectives we simply determine a knot diagram $D(w(\beta))$ from $\beta\in B_N$ and a word $w(\beta)$ in $\{\sg_1,\ldots,\sg_{N-1}\}$ that represents $\beta$.

Our convention is to identify $S^3$ with $\rls^3\cup\{\infty\}$ and let $z$-axis $\cup\{\infty\}$ be the braid axis. Closed braids will have an orientation so that their projection to the $xy$-plane (viewed from $z>0$) is counter-clockwise. We also fix a disk $D_N$ to agree with (the compactification of) the half-plane $\{x=0\}\cap\{y\ge 0\}$. Then $D_N$ has boundary equal to the braid axis and intersects $\hat\beta$ transversely in $N$ points. See Figure \ref{fig-closurediagram}. We assume the intersection points $\hat\beta \cap D_N = \{p_1,\ldots,p_N\}$ are in $\{z=0\}$ and $p_i$ is closer to the origin than $p_j$ if $i<j$.

For the diagram $D(w(\beta))$ draw $N$ horizontal segments, stacked vertically, and label the $i^{th}$ segment from the top by $i$ (so the top segment is 1 and the bottom $N$). Reading $w(\beta)$ from left to right, and moving along the segments from left to right, insert a crossing (with crossing information as in Figure \ref{fig-braidgenerators}) between segments $i$ and $i+1$ when $\sg_i^{\pm1}$ is encountered in $w(\beta)$. Reaching the end of $w(\beta)$, draw $N$ non-intersecting (and not self-intersecting) curves connecting the right endpoints to the left endpoints so that endpoints at the same height are connected and the curves pass above the rest of the diagram in the plane. The result is $D(w(\beta))$. For the purposes of Proposition \ref{prop-generatebyfewer} and the discussion in Section \ref{sec-dcK} we will label the components of $D(w(\beta))$ that intersect the projection of $D_N$ by $1,2,\ldots,N$ in order of increasing $y$-coordinate. An example is shown in Figure \ref{fig-closurediagram}.

Define $\gamma_{ij}$, $1\le i<j\le N$, to be an arc in the half-disk $D_N\cap\{z\ge0\}$ with endpoints on $p_i$ and $p_j$. Then $\gamma_{ij}$ determines a well-defined element of $\hc(K)$.

\begin{figure}[ht]
\begin{center}
\begin{tikzpicture}

\node[anchor=south west,inner sep=0] at (2,0) {\includegraphics[scale=0.6]{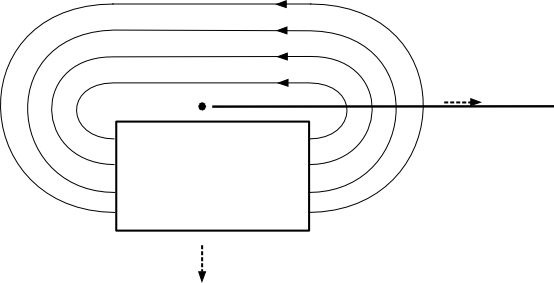}};

%%\draw[help lines] (0,0) grid (8,8);

\draw (5.5,0) node[above right] {$x$};
\draw (9.75,3.5) node[below] {$y$};
\draw (10.5,2.7) node[right] {{\Large $D_N$}};
\draw (5.6,1.75) node {{\LARGE $\beta$}};
\draw (8,2.7) node {{\small $1$}};
\draw (8.42,2.7) node {{\small$2$}};
\draw (8.86,2.7) node {{\small $3$}};
\draw (9.3,2.7) node {{\small $4$}};

\end{tikzpicture}

\end{center}
\caption{The knot diagram $D(w(\beta))$ of $K = \hat\beta$, with the projection of $D_N$ to the $xy$-plane.}
\label{fig-closurediagram}
\end{figure}

\begin{prop}Let $K$ be the braid closure of $\beta\in B_N$ and set $D = D(w(\beta))$. Then $\hc(K) = \cl A^{ab}_n/\cl I_D$ is generated by $\{c_{ij} = \gamma_{ij}\ |\ 1\le i<j\le N\}$. For any $\ve\in A(D)$, there is a maximal non-degenerate submatrix of $C_D(\ve)$ in the principal submatrix corresponding to rows $1,2,\ldots,N$.
\label{prop-generatebyfewer}
\end{prop}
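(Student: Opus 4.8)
The plan is to deduce both assertions from a single induction over the crossings of $D = D(w(\beta))$, processed in the left-to-right order in which they are read off from $w(\beta)$ starting at the slice $D_N$. Before the induction I would unwind the combinatorics of $D(w(\beta))$: writing $w(\beta) = s_1 s_2 \cdots s_n$, the crossings of $D$ are indexed by $1,\dots,n$ in this order, the $N$ components meeting the projection of $D_N$ (the components entering the braid from the closure arcs on the left) are exactly those labeled $1,\dots,N$, and every other component of $D$ first appears as the \emph{outgoing} underpassing strand of exactly one crossing. The key observation is that at any crossing $(i,j,k)$ both the overpassing strand $i$ and the incoming underpassing strand $j$ are components that already occur strictly to the left of that crossing — the overpassing strand continues unchanged from the left, and the incoming underpassing strand was present before the undercrossing that begins the new component. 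Ordering the non-initial components by the position of the crossing that creates them, one obtains an ordering $v_1,\dots,v_n$ of the components of $D$ with $\{v_1,\dots,v_N\} = \{1,\dots,N\}$ such that, for every $m>N$, the component $v_m$ is the outgoing underpassing strand of a crossing $(i,j,k)$ with $k = v_m$ and $i,j\in\{v_1,\dots,v_{m-1}\}$. Establishing the existence of this ordering is the only part that is not purely formal, and I expect it to be the main obstacle; everything afterward is mechanical.

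For the generating statement I would induct on $m$ on the claim that, for all $a,b\le m$, the element $c_{v_av_b}\in\hc(K)$ lies in the subring generated by $\{c_{ij}\mid 1\le i<j\le N\}$. The case $m=N$ is immediate. For the inductive step, if $m>N$ and $v_m$ is the outgoing underpassing strand of the crossing $(i,j,k)$, then the defining relation \eqref{eqn-diagram-defneqns} with $l=v_b$ gives $c_{v_mv_b} = c_{v_bi}\,c_{ij} - c_{v_bj}$ in $\hc(K)$; since $i,j,v_b\in\{v_1,\dots,v_{m-1}\}$ the three cords on the right lie in the subring by the inductive hypothesis (together with $c_{v_mv_m}=2$). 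Taking $m=n$ yields that $\{c_{ij}\mid 1\le i<j\le N\}$ generates $\hc(K)$. The identification $c_{ij}=\gamma_{ij}$ for $i,j\le N$ is then just the remark that $\gamma_{ij}$ runs from $p_i$, which lies on component $i$, to $p_j$, which lies on component $j$, inside $\{z\ge 0\}$, so after a small homotopy pushing it above the diagram it represents the generator $c_{ij}$ of Theorem \ref{thm-knotdiagram}.

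For the rank statement, fix $\ve\in A(D)$ and write $\mathbf r_m = (\ve(c_{m1}),\dots,\ve(c_{mn}))\in\bb C^n$ for the $m$-th row of $C_D(\ve)$. Applying $\ve$ to the same relation, now simultaneously for every $l=1,\dots,n$, gives $\mathbf r_{v_m} = \ve(c_{ij})\,\mathbf r_i - \mathbf r_j$ whenever $v_m$ is the outgoing underpassing strand of $(i,j,k)$; since $\ve(c_{ij})$ is a scalar this is a linear relation, so the same induction shows $\mathbf r_{v_m}\in\operatorname{span}(\mathbf r_1,\dots,\mathbf r_N)$ for every $m$, hence the row space of $C_D(\ve)$ equals $\operatorname{span}(\mathbf r_1,\dots,\mathbf r_N)$. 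Because $C_D(\ve)$ is symmetric (by the convention $c_{sr}=c_{rs}$), the columns indexed $N+1,\dots,n$ likewise lie in the span of the first $N$ columns, so $\operatorname{rank} C_D(\ve) = \operatorname{rank}\big[\ve(c_{rs})\big]_{1\le r,s\le N}$. Consequently the principal block $\big[\ve(c_{rs})\big]_{1\le r,s\le N}$ contains a square submatrix of size $\operatorname{rank} C_D(\ve)$ with nonzero determinant, which is the maximal non-degenerate submatrix claimed.
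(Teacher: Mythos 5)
Your proof is correct, but it takes a genuinely different route from the paper's. For the generating statement the paper argues geometrically: it homotopes an arbitrary cord into the disk $D_N$ and then invokes the skein relations of Figure \ref{fig-cordrelations} together with the height function of \cite[Section 2]{Ng05b} to reduce to the $\gamma_{ij}$. You instead work entirely with the diagram presentation of Theorem \ref{thm-knotdiagram}, ordering the components by the position of their creating undercrossing in the braid word and inducting via the relation $c_{lk}=c_{li}c_{ij}-c_{lj}$; your combinatorial claim (that the over-strand and the incoming under-strand at each crossing are either initial or created strictly earlier) is right, since a non-initial component cannot wrap through the closure region and hence occupies an interval of braid positions beginning at its creating crossing. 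For the rank statement the paper cites the construction of \cite{Cor14b}, which identifies the vector space of the associated representation with the column span over a generating set of meridians; you instead read the same defining relation as a linear dependence among the rows of $C_D(\ve)$ and use symmetry of the matrix to conclude $\operatorname{rank} C_D(\ve)=\operatorname{rank}\bigl[\ve(c_{rs})\bigr]_{1\le r,s\le N}$. Your version is more elementary and self-contained, applies transparently to \emph{every} point of $A(D)$ rather than only to augmentations realized by representations, and exhibits the two halves of the proposition as two readings of one inductive relation; the paper's version is shorter given the cited machinery and stays closer to the geometric definition of the cord ring. The only step where you still lean on geometry is the identification $c_{ij}=\gamma_{ij}$ for $i,j\le N$, which you treat at the same (brief) level of detail as the paper does.
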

\begin{proof}
That $\hc(K)$ is generated by $\{c_{ij} = \gamma_{ij}\ |\ 1\le i<j\le N\}$ can be proved as follows. Use homotopy to move a cord in the direction of the braid until it lies entirely in $D_N$. Now one may use the relations in Figure \ref{fig-cordrelations} to express the cord as a polynomial in three other cords, also in $D_N$, which have lower complexity (that being the height function discussed in \cite[Section 2]{Ng05b}). Those cords in $D_N$ with smallest height are homotopic to the $\gamma_{ij}$, $1\le i<j\le N$.

The association made in \cite{Cor14b} of a representation in $R_{\aug}(K)$ to an augmentation was such that one can choose any subset of meridians that generate $\pi_1(E_K)$, say that these correspond to components labeled $k_1,\ldots,k_r$, and there will be an identification of the vector space of the representation with the subspace of $\bb C^r$ generated by columns of $\left[\ve_{k_ik_j}\right]_{1\le i,j\le r}$. As the meridians for components $1,\ldots,N$ in $D(w(\beta))$ generate, this implies the existence of the desired submatrix of $C_D(\ve)$.
\end{proof} 

\section{The branched double-cover $\dc K$}
\label{sec-dcK}

Let $\dc K$ be the branched double-cover over $K\subset S^3$ and $p\co \dc K\to S^3$ a degree two branched covering map. We set $\wt K = p^{-1}(K)$ and $\wt E_K = p^{-1}(E_K)$. Recall that $\pi_1(\dc K) \cong \wt\pi/\langle m^2\rangle$, where $\wt\pi = (p|_{\wt E_K})_*(\pi_1(\wt E_K))$ agrees with the kernel of the mod 2 reduction of $\text{lk}\co\pi_1(E_K)\to\bb Z$ and for a meridian $m$, $\langle m^2\rangle$ is the smallest normal subgroup of $\wt\pi$ containing $m^2$. We recall that determinant of $K$ is $|\D_K(-1)|$, which equals the order of $H_1(\dc K)$.

In what follows we identify $\pi_1(\dc K)$ with this quotient group. There is a natural involution $\tau\co \wt E_K \to \wt E_K$. Note that $\tau_*\co \wt\pi\to\wt\pi$ satisfies $\tau_*(g) = mgm^{-1}$ for all $g\in\wt\pi$. The inclusion $\wt E_K \subset \dc K$ induces a map on $\pi_1(\dc K)$ that commutes with $\tau_*$.

%% Note that Mattman showed that $\tau$ equivariant characters are those which extend to $\pi_1(E_K) = \wt\pi \sqcup m\wt\pi$ (in his thesis), and used it to show that classical pretzel knots have $X(\dc K)$ all arising from $X_{TF}(E_K)$

%% Check out Section 3 of Boyer and Zhang too

\subsection{Characters from trace-free characters in $X(E_K)$}
\label{sec-knotgroup-to-dcK}

Recalling terminology from Section \ref{sec-TFchars}, consider a trace-free representation $\rho\co \pi_1(E_K)\to\SL_2\bb C$. Note that restriction of $\rho$ to $\wt\pi$ determines $\rho_0\in\text{Hom}(\pi_1(\dc K),\text{PSL}_2\bb C)$; that is, $\rho(m^2) = -\text{Id}$. As discussed in \cite{NY}, given $\rho$ we may define $\wt\rho_0\in R(\dc K)$ by setting $\wt\rho_0(g) = (\sqrt{-1})^{\text{lk}(g)}\rho_0(g)$ (note $\text{lk}(g)\in 2\ints$ for $g\in\wt\pi$). Moreover, there is an induced map on characters $\Y\co X_{TF}(E_K)\to X(\dc K)$, $\Y(\chi_\rho) = \chi_{\wt\rho_0}$. 

Consider the set $R^\tau(\dc K)$ of $\tau$-equivariant representations, i.e.\ those $\rho\in R(\dc K)$ such that there is $T\in\SL_2\bb C$ so that $\rho([mgm^{-1}]) = T\rho([g])T^{-1}$ for $[g]\in\pi_1(\dc K)$. If $\rho$ is non-trivial, any such $T$ is either $\pm\text{Id}$ or $\tr(T) = 0$ \cite[Lemma 19]{NY}. Let $X^\tau(\dc K)$ be the characters of elements in $R^\tau(\dc K)$ and recall the involution $\iota\co X_{TF}(E_K)\to X_{TF}(E_K)$.

\begin{thm}[\cite{NY}] $\Y(X_{TF}(E_K)) = X^\tau(\dc K)$. Moreover, each fiber of $\Y$ is an orbit of $\iota$.
\label{thm-NYInvolution}
\end{thm}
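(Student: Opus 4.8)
The plan is to establish the two assertions of Theorem~\ref{thm-NYInvolution} separately, leaning on the structural facts assembled in Section~\ref{sec-TFchars}. Write $\Y$ as the composition $\chi_\rho\mapsto\chi_{\wt\rho_0}$, where $\rho_0$ is the restriction of $\rho$ to $\wt\pi$ (descending to $\pi_1(\dc K)$ since $\rho(m^2)=-\mathrm{Id}$) and $\wt\rho_0(g)=(\sqrt{-1})^{\mathrm{lk}(g)}\rho_0(g)$. First I would check well-definedness: if $\chi_\rho=\chi_{\rho'}$ for trace-free $\rho,\rho'$, then when $\rho$ is irreducible they are conjugate (by \cite{CS}, as recalled in Section~\ref{SecBGCharVar}), and conjugation clearly commutes with restriction, the twist by $(\sqrt{-1})^{\mathrm{lk}}$, and taking characters; the reducible case can be handled by noting that a reducible trace-free $\rho$ has diagonalizable (or trivial-up-to-sign) meridian image and its character is that of a diagonal (hence binary-dihedral-type) representation, whose image under $\Y$ depends only on $\chi_\rho$. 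So $\Y$ is a well-defined map of sets (indeed a morphism of varieties, using the coordinate description from \cite{Nag}).

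Next, for the inclusion $\Y(X_{TF}(E_K))\subseteq X^\tau(\dc K)$: given trace-free $\rho$, set $T=\rho(m)$, which has trace $0$. For $g\in\wt\pi$ we have $mgm^{-1}\in\wt\pi$ and $\rho(mgm^{-1})=T\rho(g)T^{-1}$; multiplying by $(\sqrt{-1})^{\mathrm{lk}}$ changes nothing here because $\mathrm{lk}(mgm^{-1})=\mathrm{lk}(g)$, so $\wt\rho_0([mgm^{-1}])=T\wt\rho_0([g])T^{-1}$, i.e.\ $\wt\rho_0\in R^\tau(\dc K)$. For the reverse inclusion $X^\tau(\dc K)\subseteq\Y(X_{TF}(E_K))$, start with $\rho\in R^\tau(\dc K)$ and an intertwiner $T$ with $\rho(\tau_* g)=T\rho(g)T^{-1}$. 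By \cite[Lemma 19]{NY} (quoted in Section~\ref{sec-dcK}), for nontrivial $\rho$ one may take $T$ with $\tr(T)=0$ (the cases $T=\pm\mathrm{Id}$ correspond to $\rho$ extending to an abelian or reducible situation handled directly). One then \emph{builds} a trace-free $\rho'$ on $\pi_1(E_K)$: since $\pi_1(E_K)$ is generated by $\wt\pi$ together with a meridian $m$, with $m^2\in\wt\pi$ and conjugation by $m$ realizing $\tau_*$, define $\rho'$ on $\wt\pi$ by untwisting ($\rho'(g)=(\sqrt{-1})^{-\mathrm{lk}(g)}\rho(g)$) and set $\rho'(m)=T$ (rescaled so $\rho'(m)^2$ matches $\rho'$ of $m^2\in\wt\pi$; the normalization $\tr T=0$ together with $T^2=-\mathrm{Id}$ up to scalar makes this consistent). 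Checking that $\rho'$ is a well-defined homomorphism is the content of \cite{NY}; then $\rho'$ is trace-free by construction and $\Y(\chi_{\rho'})=\chi_\rho$.

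Finally, for the fiber statement: suppose $\Y(\chi_\rho)=\Y(\chi_{\rho'})$ with $\rho,\rho'$ trace-free. Since $\iota(\rho)(g)=(-1)^{\mathrm{lk}(g)}\rho(g)$ agrees with $\rho$ on $\wt\pi$ (where $\mathrm{lk}$ is even), $\rho$ and $\iota(\rho)$ have the same restriction to $\wt\pi$, hence the same image under $\Y$; so the $\iota$-orbit of $\chi_\rho$ lies in the fiber. Conversely, if $\wt\rho_0$ and $\wt\rho_0'$ have the same character on $\pi_1(\dc K)$, then (in the irreducible case, via \cite{CS}) they are conjugate, so after conjugating $\rho'$ we may assume $\rho$ and $\rho'$ agree on $\wt\pi$; the two possible extensions to $\pi_1(E_K)$ by a trace-free meridian image differ only by $\rho'(m)=\pm\rho(m)$ times the ambiguity already accounted for, and the sign flip is exactly $\iota$ (it multiplies $\rho(m)$ by $-1$, hence every $\rho(g)$ by $(-1)^{\mathrm{lk}(g)}$). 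The reducible case is finite and can be matched by hand using Proposition~\ref{prop-NYfixedpts}. The main obstacle I anticipate is the surjectivity direction: verifying that the reconstructed $\rho'$ on $\pi_1(E_K)$ is a genuine homomorphism — that the trace-free meridian image $T$ is compatible with \emph{all} relations, not just with conjugation — which is where one really needs to invoke the careful analysis in \cite{NY} (especially the role of $\tr(T)=0$ versus $T=\pm\mathrm{Id}$) rather than just the group-theoretic generalities.
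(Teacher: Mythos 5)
The paper does not actually prove this statement---it is quoted from \cite{NY}, with only a brief remark afterwards describing how a $\tau$-equivariant representation extends over $\pi_1(E_K)=\wt\pi\sqcup m\wt\pi$ via the intertwiner $T$ and the dichotomy $T=\pm\mathrm{Id}$ versus $\tr(T)=0$. Your outline follows exactly that route (restriction plus $(\sqrt{-1})^{\mathrm{lk}}$-twist in one direction, extension by the traceless $T$ in the other, and the sign flip on $\rho(m)$ realizing $\iota$ on fibers) and, like the paper, defers the genuinely technical verifications to \cite{NY}, so it is consistent with the paper's treatment.
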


%% note that if can take T central then the image of an abelian representation
That a $\tau$-equivariant representation $\rho$ extends to $\pi_1(E_K) = \wt\pi\ \sqcup\ m \wt\pi$ also appears in \cite[proof of Proposition 3.3.1]{Mattman}. In fact an extended representation $\wt\rho$ is explicitly described from $\rho$ and the matrix $T$, appearing above, as follows: for $g\in\wt\pi$ set $\wt\rho(g) = \rho([g])$ and for $g\in m \wt\pi$ set $\wt\rho(g) = T\rho([m^{-1}g])$.

If $T = \pm\text{Id}$ then $\wt\rho$ has image in $\pm\text{Id}$ (as $\pi_1(E_K)$ is generated by conjugates of $m$). In such a case $\rho$ must be the trivial representation. Otherwise $\tr\wt\rho(m) = \tr(T) = 0$, and we have $\chi_{\wt\rho}\in X_{TF}(E_K)$, and $\Y(\chi_{\wt\rho}) = \chi_\rho$. 

By the discussion in Section \ref{SecBGCharVar}, particularly Remark \ref{rem-Tappliesmore}, the set $X^\tau(\dc K)$ is the intersection of $X(\dc K)$ with the set cut out by $\{\D(\rho(g_i),\rho(g_j),\rho(g_k))=0\}$ for a set of generators. So $X^\tau(\dc K)$ is algebraic.

\subsection{Characters from the cord ring}
\label{Sec-HCtocharofDC}

A map from the complexified abelian cord ring to $\bb C[X(\dc K)]$ can be defined as follows. An unoriented cord $\gamma\in\hc(K)$ has two lifts to $\dc K$, the union of which give, up to conjugation and inversion, an element $\wt\gamma\in\pi_1(\dc K)$. We define $\Z\co \hc(K)\otimes\bb C \to \bb C[X(\dc K)]$ by setting $\Z(\gamma\otimes z) = z t_{\wt\gamma}$ for $\gamma\otimes z\in\hc(K)\otimes\bb C$ (recall we use $t_{\wt\gamma}$ to denote the trace function of $\wt\gamma$). The well-definedness of $\Z$ follows from checking relations (1) and (2) of Definition \ref{defnAbCordRing}. Relation (1) holds since the identity has trace 2. That $\Z$ respects relation (2) is a consequence of the trace identity $\tr(AB)+\tr(AB^{-1}) = \tr(A)\tr(B)$ in $SL_2\bb C$. The map $\Z$ was introduced in \cite{Ng05b}, and was there shown to be an isomorphism if $K$ is a 2-bridge knot.

%% (see Figure \ref{fig-traceidentity}). 
%%				\[[\text{Insert figure}]\]

\begin{prop}Suppose that $K$ is the braid closure of $\beta\in B_N$ and define $\gamma_{ij},\ 1\le i<j\le N$ as in Section \ref{sec-HCvariety}. There exist $g_2,\ldots,g_N \in \pi_1(\dc K)$, that generate $\pi_1(\dc K)$, such that $\Z(\gamma_{ij}) = \bar t_{ij}$ for $1\le i< j\le N$, where $\bar t_{ij}$ is the trace function $t_{g_ig_j^{-1}}$ and we set $g_1$ to the identity.
\label{prop-surjectiononDCgroup}
\end{prop}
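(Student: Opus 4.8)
The idea is to realize the cord generators $\gamma_{ij}$ concretely as the "square root" elements in $\pi_1(\dc K)$ and identify the trace function $\Z(\gamma_{ij})$ with $t_{g_ig_j^{-1}}$ for suitably chosen $g_i$. First I would recall the open-book / braid picture from Section~\ref{sec-HCvariety}: the half-disk $D_N\cap\{z\ge 0\}$ meets $\wt K$ in $N$ points $p_1,\dots,p_N$, and the arc $\gamma_{ij}$ in $D_N$ with endpoints $p_i,p_j$ lifts to $\dc K$ as a loop $\wt\gamma_{ij}$ (the union of its two lifts, up to conjugacy and inversion). The key observation is that $D_N$ lifts to two disjoint copies in $\dc K$ and the two sheets are interchanged by the deck involution, so that each $\wt\gamma_{ij}$ is homotopic to a concatenation $g_i g_j^{-1}$ where $g_i$ is the loop that goes out along a fixed arc to $p_i$ in one sheet and returns along the corresponding arc in the other sheet. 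Concretely, fix a basepoint near $p_1$ in $\dc K$, let $\eta_i$ be an arc in $D_N\cap\{z\ge0\}$ from $p_1$ to $p_i$ (so $\eta_1$ is constant), and set $g_i$ to be the lift of the loop $\eta_i$ followed by the "other-sheet" return; then $g_1$ is the identity and $\wt\gamma_{ij}$ is freely homotopic to $g_i g_j^{-1}$ up to conjugacy and inversion. Since the trace function only depends on the conjugacy class and is invariant under inversion in $\SL_2\bb C$, this gives $\Z(\gamma_{ij}) = t_{\wt\gamma_{ij}} = t_{g_ig_j^{-1}} =: \bar t_{ij}$.

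**Generation.** Next I must check that $g_2,\dots,g_N$ generate $\pi_1(\dc K)$. For this I would use the relationship $\pi_1(\dc K)\cong\wt\pi/\langle m^2\rangle$ from Section~\ref{sec-dcK}, together with the fact that $\pi_1(E_K)$ is generated by the $N$ meridians $m_1,\dots,m_N$ of the braid-closure diagram $D(w(\beta))$ corresponding to the strands hitting $D_N$ (indeed, conjugates of any single meridian generate, but the Wirtinger picture for a braid closure shows these $N$ meridians suffice). The subgroup $\wt\pi$ is generated by the products $m_1 m_i^{-1}$, $2\le i\le N$, together with the squares $m_i^2$ and the elements $m_1 g m_1^{-1} \cdot g^{-1}$ for $g\in\wt\pi$ — but modulo $\langle m^2\rangle$ in $\pi_1(\dc K)$, the squares die and $\tau_*$-conjugation becomes inner, so $\pi_1(\dc K)$ is generated by the images of $m_1 m_i^{-1}$. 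Finally I identify the lift of $m_1 m_i^{-1}$ with $g_i$: the meridian $m_i$ lifts, in the branched cover, to (half of) a loop around $p_i$, and the composite $m_1 m_i^{-1}$ descends to exactly the element $g_i$ going between the two sheets via $\eta_i$. This identification is essentially the statement that the $\gamma_{1i}$ generate $\hc(K)$ (Proposition~\ref{prop-generatebyfewer}), transported through $\Z$.

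**Main obstacle.** I expect the main difficulty to be the careful bookkeeping of basepoints, sheets, and orientations needed to pin down *which* element of $\pi_1(\dc K)$ the lift $\wt\gamma_{ij}$ equals — not just up to conjugacy and inversion (which is all $\Z$ needs) but precisely enough to see it is $g_ig_j^{-1}$ with a \emph{coherent} choice of the $g_i$ across all pairs $(i,j)$. In particular one must verify the "cocycle" compatibility $\wt\gamma_{ij}\sim g_i g_j^{-1}$ is consistent: the concatenation of $\gamma_{1i}$ and $\gamma_{ij}$ (suitably reparametrized) is homotopic to $\gamma_{1j}$ in $S^3\setminus K$, so the lifts satisfy $g_i (g_i^{-1}g_j) = g_j$ up to the ambiguities, forcing the definition $g_j := $ lift of $\gamma_{1j}$ to be the right one. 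Once this is set up, the trace identities and the generation argument are routine; the geometric lifting argument is where care is required, and it is convenient that $\Z$ is blind to conjugation and inversion so only the conjugacy-and-inversion class of each lift matters.
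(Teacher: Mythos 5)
Your construction of the $g_i$ and the trace identification $\Z(\gamma_{ij})=\bar t_{ij}$ is essentially the paper's: both take $g_i$ to be the loop-lift of the arc $\gamma_{1i}$ (the paper realizes this as the core $f_i$ of a handle in the page $S$, the branched double cover of $D_N$), and both use that $\Z$ only sees conjugacy-and-inversion classes, so the relation $\wt\gamma_{ij}\sim g_ig_j^{-1}$ up to conjugation and inversion suffices there. Where you genuinely diverge is the generation statement. The paper gets it almost for free from the open book decomposition of $\dc K$ with page $S$: the inclusion of a page induces a surjection $\pi_1(S)\to\pi_1(\dc K)$, and the $f_i$ visibly generate $\pi_1(S)$ since $S$ is a disk with $N-1$ handles. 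You instead go through $\pi_1(\dc K)\cong\wt\pi/\langle\langle m^2\rangle\rangle$, the braid-closure fact that the $N$ meridians through $D_N$ generate $\pi_1(E_K)$, a Reidemeister--Schreier argument showing the images of $m_1m_i^{-1}$ generate the quotient, and the identification $[m_1m_i^{-1}]=g_i$. That route is correct, but note that the last identification is exactly the content of the paper's Proposition \ref{prop-indextwo-to-dcK}, which is proved separately (after this proposition) with a specific choice of meridians: $m_i$ based near $p_1$, running along a push-off of $\gamma_{1i}$, encircling $p_i$, and returning. You need this identification on the nose (up to inversion), not merely up to conjugation, since conjugates of generators by unknown elements need not generate; so your Wirtinger generators must be chosen to be these particular meridians, and the sheet/basepoint bookkeeping you flag is genuinely where the work lies. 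In effect your proposal front-loads Proposition \ref{prop-indextwo-to-dcK} into the proof of this one, buying a more algebraic argument at the cost of the careful lifting analysis, whereas the paper's open-book argument isolates generation cleanly and defers the meridian identification to where it is actually needed.
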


\begin{figure}[ht]
\begin{center}
\begin{tikzpicture}

\node[anchor=south west,inner sep=0] at (2,0) {\includegraphics[scale=0.5]{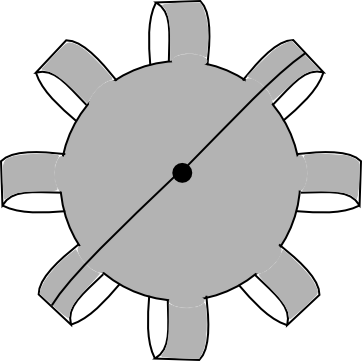}};

%\draw[help lines] (0,0) grid (8,8);

\draw (4.6,2.5) node[below] {$\wt p_1$};
\draw (5.5,3.5) node[below] {$f_j$};
%\draw (8.5,2.7) node {{\small$l_2$}};
%\draw (8.9,2.7) node {{\small $l_3$}};
%\draw (9.3,2.7) node {{\small $l_4$}};

\end{tikzpicture}

\end{center}
\caption{A page in the open book decomposition of $\dc K$ obtained from a representative of $\beta$}
\label{FigCoverofDn}
\end{figure}

\begin{proof}Choose a homeomorphism of $D_N$ fixing the boundary that represents $\beta$. This determines an open book decomposition of $\dc K$, where a page $S$ is a branched double cover of $D_N$. Identify $S$ with a disk with $N-1$ handles attached (see Figure \ref{FigCoverofDn}). As suggested by the figure we can put coordinates on the disk so that it is a unit disk in some plane in $\rls^3$ and the $k^{th}$ handle, $k=1,\ldots N-1$, has one foot attached to the disk on an interval about $\text{exp}(\sqrt{-1}\frac{k\pi}{N-1})$ and the other foot about $\text{exp}(\sqrt{-1}(\frac{k\pi}{N-1} + \pi))$. Label the center of the disk $\wt p_1$. We also require that rotation by $\pi$ about the normal line through $\wt p_1$ to this plane gives a self-homeomorphism of $S$ taking each handle to itself, each with one fixed point on the normal line (the fixed point orthogonally projecting to $\wt p_1$). This involution on $S$ has corresponding quotient map $q\co S\to q(S)$ is a branched double-cover over a disk with $N$ branch points, which we identify with $D_N$ by a homeomorphism that takes $q(\wt p_1)$ to $p_1$ and the image of the fixed point of the $k^{th}$ handle taken to $p_{k+1}$. Note that if $N$ is odd then $S$ has connected boundary and if $N$ is even, the two boundary components are interchanged by the involution. 

Write $f_j\in \pi_1(S,\wt p_1)$, $j=2,\ldots,n$ for the homotopy class of the union of an (equivariant) core of the $j-1^{th}$ handle with a line segment through $\text{exp}(\sqrt{-1}\frac{(j-1)\pi}{N-1})$ and $\text{exp}(\sqrt{-1}\frac{(j-1)\pi}{N-1}+\sqrt{-1}\pi)$ (with some choice of orientation). We may choose our identification $q(S)=D_N$ so that $q(f_j)$ is identified with $\gamma_{1j}$ for each $j=2,\ldots,N$.

Identify $S$ with a page in an open book decomposition of $\pi_1(\dc K)$ and let the basepoint of $\pi_1(\dc K)$ be the image of $\wt p_1$ under the inclusion map $S\hookrightarrow \dc K$, which induces a surjection on fundamental groups. If $g_j\in\pi_1(\dc K)$ is the image of $f_j$ then as $\wt \gamma_{1j} = g_j$ up to inversion and conjugation, we have $\Z(\gamma_{1j}) = t_{g_1g_j^{-1}}$. Moreover, for each $\wt \gamma_{ij}$ there is a path in the disk connecting $\wt \gamma_{ij}$ to $\wt p_1$ by which we obtain that $\wt \gamma_{ij}$, under the inclusion $S\subset\pi_1(\dc K)$ is homotopic (up to inversion) to $g_ig_j^{-1}$. Hence $\Z(\gamma_{ij}) = \bar t_{ij}$ also.
\end{proof}

Note that $t_it_j - \bar t_{ij} = t_{ij}$ and so the image of $\Z$ contains $\{t_i, 1\le i\le n\} \cup \{t_{ij}, 1< i<j\le n\}$. Also $\bar t_{ij} = \bar t_{ji}$ as the trace is invariant under inversion.

\begin{prop}Suppose $K$ is the closure of a braid $\beta\in B_N$ and $D(w(\beta))$ has $n$ crossings, with $N\le n$. There is a choice of meridians $\{m_1,\ldots,m_n\}$ generating $\pi_1(E_K)$ with an isomorphism $\pi_1(\dc K) \cong \wt\pi/\langle\langle m_1^2\rangle\rangle$ setting $[m_im_j^{-1}]$ equal to $g_ig_j^{-1}$ for $1\le i<j\le N$, where $[m_im_j^{-1}]$ is the class of $m_im_j^{-1}$ in $\wt\pi/\langle\langle m_1^2\rangle\rangle$. The $g_j, j=2,\ldots,N$ are as in the proof of Proposition \ref{prop-surjectiononDCgroup} and $g_1$ denotes the identity.
\label{prop-indextwo-to-dcK}
\end{prop}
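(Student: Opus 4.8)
The plan is to compare the two constructions of $\pi_1(\dc K)$ that have been set up: on the one hand, the open-book description from the proof of Proposition~\ref{prop-surjectiononDCgroup}, which yields generators $g_2,\ldots,g_N$ of $\pi_1(\dc K)$ realizing $\Z(\gamma_{ij}) = \bar t_{ij} = t_{g_ig_j^{-1}}$; on the other hand, the quotient description $\pi_1(\dc K)\cong\wt\pi/\langle\langle m^2\rangle\rangle$ from Section~\ref{sec-dcK}, which uses a meridian-generated presentation of $\pi_1(E_K)$. First I would fix the diagram $D = D(w(\beta))$ and recall from Section~\ref{sec-HCvariety} that its components crossing the projection of $D_N$ are labeled $1,2,\ldots,N$ in order of increasing $y$-coordinate; the Wirtinger meridians $m_1,\ldots,m_n$ of $D$ are our candidate generators of $\pi_1(E_K)$, and $m_i$ for $1\le i\le N$ is the meridian of the $i$-th strand meeting $D_N$. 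The point is that the cord $\gamma_{ij}$, being an arc in the half-disk $D_N\cap\{z\ge0\}$ from $p_i$ to $p_j$, is, as an element of $HC_0(K)$ (or $\hc(K)$), represented exactly by $m_im_j^{-1}$ up to the identifications in the cord ring — indeed the loop going out along the cord, around the $j$-th strand and back, differs from $m_im_j^{-1}$ only by homotopy in $E_K$ and by a power of $\mu$ which dies in $\hc(K)$.

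The key step is then to track $m_im_j^{-1}$ through the two-fold cover. Since $\text{lk}(m_im_j^{-1}) = 0$, the element $m_im_j^{-1}$ lies in $\wt\pi = \ker(\text{lk}\bmod 2)$, hence defines a class $[m_im_j^{-1}]$ in $\wt\pi/\langle\langle m_1^2\rangle\rangle = \pi_1(\dc K)$. I would show this class is carried to $g_ig_j^{-1}$ under the isomorphism by identifying the lift of the cord loop. Concretely: the open book page $S$ from Proposition~\ref{prop-surjectiononDCgroup} is a branched double cover $q\co S\to D_N$ with $q(\wt p_1) = p_1$ and $q$ sending the fixed point of the $k$-th handle to $p_{k+1}$; the element $f_j\in\pi_1(S,\wt p_1)$ was chosen so that $q(f_j)$ is identified with $\gamma_{1j}$, and $g_j$ is the image of $f_j$ under $S\hookrightarrow\dc K$. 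A loop representing $\gamma_{1j}$ in $D_N$ lifts under $q$ to a loop representing $f_j$ (the double cover restricted over an arc between two branch points gives a loop downstairs), and matching this with the covering $p\co\dc K\to S^3$ — whose restriction to $\wt E_K$ realizes $\wt\pi$ — gives that the preferred lift of the meridian loop $m_1 m_j^{-1}$ is $f_j$, i.e. $[m_1m_j^{-1}] = g_j = g_1g_j^{-1}$. For general $1\le i<j\le N$ one uses, as in the proof of Proposition~\ref{prop-surjectiononDCgroup}, a path in the disk connecting $\wt p_i$ to $\wt p_1$ (equivalently, writing $\gamma_{ij}$ in terms of $\gamma_{1i},\gamma_{1j}$ via the cord relations) to conclude $[m_im_j^{-1}] = g_ig_j^{-1}$ up to conjugation; but in fact one wants the literal group elements to agree, so I would be careful to choose the base paths for the $f_j$ and the arcs in $D_N$ compatibly, so that the identification of $q(S)$ with $D_N$ used for all the $\gamma_{ij}$ simultaneously gives equalities on the nose, not just up to conjugacy.

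The main obstacle I anticipate is exactly this basepoint/conjugacy bookkeeping: the earlier propositions phrase things "up to inversion and conjugation," which is harmless for trace functions but not for an assertion that $[m_im_j^{-1}]$ \emph{equals} $g_ig_j^{-1}$ in the group. Resolving it means fixing, once and for all, a basepoint for $\pi_1(\dc K)$ (the image of $\wt p_1$), a system of arcs from $\wt p_1$ to each $\wt p_i$ in the disk, a consistent choice of meridian loops $m_i$ based on the torus boundary and connected to the diagram strands, and then checking that the homeomorphism $q(S)\xrightarrow{\sim} D_N$ can be taken to intertwine all of these. A secondary point to verify is that $\langle\langle m_1^2\rangle\rangle$ — the normal closure of the square of the \emph{specific} meridian $m_1$ — is the right subgroup; this follows because any two meridians are conjugate in $\pi_1(E_K)$ and $\wt\pi$ is normal, so the normal closure in $\wt\pi$ is independent of the choice among the $m_i$, matching the description of $\pi_1(\dc K)$ recalled at the start of Section~\ref{sec-dcK}. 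Once the base data are pinned down, the verification that $m_i m_j^{-1}$ lifts to $g_i g_j^{-1}$ is a direct covering-space computation using the explicit branched cover $S\to D_N$ of Proposition~\ref{prop-surjectiononDCgroup}.
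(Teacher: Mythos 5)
Your proposal follows essentially the same route as the paper: the paper also builds the meridians $m_i$ as lassos based at a point of $\partial E_K\cap D_N$ near $p_1$, connected to the punctures by push-offs of the arcs $\gamma_{1i}$, lifts them explicitly through the branched double cover page $S\to D_N$ of Proposition \ref{prop-surjectiononDCgroup}, and fixes the remaining ambiguity exactly as you indicate, by choosing the orientations of the $f_i$ so that $[m_im_j^{-1}]=g_ig_j^{-1}$ on the nose. Your small slip writing $[m_1m_j^{-1}]=g_j=g_1g_j^{-1}$ (it should be $g_j^{-1}=g_1g_j^{-1}$ with the paper's conventions) is absorbed by that same freedom in orienting the $f_j$, so it is not a substantive gap.
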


\begin{figure}[ht]
\begin{center}
\begin{tikzpicture}

\node[anchor=south west,inner sep=0] at (2,0) {\includegraphics[scale=0.5]{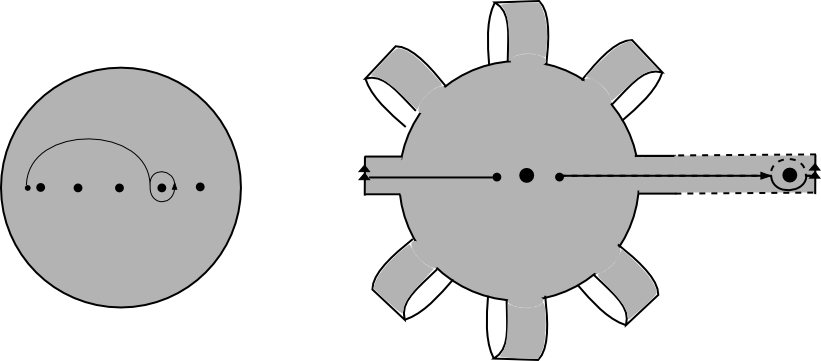}};

%%\draw[help lines] (0,0) grid (8,8);

\draw (2.25,2.45) node {{\small $q$}};
\draw (2.6,2.4) node[below] {$p_1$};
\draw (3.15,2.4) node[below] {$p_2$};
\draw (3.75,2.4) node[below] {$p_3$};
\draw (4.3,2.4) node[below] {$p_4$};
\draw (4.9,2.4) node[below] {$p_5$};
\draw (3.45,3.1) node[above right] {$m_4$};

\draw (9.5,2.5) node[below] {$\wt p_1$};
\draw (9.9,2.5) node[below] {$\wt q$};
\draw (10.5,2.9) node {$\wt m_4$};
\draw (13.2,2.4) node[below] {$\wt p_4$};

\end{tikzpicture}

\end{center}
\caption{Left: meridian generators $m_i$ of $\pi_1(E_K,q)$; Right: a lift of $m_i$ to a page in $\dc K$}
\label{fig-appropriatemeridians}
\end{figure}

\begin{proof}For the basepoint of $\pi_1(E_K)$ take a point $q$ in $\bd(E_K) \cap D_N$ near $p_1$. For each $1\le i\le N$ take a push-off of $\gamma_{1i}$ a short distance toward the $z$-axis (a single point if $i=1$). Let $m_i$ be the concatenation of this push-off with a small, counterclockwise-oriented circle about $p_i$ (in $\bd(E_K)\cap D_N$ say), followed by the path traversing the push-off in reverse direction (see Figure \ref{fig-appropriatemeridians}). Abusing notation, we write $m_i$ both for this loop and the element it represents in $\pi_1(E_K,q)$.

The right part of Figure \ref{fig-appropriatemeridians} shows $S$, the branched double-cover over $D_N$, and the path to which $m_i$ lifts in $\dc K$ upon choosing a lift $\wt q$ of $q$ (the part of the handle, up to its fixed point, is displayed to the right side). Note that $m_i^2$ will lift to the (trivial) loop that follows the depicted path then traverses it in reverse order, except following the dashed arc near the fixed point on the handle upon its return. Fixing a short segment in $S$ from $\wt p_1$ to $\wt q$ we see that $[m_im_j^{-1}]$ corresponds to one of $(g_ig_j^{-1})^{\pm1}$ or $(g_ig_j)^{\pm1}$ and we can be careful about the choice of orientation in defining the $f_i$ in Proposition \ref{prop-surjectiononDCgroup} so that $[m_im_j^{-1}] = g_ig_j^{-1}$. Note that $[m_im_1^{-1}] = g_i^{-1} = g_1g_i^{-1}$ for $1<i\le N$.
\end{proof}

\begin{lem}A character $\chi\in X^\tau(\dc K)$ is determined by polynomials in its values on $g_ig_j^{-1}$, $1\le i,j\le N$ where $g_1$ is taken to be the identity.
\label{lem-CharDetermination}
\end{lem}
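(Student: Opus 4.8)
The plan is to show that the values $\chi(g_ig_j^{-1})$ for $1\le i,j\le N$ determine, via polynomial expressions, the full character $\chi$ on $\pi_1(\dc K)$. The starting point is that $\{g_2,\ldots,g_N\}$ generates $\pi_1(\dc K)$ (Proposition \ref{prop-surjectiononDCgroup}), so by the classical theory recalled in Section \ref{SecBGCharVar} (the results of Vogt, Magnus, and Gonz\'alez-Acu\~na--Montesinos) every element of $\bb C[X(\dc K)]$ is a rational function in the traces $t_{g_i}$, $t_{g_ig_j}$, and $t_{g_ig_jg_k}$ for $1\le i<j<k\le N$. So it suffices to express each of these generating trace functions polynomially in the quantities $\bar t_{ij} := t_{g_ig_j^{-1}}$ (with $g_1$ the identity, so $\bar t_{1i} = t_{g_i}$ and $\bar t_{ij}=\bar t_{ji}$).

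First I would handle the length-one and length-two traces. Since $g_1 = \mathrm{Id}$, we have $t_{g_i} = \bar t_{1i}$ directly. For the products $t_{g_ig_j}$, I would use the basic trace identity $\tr(AB) + \tr(AB^{-1}) = \tr(A)\tr(B)$, which gives $t_{g_ig_j} = t_{g_i}t_{g_j} - \bar t_{ij} = \bar t_{1i}\bar t_{1j} - \bar t_{ij}$; this is exactly the relation $t_it_j - \bar t_{ij} = t_{ij}$ noted after the proof of Proposition \ref{prop-surjectiononDCgroup}. The genuinely new input needed is for the triple traces $t_{g_ig_jg_k}$, and this is where the $\tau$-equivariance hypothesis enters. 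The point is that for a character in $X^\tau(\dc K)$, Remark \ref{rem-Tappliesmore} together with the discussion of the Fricke discriminant shows that $\D(\rho(g_i),\rho(g_j),\rho(g_k)) = 0$ for all $i,j,k$ (equivalently, the $\tau$-equivariance matrix $T$ with $\tr T = 0$ satisfies $(T\rho(g_i))^2 = -\mathrm{Id}$ for every generator). When the Fricke discriminant vanishes, the quadratic $z^2 + Pz + Q = 0$ having $z_{123} = t_{g_ig_jg_k}$ and $z_{132} = t_{g_ig_kg_j}$ as roots has a double root, so $t_{g_ig_jg_k} = -P/2$, which is a polynomial in $t_{g_i}, t_{g_j}, t_{g_k}$ and the pairwise products $t_{g_ig_j}$ — hence, by the previous paragraph, a polynomial in the $\bar t_{ab}$. (One should note the exceptional case in Lemma \ref{lem-MagnusTraceless} where all commutators have trace $2$; there $\rho$ is reducible, its character agrees with that of an abelian representation, and all triple traces are again determined by single and double traces, so the conclusion still holds.)

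Assembling these, every generator of $\bb C[X(\dc K)]$ — the $t_{g_i}$, the $t_{g_ig_j}$, and the $t_{g_ig_jg_k}$ — has been written as a polynomial in $\{\bar t_{ij} : 1\le i,j\le N\}$, and since these generators separate points of $X(\dc K)$, the character $\chi$ is determined by the tuple $(\bar t_{ij}(\chi))_{1\le i,j\le N}$. I expect the main obstacle to be the careful bookkeeping in the triple-trace step: one must make sure that for an arbitrary triple $(i,j,k)$ of generators of a $\tau$-equivariant representation the Fricke discriminant really does vanish (invoking Remark \ref{rem-Tappliesmore} with the fixed $T$ coming from equivariance, and noting that $\tr(T)=0$ forces $(T\rho(g))^2 = -\mathrm{Id}$ for all $g$ in the generating set), and to correctly dispose of the degenerate commutator-trace-$2$ case; the algebra itself (reducing $P$ and $Q$ to expressions in $\bar t_{ij}$) is routine.
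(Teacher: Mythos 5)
Your overall skeleton agrees with the paper up to the crucial step: reduce to the coordinates $t_{g_i}$, $t_{g_ig_j}$, $t_{g_ig_jg_k}$ of a generating set and convert the first two families into the $\bar t_{ij}$ exactly as the paper does. The divergence is the triple-trace step, and there your argument as written has two gaps. First, the assertion that ``$\tr(T)=0$ forces $(T\rho(g_i))^2=-\text{Id}$ for every generator'' does not follow from tracelessness plus equivariance alone: what is needed is the group-theoretic identity $\tau_*(g_i)=[m_1g_im_1^{-1}]=g_i^{-1}$ in $\pi_1(\dc K)$ (it holds because $[m_1m_i^{-1}]=[m_1m_i]=g_i$ and $[m_1^2]=[m_i^2]=1$; the paper carries out precisely this computation in the proof of Theorem \ref{thm-equivariant}), which combined with $\rho(\tau_*g_i)=T\rho(g_i)T^{-1}$ and $T^2=-\text{Id}$ yields $T\rho(g_i)T^{-1}=\rho(g_i)^{-1}$, i.e.\ $(T\rho(g_i))^2=-\text{Id}$. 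You use this identity implicitly but never establish it. Second, and more seriously, your disposal of the exceptional clause of Lemma \ref{lem-MagnusTraceless} is invalid as stated: for three or more matrices, having $\tr([\rho(g_a),\rho(g_b)])=2$ for all pairs of generators does \emph{not} imply reducibility (three diagonalizable matrices can pairwise share an eigenline while having no common invariant line), so in that case you cannot conclude that the character is abelian, and your claim that the triple traces are then determined is unsupported.

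Both gaps close simultaneously if you argue directly from equivariance instead of through Lemma \ref{lem-MagnusTraceless}: once $\tau_*g_i=g_i^{-1}$ is in hand, for any equivariance matrix $T$ (including $T=\pm\text{Id}$) one has $\tr\rho(g_ig_jg_k)=\tr\bigl(T\rho(g_ig_jg_k)T^{-1}\bigr)=\tr\rho\bigl((g_kg_jg_i)^{-1}\bigr)=\tr\rho(g_ig_kg_j)$, so the two roots of the Fricke quadratic coincide, $\Delta=0$ for every triple of generators, and $t_{g_ig_jg_k}=-P/2$ holds uniformly with no exceptional case. With that repair your route is correct and genuinely different from the paper's: the paper instead lifts $\chi$ to $\hat\chi\in\Y^{-1}(\chi)\subset X_{TF}(E_K)$ (using Theorem \ref{thm-NYInvolution}), writes $t_{ijk}=t_{ij}t_k-t_{g_ig_jg_k^{-1}}$ with $g_ig_jg_k^{-1}=[m_im_1^{-1}][m_jm_k^{-1}]$, and expands the resulting four-meridian trace via Lemma 4.1.1 of \cite{GM}, where trace-freeness of the meridians kills every triple-trace term. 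The paper's route gets a uniform polynomial formula with no case analysis at the cost of passing through $E_K$; your (repaired) argument stays entirely inside $X(\dc K)$ and is comparably short.
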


\begin{proof}We have seen already that $g_2,\ldots,g_N$ generate $\pi_1(\dc K)$, so by \cite{M}, as discussed in Section \ref{SecBGCharVar}, the coordinates of $\chi$ in $X^\tau(\dc K)$ are given by $t_i(\chi) = t_{1i}(\chi)$, $2\le i\le N$, $t_{ij}(\chi)=t_{g_ig_j}(\chi)$, $2\le i<j\le N$, and $t_{ijk}(\chi)$, $2\le i<j<k\le N$. 

Let $\hat\chi \in \Y^{-1}(\chi)$. Using the notation of Proposition \ref{prop-indextwo-to-dcK}, the value of $\bar t_{ij}$ on $\chi$ is given by $\hat\chi(m_im_j^{-1})$ and $t_{ij}$ is a polynomial in $\bar t_{1i},\bar t_{1j},$ and $\bar t_{ij}$. Now for $2\le i<j<k\le N$ we have $t_{ijk} = t_{ij}t_k - t_{g_ig_jg_k^{-1}}$. But $g_ig_jg_k^{-1} = [m_im_1^{-1}][m_jm_k^{-1}]$ and so $t_{g_ig_jg_k^{-1}}(\chi)$ is given by the trace function on $m_im_1^{-1}m_jm_k^{-1}$ evaluated on $\hat\chi$. But, consulting \cite[Lemma 4.1.1]{GM} for example, this is a polynomial in the $t_{m_im_j}$ on $\hat\chi$ {--} the three-index trace functions in this polynomial expression are all multiplied by zero since $\hat\chi$ is trace-free. 
\end{proof}

\section{Main Results}
\label{sec-results}

We are now ready to prove some of our main results. For a knot diagram of $D$, let $A^3(D)$ be the algebraic subset of $A(D)$ of augmentations with rank at most 3. Also, we denote the restriction of the dual $\Z^*\co X(\dc K)\to A(D)$ to $X^\tau(\dc K)$ by $\Z^\tau$. Recall the map $\Y$ from Section \ref{sec-knotgroup-to-dcK}.

\begin{thm}Let $D$ be a knot diagram representing $K$. There is a map $\X\co A^3(D) \to X_{TF}(E_K)$ such that $\Y\circ\X$ is a polynomial map and is the inverse of $\Z^\tau$. Thus $A^3(D)$ is equivalent to $X^\tau(\dc K)$.
\label{thm-varietycorrespondence}
\end{thm}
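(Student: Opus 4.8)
The plan is to build the map $\X$ on coordinates using the presentation of $X_{TF}(E_K)$ from equations (\ref{F2}), (\ref{H}), (\ref{R}) and the fact (Lemma \ref{lem-principalminors}) that a rank-$r$ reflective augmentation has a non-degenerate $r\times r$ principal submatrix of $C_D(\ve)$. Concretely, given $\ve\in A^3(D)$, set $q\in\bb C^{\binom n2}$ by $x_{ab}(q) := \ve_{ab}$. First I would check that $q$ satisfies (\ref{F2}) — this is immediate since the defining equations of $A(D)$ are exactly the polynomials $x_{lj}+x_{lk}-x_{li}x_{ij}$ — and that it satisfies (\ref{R}); the latter follows from Proposition \ref{prop-augtotrace} together with Corollary \ref{cor-switchcolumn}, since at a rank-$\le 3$ augmentation one can realize the relevant $\ve_{rs}$ as traces $\tr(A_rA_s^{-1})$ of $\SL_2\bb C$ matrices, making (\ref{R}) an instance of Lemma \ref{lem-GMTraceMatrix}. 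Then I would invoke Lemma \ref{lem-SqrootCorrespondence}: since $q$ satisfies (\ref{F2}) and (\ref{R}), there is a unique $p\in X_{TF}(E_K)$ with $x_{ab}(p)=x_{ab}(q)$ and a prescribed choice of square root for one triple coordinate. We define $\X(\ve):=p$. (The square-root ambiguity is precisely the $\iota$-orbit, consistent with Theorem \ref{thm-NYInvolution}; the rank $\le 3$ hypothesis guarantees the $T(\mathbf a,\mathbf a)$ are the only obstruction, i.e.\ that the Fricke-type equation (\ref{H}) can be met — this is where rank $\le 3$ is essential, because a rank $4$ augmentation would force three-index trace data not determined by the two-index data.)

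Next I would verify that $\Y\circ\X$ is a polynomial map and equals $(\Z^\tau)^{-1}$. For the polynomiality: by Proposition \ref{prop-surjectiononDCgroup} and Lemma \ref{lem-CharDetermination}, a character in $X^\tau(\dc K)$ is determined polynomially by the values $\bar t_{ij}$ on the generators $g_ig_j^{-1}$, $1\le i,j\le N$; and by construction $\bar t_{ij}\big(\Y(\X(\ve))\big) = x_{ij}(\X(\ve)) = \ve_{ij} = \ve(c_{ij}) = \ve(\gamma_{ij})$, which is a coordinate function on $A(D)$. (One must also use Proposition \ref{prop-generatebyfewer} — that for any $\ve\in A(D)$ a maximal non-degenerate submatrix of $C_D(\ve)$ lives in the principal block on rows $1,\dots,N$ — so that the characters really are pinned down by the $\gamma_{ij}$ with $i,j\le N$, matching the generators $g_j$.) Thus $\Y\circ\X$ sends the coordinate function $t_{\wt\gamma_{ij}}$ back to $c_{ij}$, which is exactly the statement that it is a two-sided inverse to $\Z^\tau$ on the level of coordinate rings: $\Z^\tau$ sends $\gamma_{ij}\mapsto \bar t_{ij}$ (Proposition \ref{prop-surjectiononDCgroup}), and dually $(\Y\circ\X)^*$ inverts this.

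To close the argument, I would check the two compositions are identities. That $\Z^\tau\circ(\Y\circ\X)=\mathrm{id}_{A^3(D)}$ is immediate from the coordinate computation above. For the other direction, $\Y\circ\X\circ\Z^\tau=\mathrm{id}$ on $X^\tau(\dc K)$: given $\chi\in X^\tau(\dc K)$, Lemma \ref{lem-CharDetermination} says $\chi$ is determined by the $\bar t_{ij}(\chi)$; applying $\Z^\tau$ records exactly these as an augmentation $\ve$, then $\X(\ve)$ is the unique trace-free character with those two-index data (and with the correct square root — here I'd need the argument of Lemma \ref{lem-SqrootCorrespondence} to also show $\ve$ has rank $\le 3$, which holds because $\chi\in X^\tau(\dc K)$ comes from a trace-free $\SL_2\bb C$ representation, forcing the relevant trace matrix to have rank $\le 3$ by the discussion following Lemma \ref{lem-MagnusTraceless} and Lemma \ref{lem-GMTraceMatrix}), so $\Y(\X(\ve))=\chi$. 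Hence $A^3(D)\cong X^\tau(\dc K)$ as affine varieties, via the mutually inverse polynomial maps $\Z^\tau$ and $\Y\circ\X$.

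The main obstacle I anticipate is showing $\Y\circ\X$ is genuinely \emph{polynomial} (not merely a well-defined set map) and that $\X$ itself lands in $X_{TF}(E_K)$ — i.e.\ reconciling the square-root choice in Lemma \ref{lem-SqrootCorrespondence} with the $(\sqrt{-1})^{\mathrm{lk}}$-twist defining $\Y$, so that the three-index coordinates produced by $\X$ and then pushed through $\Y$ assemble into the polynomial expressions of Lemma \ref{lem-CharDetermination}. The rank-$\le 3$ hypothesis is doing real work precisely at this point: it is what makes the three-index data a \emph{function} (up to the $\iota$-orbit) of the two-index data, so that $\Y$ kills the ambiguity and the composite becomes single-valued and polynomial.
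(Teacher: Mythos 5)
Your proposal is correct and follows the paper's architecture almost exactly: define $\X$ coordinate-wise via Lemma \ref{lem-SqrootCorrespondence} (rank $\le 3$ guaranteeing Nagasato's equations hold), then use Propositions \ref{prop-surjectiononDCgroup}, \ref{prop-indextwo-to-dcK} and \ref{prop-generatebyfewer} together with Lemma \ref{lem-CharDetermination} to see that $\Y\circ\X$ is polynomial and inverse to $\Z^\tau$. The one step where you genuinely diverge is the proof that $\Z^\tau(\chi)$ has rank at most three for $\chi\in X^\tau(\dc K)$: you deduce it from $\chi=\Y(\hat\chi)$ for a trace-free character $\hat\chi$ (Theorem \ref{thm-NYInvolution}), so the two-index matrix is $\left[\tr(M_iM_j^{-1})\right]$ with each $M_i$ trace-free; since then $\tr(M_iM_j^{-1})=\tfrac12\bigl(\tr(M_iM_j^{-1})-\tr(M_iM_j)\bigr)$, Lemma \ref{lem-GMTraceMatrix} kills every size-four minor (equivalently, the matrix is a Gram matrix of vectors in the three-dimensional space of trace-free matrices), and rank $\le 3$ follows. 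This is viable and arguably shorter than the paper's argument, which first bounds the rank by four via Corollary \ref{cor-tracematrixisnotrank5} and then uses $\tau$-equivariance, Lemma 19 of \cite{NY} and Lemma \ref{lem-MagnusTraceless} to kill the size-four principal minors; the paper's route works directly with a representation of $\pi_1(\dc K)$ and does not invoke surjectivity of $\Y$ at this point, while yours trades that input for a cleaner determinant identity. Two small points to tighten: your verification of (\ref{R}) at the point $q$ via Proposition \ref{prop-augtotrace} and Corollary \ref{cor-switchcolumn} is an unnecessary detour (and Proposition \ref{prop-augtotrace} only realizes traces for quadruples containing $1,2,k$, under hypotheses requiring a relabeling) {--} since $\text{rank}(\ve)\le 3$, every $4\times4$ minor of the symmetric matrix $\left[\ve_{ab}\right]$ vanishes, which is exactly what (\ref{R}) asks; and since Propositions \ref{prop-surjectiononDCgroup}--\ref{prop-generatebyfewer} and Lemma \ref{lem-CharDetermination} are stated for $D=D(w(\beta))$, you should record the reduction to a braid-closure diagram (different diagrams give algebraically equivalent $A(D)$), as the paper does at the outset.
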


\begin{proof}
We may assume that $D=D(w(\beta))$ for some braid $\beta\in B_N$ which represents $K$. Say that $D$ has $n$ crossings. Let $\ve = (\ve_{12},\ve_{13},\ldots,\ve_{n-1,n})$ be a point of $A^3(D)$. If rank$(\ve) = 3$ then choose $1\le a_1<a_2<a_3\le N$ such that $\det\left[\ve_{ij}\right]_{i,j=a_1,a_2,a_3} = T({\bf a},{\bf a})\big|_{x_{ij} = \ve_{ij}}\ne 0$ and choose a square root of $\delta({\bf a})$ (we may restrict to the range $1,\ldots, N$ by Proposition \ref{prop-generatebyfewer}). By Proposition \ref{lem-SqrootCorrespondence}, using that no size four minor of $\left[\ve_{ij}\right]_{1\le i,j\le n}$ is non-zero, there is a unique $q\in X_{TF}(E_K)$ with $x_{a_1a_2a_3}(q) = \sqrt{\delta({\bf a})}$ and $x_{ij}(q) = \ve_{ij}$ for $1\le i<j\le n$. Take $q = \X(\ve)$. If rank$(\ve) < 3$ then $\X(\ve) = (\ve_{12},\ldots,\ve_{n-1,n},0,\ldots,0)$.

We will use the notation from Proposition \ref{prop-surjectiononDCgroup} and Proposition \ref{prop-indextwo-to-dcK}. Our definition of $\X$ gives coordinate functions $x_{ij}\circ\X(\ve) = \ve_{ij}$ for $1\le i< j\le n$. So associated to the character $\X(\ve)$ there is a trace-free representation $\rho\in R(E_K)$ so that $\tr(\rho(m_im_j^{-1})) = \ve_{ij}$. By Proposition \ref{prop-indextwo-to-dcK} we have $\Y\circ\X(\ve)$ evaluates to $\ve_{ij}$ on $[m_im_j^{-1}]=g_ig_j^{-1} \in \pi_1(\dc K)$ for any $1\le i<j\le N$ (recall that $g_1$ is the identity here). 

With coordinates $(\{t_i=t_{g_i},2\le i\le n\},\{\bar t_{ij}=t_{g_ig_j^{-1}},2\le i<j\le n\},\{t_{ijk}=t_{g_ig_jg_k},2\le i<j<k\le n\})$ on $X(\dc K)$, we get the $t_{i}$ coordinate of $\Y\circ\X(\ve)$ equal to $\ve_{1i}$ and the $\bar t_{ij}$ coordinate equal to $\ve_{ij}$. So by Lemma \ref{lem-CharDetermination} $\Y\circ\X$ is a polynomial map. For each $1\le i<j \le N$
		\[	\Z^\tau\left( \Y\circ\X(\ve) \right)(c_{ij}) = \bar t_{ij} (\Y\circ\X(\ve)) = \ve_{ij}. \]
By Proposition \ref{prop-generatebyfewer} this implies that $\Z^\tau\circ(\Y\circ\X)$ is the identity. To see that $(\Y\circ\X)\circ\Z^\tau$ is the identity, suppose the rank of $\Z^\tau(\chi)$ is at most three for $\chi\in X^\tau(\dc K)$. Since $\Z^\tau(\chi)(c_{ij}) = \bar t_{ij}(\chi)$, the discussion above says that $(\Y\circ\X)(\Z^\tau(\chi))$ takes value $\bar t_{ij}(\chi)$ on $g_ig_j^{-1}$, and so it agrees with $\chi$ on $g_ig_j^{-1}$, $1\le i<j\le N$. Lemma \ref{lem-CharDetermination} implies $(\Y\circ\X)(\Z^\tau(\chi)) = \chi$. We are left to see that $\text{rank}(\Z^\tau(\chi))\le 3$.

By Corollary \ref{cor-tracematrixisnotrank5} $C_D(\Z^\tau(\chi)) = \left[\bar t_{ij}(\chi)\right]$ can have no non-zero principal minor of size greater than four, hence $\Z^\tau(\chi)$ has rank at most four by Lemma \ref{lem-principalminors}. 

Using our generating set $g_2,\ldots,g_N$ for $\pi_1(\dc K)$, as in Proposition \ref{prop-indextwo-to-dcK}, for some $\rho$ with $\chi = \chi_\rho$ set $A_i=\rho(g_i)$. Since $\chi$ is $\tau$-equivariant there is a $T\in\SL_2\bb C$ so that $TA_iT^{-1} = \rho(m_1g_im_1^{-1})$ for each $i$ and by \cite[Lemma 19]{NY} either $T = \pm\text{Id}$ or $\tr(T) = 0$.

In the former case, $\chi$ is the image under $\Y$ of an abelian representation which means it is the character of the trivial representation. In the latter case, by Lemma \ref{lem-MagnusTraceless} either the trace of the commutators of pairs $A_i, A_j$ are each 2, or the Fricke discriminant $\D(A_i,A_j,A_k) = 0$ for each triple $1<i,j,k\le N$. If the traces of the all the commutators equal 2 then $\chi$ agrees with the character of an abelian representation, in which case $\D(A_i,A_j,A_k) = 0$ for each $i,j,k$ also. Now, setting $A_1$ to the identity, choose $k_1=1$ and $2\le k_i\le N, i=2,3,4$. Then
		\[\det\left[\bar t_{k_ik_j}(\chi)\right] = \det\left[\tr(A_{k_i}A_{k_j}^{-1})\right]\] 
which is zero since $\D(A_{k_2},A_{k_3},A_{k_4}) = 0$. This implies that each size 4 principal minor of $C_D(\Z^\tau(\chi))$ is zero which, as we already know the rank of $C_D(\Z^\tau(\chi))$ is at most four, implies in turn that $\Z^\tau(\chi)$ has rank three or less by Lemma \ref{lem-principalminors}.
\end{proof}

By Proposition \ref{prop-NYfixedpts} the involution $\iota\co X_{TF}(E_K)\to X_{TF}(E_K)$ has precisely the trace-free abelian character and the irreducible metabelian characters as fixed points. 

\begin{cor}Augmentations $\hc(K)\to\bb C$ with rank two are in bijective correspondence with irreducible metabelian characters of $\pi_1(E_K)$. Non-metabelian characters in $X_{TF}(E_K)$ are in 2-1 correspondence with the rank three augmentations of the abelian cord ring.
\label{thm-IrrMetabelian}
\end{cor}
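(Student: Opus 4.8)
The plan is to read off the corollary directly from Theorem \ref{thm-varietycorrespondence} together with the known classification of the fixed points of $\iota$. Since Theorem \ref{thm-varietycorrespondence} gives a bijection $\X\colon A^3(D)\to X_{TF}(E_K)$ (with inverse essentially $\Z^\tau\circ\Y$), and since a point $\ve\in A^3(D)$ is the same data as a reflective augmentation of rank at most $3$, we just need to track how the \emph{rank} of $\ve$ matches up with the character-theoretic properties of $\X(\ve)\in X_{TF}(E_K)$. Recall from the end of Section \ref{sec-TFchars} that a point $q\in X_{TF}(E_K)$ is a fixed point of $\iota$ if and only if $x_{abc}(q)=0$ for all $a<b<c$, equivalently the matrix $[x_{ab}(q)]$ has rank $<3$; and by Proposition \ref{prop-NYfixedpts} the $\iota$-fixed locus consists of the finitely many irreducible metabelian characters together with the single trace-free abelian character $m\mapsto \mathrm{diag}(\sqrt{-1},-\sqrt{-1})$.

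First I would handle the rank-two case. If $\ve$ has rank $2$, then by construction $\X(\ve)$ has $x_{ab}(\X(\ve))=\ve_{ab}$, so $[x_{ab}(\X(\ve))]$ is $C_D(\ve)$ which has rank $2<3$; hence $x_{abc}(\X(\ve))=0$ for all triples and $\X(\ve)$ is $\iota$-fixed. Conversely $\ve$ of rank $0$ or $1$: rank $1$ forces $\ve_{ab}=2$ for all $a,b$ (Lemma \ref{lem-rank1aug}), giving the trivial/abelian character with trivial meridian — this is \emph{not} irreducible metabelian, and in fact corresponds to the character $m\mapsto\pm\mathrm{Id}$ rather than to the $\sqrt{-1}$-diagonal one, so I should note which abelian character rank $1$ produces and that it is not metabelian. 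The trace-free abelian character $m\mapsto \mathrm{diag}(\sqrt{-1},-\sqrt{-1})$ has $x_{ab}=\tr(\rho(m_am_b^{-1}))=2$ for all $a,b$ as well, so actually the rank here is also $1$; I need to be a little careful: both of these abelian characters collapse to the same point $x_{ab}\equiv 2$ under Nagasato's coordinates, and it is this point that $\X$ of the unique rank-one augmentation hits. So the statement "rank two $\leftrightarrow$ irreducible metabelian" should be read as: among the $\iota$-fixed characters, removing the unique abelian one leaves exactly the irreducible metabelian ones, and these are hit bijectively by the rank-two augmentations (rank $1$ maps to the abelian point, rank $0$ does not occur for a knot). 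Since the irreducible metabelian characters are finite in number, $|\Delta_K(-1)|-1)/2$ by \cite{klassen}, this is a genuine finite bijection.

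Next the rank-three case. If $\ve$ has rank exactly $3$, then $C_D(\ve)=[x_{ab}(\X(\ve))]$ has rank $3$, so $\X(\ve)$ is \emph{not} $\iota$-fixed, i.e. it is a non-metabelian (and non-trivial-abelian) character in $X_{TF}(E_K)$; and the construction of $\X$ involved a choice of a square root of $\delta(\mathbf a)$, the two choices giving the two points of $X_{TF}(E_K)$ in the $\iota$-orbit over $\ve$ (by Lemma \ref{lem-SqrootCorrespondence} flipping the sign of $x_{a_1a_2a_3}$ flips the signs of all $x_{abc}$, which is exactly the action of $\iota$). Conversely, given a non-metabelian $q\in X_{TF}(E_K)$, its image $\Z^\tau(\Y(q))\in A(D)$ has $C_D$-entries $x_{ab}(q)$, whose matrix has rank $\ge 3$ since $q$ is not $\iota$-fixed, and rank $\le 3$ because $q\in X_{TF}(E_K)=\X(A^3(D))$ forces all size-$4$ minors of $[x_{ab}(q)]$ to vanish (this is exactly the content used in the proof of Theorem \ref{thm-varietycorrespondence}, via Corollary \ref{cor-tracematrixisnotrank5} and Lemma \ref{lem-principalminors}, combined with the Fricke-discriminant/$\tau$-equivariance argument showing every such character forces rank $\le 3$). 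So $q$ and $\iota(q)$ both map to the same rank-three augmentation, and distinct $\iota$-orbits of non-metabelian characters give distinct augmentations since $\X$ is injective on $A^3(D)$. That yields the asserted $2$:$1$ correspondence.

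The main obstacle I anticipate is the bookkeeping around the low-rank/abelian characters: making sure the rank-one augmentation, the two flavors of trace-free abelian $\SL_2\mathbb C$ character, and the "irreducible metabelian" class are correctly separated, since Nagasato's coordinates are blind to the difference between the two abelian characters (both give $x_{ab}\equiv 2$) and one must invoke Proposition \ref{prop-NYfixedpts} to know the $\iota$-fixed locus is exactly $\{\text{abelian}\}\sqcup\{\text{irreducible metabelian}\}$. Everything else is a direct translation through the bijection $\X$ of Theorem \ref{thm-varietycorrespondence}: rank of the augmentation $=$ rank of $C_D(\ve)$ $=$ rank of $[x_{ab}]$, and rank $<3$ versus rank $=3$ is precisely $\iota$-fixed versus not, with the sign-of-square-root ambiguity accounting for the $2$:$1$ in the non-fixed case and its absence (a genuine fixed point, hence a single preimage) in the fixed case.
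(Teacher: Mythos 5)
Your argument is correct and follows essentially the same route as the paper: it translates everything through $\X$ and $\Z^\tau\circ\Y$, identifies the $\iota$-fixed locus with rank $<3$ via Nagasato's coordinates, and invokes Proposition \ref{prop-NYfixedpts} and Theorem \ref{thm-NYInvolution} exactly as the published proof does. The only quibble is your momentary claim that the rank-one augmentation corresponds to $m\mapsto\pm\mathrm{Id}$ --- that character is not trace-free and does not lie in $X_{TF}(E_K)$, so the point $x_{ab}\equiv 2$ is unambiguously the $\sqrt{-1}$-diagonal abelian character --- but you correct this yourself and it does not affect the conclusion.
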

\begin{proof}The definition of $\X$ makes it clear that $\X(\ve)$ is fixed under $\iota$ if and only if rank$(\ve) < 3$. (the coordinate $x_{ij}(\chi_\rho) = \tr(\rho(m_im_j^{-1})) = \tr(\iota(\rho)(m_im_j^{-1}))$, but $x_{ijk}(\chi_\rho) = \tr(\rho(m_im_jm_k)) = -\tr(\iota(\rho)(m_im_jm_k))$)

As $\X$ is injective and surjective on orbits of $\iota$ (since $\Y\circ\X$ is onto $X^\tau(\dc K)$, and the rank 1 reflective augmentation ($\ve(c_{ij}) = 2$ for all $i,j$) gives the abelian representation, we see that $\X$ sets up a 1-1 correspondence between rank 2 augmentations in $A(D)$ and irreducible metabelian characters in $X_{TF}(E_K)$. Moreover, $\X$ sends a rank three augmentation to a non-metabelian character and $\Z^\tau\circ\Y$ is 2-1 on non-metabelian characters by Theorem \ref{thm-NYInvolution} and that $\Z^\tau$ is injective on $X^\tau(\dc K)$.
\end{proof}

\begin{cor}Let $K$ be a knot with $|\D_K(-1)| = 1$. Then $K$ has a rank 3 augmentation $\ve\co \hc(K)\to\bb C$ and no rank 2 augmentations.
\label{cor-Det1}
\end{cor}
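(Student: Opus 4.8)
The plan is to prove the two assertions separately, letting Corollary~\ref{thm-IrrMetabelian} carry most of the weight.

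\textbf{No rank $2$ augmentations.} By the first half of Corollary~\ref{thm-IrrMetabelian}, reflective augmentations of $K$ of rank $2$ are in bijection with the irreducible metabelian characters of $\pi_1(E_K)$, and the number of the latter is $(|\D_K(-1)|-1)/2$ by the count of \cite{klassen} recalled after Proposition~\ref{prop-NYfixedpts}. When $|\D_K(-1)|=1$ this number is $0$, so there is no rank $2$ reflective augmentation. This step is immediate.

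\textbf{A rank $3$ augmentation exists.} By the second half of Corollary~\ref{thm-IrrMetabelian} it suffices to exhibit a character in $X_{TF}(E_K)$ that is not metabelian. I would obtain one from the Kronheimer--Mrowka input recalled in the aside after Theorem~\ref{thm-correspondenceAugRep}: for the (nontrivial) knot $K$ there is an irreducible $\sg\co\pi_1(E_K)\to SU(2)$ with $\sg(m)$ diagonal and eigenvalues $\pm\sqrt{-1}$; then $\tr\sg(m)=0$, so $\chi_\sg\in X_{TF}(E_K)$. It remains to see $\chi_\sg$ is not metabelian. By Proposition~\ref{prop-NYfixedpts} the metabelian characters in $X_{TF}(E_K)$ are precisely the $\iota$-fixed ones: the trace-free abelian character together with the irreducible metabelian characters, the latter being the binary-dihedral $SU(2)$ characters. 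Now $\chi_\sg$ is the character of an irreducible representation, so it is not the abelian character --- otherwise the abelian representation realizing that character would be conjugate to the irreducible $\sg$, by the rigidity recalled in Section~\ref{SecBGCharVar} --- and there are no irreducible metabelian characters at all, by the count $(|\D_K(-1)|-1)/2=0$ again. Hence $\chi_\sg$ is non-metabelian, and Corollary~\ref{thm-IrrMetabelian} associates to it a reflective augmentation $\ve\co\hc(K)\to\bb C$ of rank $3$.

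The genuinely substantive ingredient is the Kronheimer--Mrowka existence theorem; everything else is assembling the correspondences already established, with the determinant hypothesis used twice --- to kill rank $2$ augmentations and to rule out binary-dihedral behaviour of $\chi_\sg$. The one point needing a remark is the unknot, which satisfies $|\D_K(-1)|=1$ but has no rank $3$ augmentation; this is excluded since the Kronheimer--Mrowka input, and the intended scope of the statement, concern nontrivial knots.
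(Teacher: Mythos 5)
Your argument is correct and follows essentially the same route as the paper: invoke the Kronheimer--Mrowka irreducible traceless $SU(2)$ representation, use the count $(|\D_K(-1)|-1)/2=0$ of irreducible metabelian characters to rule out rank $2$ augmentations and to see the resulting character is non-metabelian, and then apply Corollary~\ref{thm-IrrMetabelian}. Your explicit handling of the abelian character and the unknot caveat are details the paper elides, but the substance is identical.
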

\begin{proof}By \cite{KM} there is an irreducible representation $\rho\co \pi_1(E_K)\to SU(2)$, such that $\rho(m)$ is a diagonal matrix with eigenvalues $\pm\sqrt{-1}$, which is not binary-dihedral as the determinant is 1. Since $\chi_\rho\in X_{TF}(E_K)$, Theorem \ref{thm-IrrMetabelian} implies there is a rank 3 augmentation and additionally no rank 2 augmentations since there are no irreducible metabelian characters.
\end{proof}
\begin{cor}Let $U$ be the unknot in $S^3$. If $\hc(K)\cong\hc(U)$ then $K=U$.
\label{cor-Udetection}
\end{cor}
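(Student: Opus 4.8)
The plan is to reduce the statement to a count of reflective augmentations. First I would record that $\hc(U)\cong\bb Z$: applying Theorem \ref{thm-knotdiagram} to a diagram of $U$ with no crossings gives $\cl A^{ab}_0=\bb Z$ and $\cl I_D=0$ (alternatively, $U$ is $2$-bridge and $\dc U=S^3$, so $\hc(U)\otimes\bb C\cong\bb C[X(S^3)]=\bb C$). Thus if $\hc(K)\cong\hc(U)$ then $\hc(K)\cong\bb Z$, and since the only ring homomorphism $\bb Z\to\bb C$ is the inclusion, $K$ admits exactly one reflective augmentation (recall that reflective augmentations of $K$ correspond to ring homomorphisms $\hc(K)\to\bb C$). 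On the other hand, the assignment $c_{ij}\mapsto 2$ on the generators of $\cl A^{ab}_n$ kills each relation $c_{lj}+c_{lk}-c_{li}c_{ij}$ in (\ref{eqn-diagram-defneqns}), so it descends to a reflective augmentation $\ve_1$ whose matrix $C_D(\ve_1)$ is the all-$2$'s matrix, hence of rank $1$. Therefore the unique reflective augmentation of $K$ has rank $1$, i.e.\ $\mathfrak r^{ab}(K)=\{1\}$, and in particular $K$ has no reflective augmentation of rank $2$ or rank $3$.

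Now I would argue by contradiction, assuming $K\neq U$. By Kronheimer--Mrowka \cite[Corollary 7.17]{KM}, since $K$ is nontrivial there is an irreducible representation $\sg\co\pi_1(E_K)\to SU(2)$ with $\sg(m)$ diagonal with eigenvalues $\pm\sqrt{-1}$; in particular $\tr\sg(m)=0$, so $\sg$ is trace-free and $\chi_\sg\in X_{TF}(E_K)$. I then split on whether $\chi_\sg$ is metabelian. If it is, then, being the character of an irreducible representation, it is an irreducible metabelian character in $X(E_K)$, so Corollary \ref{thm-IrrMetabelian} produces a reflective augmentation of $K$ of rank $2$. If it is not metabelian, then it is a non-metabelian character in $X_{TF}(E_K)$, so Corollary \ref{thm-IrrMetabelian} produces a reflective augmentation of rank $3$. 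Either alternative contradicts $\mathfrak r^{ab}(K)=\{1\}$, so we must have $K=U$.

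The argument is essentially formal once the correspondences of Section \ref{sec-results} are available; the main points to get right are (i) that the hypothesis is only an \emph{abstract} ring isomorphism, so one should work with $\mathrm{Hom}(\hc(K),\bb C)$ rather than any cord structure, and (ii) that ruling out a nontrivial $K$ requires the \emph{existence} half of \cite{KM} to guarantee a trace-free irreducible $SU(2)$ representation of $\pi_1(E_K)$ — this is the one genuinely non-formal input, and it is where the possibility that a nontrivial knot carries ``no interesting meridian-traceless characters'' is excluded. (One could instead route through Theorem \ref{thm-allequivariant-isomorphism} to get $\bb C[X(\dc K)]\cong\bb C$ and then invoke the Smith conjecture, but appealing to \cite{KM} together with Corollary \ref{thm-IrrMetabelian} is more direct and stays within the circle of ideas already used here.)
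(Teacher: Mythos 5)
Your proof is correct and follows essentially the same route as the paper: both invoke Kronheimer--Mrowka to produce an irreducible trace-free $\SU(2)$ representation of a nontrivial $K$, feed it through Corollary \ref{thm-IrrMetabelian} to obtain a reflective augmentation of rank $2$ or $3$ in addition to the rank-$1$ augmentation, and conclude that $\hc(K)$ has too many homomorphisms to $\bb C$ to be isomorphic to $\hc(U)\cong\bb Z$. Your extra care in phrasing everything in terms of $\mathrm{Hom}(\hc(K),\bb C)$ (so that only the abstract ring structure is used) is a nice touch but does not change the substance; the only slip is cosmetic ($\cl A^{ab}_1$, not $\cl A^{ab}_0$, for the $0$-crossing diagram, which has one component).
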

\begin{proof}Suppose $K$ is not the unknot. By the result of \cite{KM} cited in Corollary \ref{cor-Det1}, for any diagram $D$ of $K$, it follows from Corollary \ref{thm-IrrMetabelian} that $A^3(D)$ at least two points: one corresponding to the unique rank 1 augmentation and another for the irreducible $\SU(2)$ character. So there cannot be an isomorphism $\hc(K)\cong\hc(U)$ as $A(U)$ is a single point.
\end{proof}
There are distinct 2-bridge knots with isomorphic cord rings by \cite[Theorem 4.3]{Ng05b}. We also have the following.
\begin{thm}Suppose that $\text{max}\ \mathfrak r^{ab}(K)\le 3$. Then $X^\tau(\dc K) = X(\dc K)$ and $(\hc(K)\otimes\bb C)/\sqrt0 \cong \bb C[X(\dc K)]$.
\label{thm-equivariant}
\end{thm}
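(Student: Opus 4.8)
The plan is to reduce the theorem to a single assertion: under the hypothesis the dual map $\Z^*\co X(\dc K)\to A(D)$ is injective. Granting that, Theorem \ref{thm-varietycorrespondence} supplies everything else. Note first that $\text{max}\,\mathfrak r^{ab}(K)\le 3$ is exactly the statement that every reflective augmentation of $K$ has rank at most $3$, i.e. $A(D)=A^3(D)$ for any diagram $D$. Fix a braid $\beta$ with $\hat\beta=K$, set $D=D(w(\beta))$, and adopt the notation of Propositions \ref{prop-surjectiononDCgroup}--\ref{prop-indextwo-to-dcK}: $g_1=\text{Id}$, $g_2,\dots,g_N$ generate $\pi_1(\dc K)$, $c_{ij}=\gamma_{ij}$ in $\hc(K)$ for $1\le i<j\le N$, and $\Z(\gamma_{ij})=\bar t_{ij}=t_{g_ig_j^{-1}}$.

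To prove injectivity, suppose $\chi_1,\chi_2\in X(\dc K)$ satisfy $\Z^*(\chi_1)=\Z^*(\chi_2)=:\ve$ and write $\chi_m=\chi_{\rho_m}$, $A_i^{(m)}=\rho_m(g_i)$. Evaluating $\Z^*(\chi_m)$ on the $c_{ij}$, $1\le i<j\le N$, gives $\tr\big(A_i^{(m)}(A_j^{(m)})^{-1}\big)=\ve(c_{ij})$ for both $m$, hence $\chi_1$ and $\chi_2$ agree on every $t_{g_i}$ and every $t_{g_ig_j}$ (using $t_{g_ig_j}=t_{g_i}t_{g_j}-\bar t_{ij}$). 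Since $\ve\in A(D)=A^3(D)$ has rank at most $3$, Proposition \ref{prop-generatebyfewer} shows the rank of $C_D(\ve)$ is realized inside the principal block on rows $1,\dots,N$, so that block has rank at most $3$ and all of its $4\times 4$ minors vanish. The principal minor on rows and columns $\{1,i,j,k\}$ equals $\det\big[\tr(X_aX_b^{-1})\big]_{1\le a,b\le 4}$ with $(X_1,X_2,X_3,X_4)=(A_i^{(m)},A_j^{(m)},A_k^{(m)},\text{Id})$, so by the identity recorded just after Lemma \ref{lem-MagnusTraceless} we obtain $\D\big(A_i^{(m)},A_j^{(m)},A_k^{(m)}\big)=0$ for all $2\le i<j<k\le N$. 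Then the Fricke relation forces $t_{g_ig_jg_k}(\chi_m)=-P/2$, the unique root of $z^2+Pz+Q$, and $P,Q$ are polynomials in the one- and two-index traces on which $\chi_1,\chi_2$ already agree. Thus $\chi_1$ and $\chi_2$ agree on all one-, two-, and three-index trace functions of $g_2,\dots,g_N$, and these generate $\bb C[X(\dc K)]$ (Section \ref{SecBGCharVar}); hence $\chi_1=\chi_2$. (When $N\le 3$ there are no triples and the one- and two-index traces alone determine the character.)

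The rest is formal. By Theorem \ref{thm-varietycorrespondence}, $\Z^\tau=\Z^*|_{X^\tau(\dc K)}$ is a bijection onto $A^3(D)=A(D)$; together with the injectivity just proved, any $\chi\in X(\dc K)$ must coincide with the unique element of $X^\tau(\dc K)$ lying over $\Z^*(\chi)$, so $X^\tau(\dc K)=X(\dc K)$. Consequently $\Z^\tau=\Z^*$ and its inverse $\Y\circ\X$ (Theorem \ref{thm-varietycorrespondence}) are mutually inverse polynomial maps between the affine varieties $X(\dc K)$ and $A(D)$, hence isomorphisms of varieties; therefore $\Z$ induces an isomorphism of coordinate rings $\bb C[A(D)]\cong\bb C[X(\dc K)]$. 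Since $(\hc(K)\otimes\bb C)/\sqrt0\cong\bb C[A(D)]$ (Section \ref{sec-HCvariety}), we conclude $(\hc(K)\otimes\bb C)/\sqrt0\cong\bb C[X(\dc K)]$.

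I expect the injectivity step to be the crux. The point is to convert the rank bound on the augmentation into the vanishing of every Fricke discriminant on a generating set of $\pi_1(\dc K)$ — this is exactly what removes the usual two-to-one ambiguity in recovering a character from its one- and two-index traces — and the hypothesis $\text{max}\,\mathfrak r^{ab}(K)\le 3$ enters precisely in forcing the $4\times 4$ minors of $C_D(\ve)$ to vanish. The two supporting facts are the sharp form of Magnus's identity (the $4\times 4$ trace determinant with an identity entry is zero iff the corresponding Fricke discriminant is zero) and Proposition \ref{prop-generatebyfewer}, which locates the rank of $C_D(\ve)$ in the block indexed by the braid strands. The small-$N$ and degenerate configurations warrant a line but present no real difficulty, since there the discriminant conditions are vacuous or the sharp identity applies verbatim.
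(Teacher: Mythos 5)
Your proof is correct, but it reaches $X^\tau(\dc K)=X(\dc K)$ by a different mechanism than the paper. Both arguments share the same core step: the hypothesis forces every reflective augmentation, in particular $\Z^*(\chi)$, to have rank at most $3$, so the $4\times 4$ principal minors of the trace matrix vanish and hence, by the identity recorded after Lemma \ref{lem-MagnusTraceless}, every Fricke discriminant $\D(A_i,A_j,A_k)$ vanishes. From there the paper proceeds representation-theoretically: it treats reducible characters separately (they come from abelian representations, hence lie in $\Y(X_{TF}(E_K))$), and for irreducible $\rho$ it invokes Lemma \ref{lem-MagnusTraceless} together with Remark \ref{rem-Tappliesmore} to produce a traceless $T$ with $(TA_i)^2=-\text{Id}$ for all $i$, then verifies $\rho([m_1g_im_1^{-1}])=T\rho(g_i)T^{-1}$ directly, so $\rho\in R^\tau(\dc K)$. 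You instead exploit the vanishing discriminant to conclude the Fricke quadratic has a double root, so the triple traces $t_{g_ig_jg_k}$ are polynomial functions of the one- and two-index traces, giving injectivity of $\Z^*$ on all of $X(\dc K)$; combined with the surjectivity of $\Z^\tau$ onto $A^3(D)=A(D)$ from Theorem \ref{thm-varietycorrespondence}, every character must already be the equivariant one over its augmentation. Your route buys a uniform treatment of reducible and irreducible characters and sidesteps the ``unless $\tr([X_i,X_j])=2$'' caveat of Lemma \ref{lem-MagnusTraceless}, at the cost of leaning harder on Theorem \ref{thm-varietycorrespondence}; the paper's route is more hands-on and produces the equivariating involution $T$ explicitly, which it reuses elsewhere (e.g.\ in Section \ref{sec-unitary}). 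One small remark: your appeal to Proposition \ref{prop-generatebyfewer} to locate the rank in the braid-strand block is unnecessary, since rank $\le 3$ already kills every $4\times4$ minor of $C_D(\ve)$.
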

\begin{proof}Each abelian representation is in the image of $\Y$, its preimage a metabelian representation, and so by \cite{NY} is in $X^\tau(\dc K)$ and the character of any reducible representation agrees with the character of an abelian representation.

Now suppose $\chi\in X(\dc K)$ is associated to an irreducible representation $\rho\co \pi_1(\dc K)\to\SL_2\bb C$. Consider a generating set $g_2,\ldots,g_N$ for $\pi_1(\dc K)$ as above and let $A_i = \rho(g_i)$ for $1< i \le N$.

A fact we have used before is that for $A_2,A_3,A_4\in\SL_2\bb C$ the discriminant $\D(A_2,A_3,A_4)$ is zero if and only if $\det[\tr(A_iA_j^{-1})]_{1\le i,j\le 4}=0$, where $A_1$ is the identity (see comments after Lemma \ref{lem-MagnusTraceless}). For each $i,j$ the trace $\tr(A_iA_j^{-1})$ is the evaluation of $\Z(c_{ij})$ on $\chi_\rho$, and so $\left|\tr(A_iA_j^{-1})\right|$ is $\Z^\tau(\chi_\rho)\left(\det[c_{ij}]_{1\le i,j\le 4}\right) = 0$, since we assumed no reflective augmentation with rank more than three exists.

Thus by Lemma \ref{lem-MagnusTraceless} there is a $T\in\SL_2\bb C$ with $\tr(T) = 0$ such that $(TA_i)^2 = -\text{Id}$ for $i=2,3,4$, provided the trace of $[A_i,A_j]$ is not 2 for some pair $i,j$, which we may assume is true, for $i=2,j=3$ say, as $\rho$ is irreducible \cite{CS}. By Remark \ref{rem-Tappliesmore}, since $\D(A_2,A_3,A_j)=0$ for $4\le j\le n$ (for the same reason), we have $(TA_j)^2=-\text{Id}$ for $4\le j\le n$ also.

Recall $m_1m_i^{-1}\in \wt\pi$ satisfies $[m_1m_i] = [m_1m_i^{-1}]=g_i\in\pi_1(\dc K)$, so we have for any $2\le i\le N$,
			\[\rho([m_1m_1m_i^{-1}m_1^{-1}]) = \rho([m_1m_i])^{-1} = \rho(g_i)^{-1} = -(T\rho(g_i))^{-1}T^{-1} = T\rho(g_i)T^{-1}.\]
As the $g_i$ generate this means $\rho\in R^\tau(\dc K)$, so $\chi\in X^\tau(\dc K)$.

Thus $\text{max}\ \mathfrak r^{ab}(K)\le 3$ implies that $X^\tau(\dc K) = X(\dc K)$. We then obtain the isomorphism $\hc(K)\otimes\bb C / \sqrt0 \cong \bb C[X(\dc K)]$ from Theorem \ref{thm-varietycorrespondence} and the Nullstellensatz.
\end{proof}

When $K$ has reflective augmentations of sufficiently high rank we can see that $\Z\co(\hc(K)\otimes\bb C)/\sqrt0 \to \bb C[X(\dc K)]$ will not be an isomorphism.

\begin{prop}Suppose that $K$ has a reflective augmentation of rank at least five. Then there is $b\not\in \sqrt0$ with $b\in\ker\Z$, so $\Z\co \hc(K)\otimes\bb C/\sqrt0\to\bb C[X(\dc K)]$ is not an isomorphism.
\label{prop-NontrivialKernel}
\end{prop}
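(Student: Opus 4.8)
The plan is to exhibit an explicit element of $\cl A^{ab}_n\otimes\bb C$ whose image in $\hc(K)\otimes\bb C$ is not nilpotent but which maps to zero under $\Z$. By hypothesis there is a reflective augmentation $\ve$ with $\text{rank}(\ve)\ge 5$; fix a diagram $D=D(w(\beta))$ with $n$ crossings (so $C_D(\ve)$ is an $n\times n$ symmetric matrix of rank at least five). The natural candidate for $b$ comes from Corollary \ref{cor-tracematrixisnotrank5}: for any choice of five indices $1\le i_1<i_2<\dots<i_5\le n$ the $5\times 5$ trace matrix $\left[t_{g_{i_a}g_{i_b}^{-1}}\right]$ of any $\SL_2\bb C$ representation has vanishing determinant, since $\Z^\tau(\chi)(c_{i_ai_b}) = \bar t_{i_ai_b}(\chi)$ and $X^\tau(\dc K)$ accounts for all characters arising from $\tau$-equivariant representations. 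So I would set
\[
b \;=\; \det\!\left[c_{i_ai_b}\right]_{1\le a,b\le 5}\ \in\ \hc(K)\otimes\bb C,
\]
with $c_{i_ai_a}$ read as $2$ and $c_{i_bi_a}=c_{i_ai_b}$, for a cleverly chosen quintuple of indices.

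First I would argue $b\in\ker\Z$. For any character $\chi\in X(\dc K)$, choose a representation $\rho$ realizing it and observe $\Z(b)(\chi) = \det\left[\tr(\rho(\wt\gamma_{i_ai_b}))\right]$; using Proposition \ref{prop-surjectiononDCgroup} (extended over all pairs via the push-off-to-$\wt p_1$ argument, or by first embedding $b$ in a braid diagram where the relevant cords are the $\gamma_{ij}$) this determinant has the shape $\det\left[\tr(A_{i_a}A_{i_b}^{-1})\right]$ for suitable $A_i\in\SL_2\bb C$ — exactly a $5\times5$ trace matrix of the type treated in Corollary \ref{cor-tracematrixisnotrank5}, hence zero. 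Since regular functions on $X(\dc K)$ separate points, $\Z(b)=0$.

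Second, and this is where the real content lies, I must choose the five indices so that $b\notin\sqrt0$, i.e.\ so that $b$ does not vanish on all of $A(D)$ — equivalently, so that $\ve(b)\ne 0$ for some reflective augmentation $\ve$. Here I use the rank hypothesis: since $\text{rank}(C_D(\ve))\ge 5$, Lemma \ref{lem-principalminors} (the matrix is symmetric hence diagonalizable) gives a \emph{principal} $5\times5$ submatrix of $C_D(\ve)$ with nonzero determinant. Taking $\{i_1,\dots,i_5\}$ to be the row/column indices of that submatrix, we get $\ve(b)\ne0$, so $b\notin\sqrt0$. The only subtlety to check is that the chosen indices can be taken within the range where the cord-identities of Proposition \ref{prop-surjectiononDCgroup} apply cleanly; if the maximal nondegenerate principal submatrix does not lie among the first $N$ rows one can either invoke Proposition \ref{prop-generatebyfewer} to relocate it, or simply note that the vanishing of $5\times5$ trace matrices in Corollary \ref{cor-tracematrixisnotrank5} holds for \emph{any} five of the $X_i$'s, so the argument of the first step is insensitive to the choice. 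Combining the two steps, $b\in\ker\Z\setminus\sqrt0$, so $\Z$ has nontrivial kernel after passing to $(\hc(K)\otimes\bb C)/\sqrt0$, proving it is not an isomorphism.

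I expect the main obstacle to be the bookkeeping in the first step: verifying that $\Z(b)$ really does evaluate, on every character of $\dc K$, to a determinant of the precise form $\det[\tr(A_iA_j^{-1})]$ covered by Corollary \ref{cor-tracematrixisnotrank5}. This requires being careful that the two lifts of each cord $c_{i_ai_b}$ assemble to $g_{i_a}g_{i_b}^{-1}$ (up to conjugacy and inversion, which does not affect the trace) uniformly across the quintuple — the content already packaged in Propositions \ref{prop-surjectiononDCgroup} and \ref{prop-indextwo-to-dcK}, but needing the mild extension to all pairs $(i_a,i_b)$ rather than just pairs involving the basepoint index.
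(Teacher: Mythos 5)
Your proposal is correct and takes essentially the same route as the paper: both choose $b$ to be the determinant of a nondegenerate principal $5\times5$ submatrix of $C_D$ supplied by Lemma \ref{lem-principalminors} applied to the rank $\ge 5$ augmentation, note $\ve(b)\ne 0$ so $b\notin\sqrt0$, and conclude $\Z(b)=0$ from Corollary \ref{cor-tracematrixisnotrank5}. The bookkeeping you flag (identifying $\Z(c_{i_ai_b})$ with trace functions $t_{g_{i_a}g_{i_b}^{-1}}$ for arbitrary pairs) is handled in the paper exactly as you suggest, via Propositions \ref{prop-generatebyfewer} and \ref{prop-surjectiononDCgroup}.
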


\begin{proof}That the dual map $\Z^\tau$ cannot be an equivalence was already observed in the proof of Theorem \ref{thm-varietycorrespondence} {--} the observation there that $\Z^\tau(\chi)$ could not have rank more than four did not depend on $\chi$ being $\tau$-equivariant. In order to see an element of the kernel of $\Z$, choose an augmentation $\ve$ with rank five, and a size 5 non-degenerate principal submatrix of $C_D(\ve)$. The determinant of the corresponding submatrix of $C_D$ (as a matrix over $\hc(K)$) cannot be in the nilradical of $\hc(K)$ because of $\ve$, but it is in the kernel of $\Z$ by Corollary \ref{cor-tracematrixisnotrank5}.
\end{proof}

We might then expect that, given the existence of higher rank augmentations, the rings $(\hc(K)\otimes\bb C)/\sqrt0$ and $\bb C[X(\dc K)]$ are not isomorphic.

\section{Applications, finding representations into $\SU(2)$ and $\SL_2\bb R$}
\label{sec-unitary}

In this section we explicitly give a non-metabelian trace-free representation $\wt\rho\co \pi_1(E_K)\to\SL_2\bb C$ corresponding to $\ve\in A(D)$, with rank three. The obvious necessary conditions on $\ve$ for $\wt\rho$ to be conjugate into $\SU(2)$ are shown to be sufficient, and that conjugate representation is non-binary-dihedral. The representation $\wt\rho$ is lifted from a non-abelian representation $\rho\co \pi_1(\dc K)\to \SL_2\bb C$ with character $\Y(\chi_{\wt\rho})$. We discuss when $\rho$ is conjugate into $\SL_2\bb R$and give some applications to the left-orderability of $\pi_1(\dc K)$.

\subsection{Constructing representations}
\label{sec-makingReps}

Suppose that $D$ is $D(w(\beta))$ for some word for a braid representative $\beta\in B_N$ of $K$ (we will note shortly that, in fact, $D$ may be any knot diagram of $K$). Let $\ve = (\ve_{12},\ve_{13},\ldots,\ve_{n-1,n})\in A(D)$ have rank three. By Lemma \ref{lem-principalminors} there is a size 3 principal submatrix of $C_D(\ve)$ that is non-degenerate. We may appropriately label $D$ so that it consists of rows (and columns) $1,2,3$; note this implies
		\[\ve_{12}^2+\ve_{13}^2+\ve_{23}^2 - \ve_{12}\ve_{13}\ve_{23}-2 \ne 2.\] 
We may also assume $\ve_{12}\ne\pm2$ (Lemma \ref{lem-rank1aug}). Set $A_1=\text{Id}$ and construct $A_1,\ldots, A_n$ as in Proposition \ref{prop-augtotrace}. We use the notation of that proposition (taking $k=3$), so that
	\[A_2 = \begin{pmatrix}d & 0\\ 0 & \ve_{12} - d\end{pmatrix},\ A_3 =\begin{pmatrix}a & 1 \\ a(\ve_{13} - a) -1 & \ve_{13} - a\end{pmatrix},\ \text{and }A_l = \begin{pmatrix}a_l & b_l \\ c_l & \ve_{1l} - a_l\end{pmatrix}\] for every $l\ne 1,2,3$. By that proposition we have $A_1,\ldots,A_n$ defined such that $\tr(A_rA_s^{-1})=\ve_{rs}$ if $\min(r,s) \le 3$.
We are left to check that $\tr(A_rA_{s}^{-1}) = \ve_{rs}$ if $r$ and $s$ are larger than 3. To this end consider the $4\times 4$ matrix 
		\[M(r,s) = \begin{pmatrix} 2 & \tr(A_2^{-1}) & \tr(A_3^{-1}) & \tr(A_{s}^{-1}) \\ \tr(A_2) & 2 & \tr(A_2A_3^{-1}) & \tr(A_2A_{s}^{-1}) \\ \tr(A_3) & \tr(A_3A_2^{-1}) & 2 & \tr(A_3A_{s}^{-1}) \\ \tr(A_r) & \tr(A_rA_2^{-1}) & \tr(A_rA_3^{-1}) & \tr(A_rA_s^{-1}) \end{pmatrix}.\]
For any $r$ it must be that $\det(M(r,r)) = 0$ since each entry of $M(r,r)$ equals the corresponding augmentation coordinate $\ve_{\ast,\ast}$ and the rank of $\ve$ is less than four. By Corollary \ref{cor-switchcolumn} it must be that $\det(M(r,s)) = 0$ for $r, s\ne 1,2,3$. As the rank of $\ve$ is three, our choice of labeling implies the fourth column of $M(r,s)$ is in the linear span of the (independent) first three columns. Since $\tr(A_lA_s^{-1})=\ve_{ls}$ for $l=1,2,3$ this also implies $\tr(A_rA_s^{-1}) = \ve_{rs}$.

Let $m_1,\ldots,m_n$ be the meridian generators of $\pi_1(E_K)$ associated to $D$. Take $g_i=[m_1m_i^{-1}]$ for $i=2,\ldots,n$ (so $g_2,\ldots,g_N$ are as in Proposition \ref{prop-indextwo-to-dcK}) and define $\rho(g_i)=A_i$ for each $i$, defining a map on $\pi_1(\dc K)$. Since $\Y\circ\X(\ve)\in X(\dc K)$ is a well-defined character, there is some representation $\rho'$ of $\pi_1(\dc K)$ in $\SL_2\bb C$ satisfying the above trace conditions. But up to conjugacy the trace conditions determined $\rho$, i.e.\ there exists $B\in\SL_2\bb C$ so that $B\rho'(g_i)B^{-1} = A_i$, $i=2,\ldots,n$, so $\rho$ must be well-defined. 

To construct its lift $\wt\rho$, a trace-free representation $R(E_K)$, construct the matrix $T$ guaranteed by Lemma \ref{lem-MagnusTraceless} (that $M(r,r)=0$ implies $\D(A_2,A_3,A_r)=0$). Define $\wt\rho(m_i) = TA_i$ for each meridian generator $m_i,\ r=1,2,\ldots,n$. Then $\wt\rho\co \pi_1(E_K)\to\SL_2\bb C$ has $\X(\ve)$ as its character and is a well-defined, trace-free representation that is non-metabelian as $\ve$ had rank three (see comments following Theorem \ref{thm-NYInvolution}).

Define $\alpha := 1-a(\ve_{13}-a)$. We will need the following lemma in the remaining sections.

\begin{lem}If $\ve\in A(D)$ is a real point, i.e.\ $\ve_{rs}\in\bb R$ for $1\le r<s \le n$, then $\alpha\in\bb R$. In that case $\alpha$ is positive if and only if $\ve_{12}^2 - 4$ and $\ve_{12}^2+\ve_{13}^2+\ve_{23}^2-\ve_{12}\ve_{13}\ve_{23} - 4$ have the same sign.
\label{lem-alpha}
\end{lem}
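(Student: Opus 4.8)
The plan is to compute $\alpha = 1 - a(\ve_{13}-a)$ explicitly and recognize it as a ratio of two quantities that are manifestly real. Recall from Proposition~\ref{prop-augtotrace} (applied with $k=3$) that $d$ satisfies $d^2 - \ve_{12}d + 1 = 0$ and $a = (\ve_{23} - \ve_{13}d)/(\ve_{12} - 2d)$, with $\ve_{12}\ne\pm2$ ensuring the denominator is nonzero. It is worth flagging at the outset that the statement is not a triviality: when $|\ve_{12}|<2$ the root $d$ — and hence $a$ — is genuinely non-real, so the reality of $\alpha$ must come from cancellation after the quadratic relation on $d$ is invoked.

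First I would put $\alpha$ over the common denominator $(\ve_{12}-2d)^2$, obtaining
\[
\alpha \;=\; \frac{(\ve_{12}-2d)^2 - (\ve_{23}-\ve_{13}d)\bigl(\ve_{13}(\ve_{12}-2d) - (\ve_{23}-\ve_{13}d)\bigr)}{(\ve_{12}-2d)^2}.
\]
Expanding the numerator and sorting by powers of $d$, the $d^2$- and $d$-terms assemble into $(\ve_{13}^2-4)(\ve_{12}d - d^2)$ while the $d$-free part is $\ve_{12}^2 + \ve_{23}^2 - \ve_{12}\ve_{13}\ve_{23}$; similarly the denominator expands as $\ve_{12}^2 - 4(\ve_{12}d - d^2)$. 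Now substituting $\ve_{12}d - d^2 = 1$ (the quadratic relation on $d$), all dependence on $d$ disappears and one is left with
\[
\alpha \;=\; \frac{\ve_{12}^2 + \ve_{13}^2 + \ve_{23}^2 - \ve_{12}\ve_{13}\ve_{23} - 4}{\ve_{12}^2 - 4}.
\]

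Both numerator and denominator are polynomials in the $\ve_{rs}$, hence real when $\ve$ is a real point, and the denominator is nonzero since $\ve_{12}\ne\pm2$; this gives $\alpha\in\bb R$. Furthermore $\alpha>0$ exactly when the numerator and denominator share a sign, which is precisely the assertion that $\ve_{12}^2-4$ and $\ve_{12}^2+\ve_{13}^2+\ve_{23}^2-\ve_{12}\ve_{13}\ve_{23}-4$ have the same sign. (Alternatively, the displayed formula for $\alpha$ can be read off from the chain of identities already recorded in the proof of Proposition~\ref{prop-augtotrace}, where the locus $a(\ve_{13}-a)-1=0$ is analyzed; the computation here is the same one, carried out ungraded.) There is no real obstacle in this argument — only the bookkeeping of the $d$-linear terms in the expansion before applying $d^2=\ve_{12}d-1$.
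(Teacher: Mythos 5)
Your proof is correct and is essentially the same computation the paper gives: both reduce $\alpha = 1 - a(\ve_{13}-a)$ via the quadratic relation on $d$ (equivalently $(\ve_{12}-2d)^2=\ve_{12}^2-4$) to the closed form $\alpha = (\ve_{12}^2+\ve_{13}^2+\ve_{23}^2-\ve_{12}\ve_{13}\ve_{23}-4)/(\ve_{12}^2-4)$, from which reality and the sign criterion are immediate. Your write-up just makes explicit the cancellation of the $d$-dependent terms that the paper leaves as "a direct consequence of a computation."
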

\begin{proof}Using the definition of $a$, the conclusion is a direct consequence of a computation:
	\al{
	\alpha = a^2 - a\ve_{13} + 1	&= \frac{(\ve_{23}-\ve_{13}d)^2 - \ve_{13}(\ve_{23} - \ve_{13}d)(\ve_{12} - 2d)}{\ve_{12}^2 - 4} + 1 \\
									&= \frac{\ve_{12}^2 + \ve_{13}^2+\ve_{23}^2 - \ve_{13}\ve_{23}\ve_{12}-4}{\ve_{12}^2 - 4}.
	}
\end{proof}

\begin{rem}
We have been using a closed braid form and the diagram $D(w(\beta))$ because of our choice of presentation of $\pi_1(\dc K)$. However, if $D$ is any knot diagram, and $m_1,\ldots,m_n$ the set of meridian generators in the Wirtinger presentation, it is possible to directly check that setting $m_1\mapsto T$ and $m_i\mapsto TA_i$, for $i=2,\ldots,n$ determines a well-defined representation of $\pi_1(E_K)$. 

Suppose that there is a crossing $(i,j,k)$ in $D$ (recall the overcrossing strand is $i$). Then the corresponding relation in the Wirtinger presentation is $m_jm_i^{\epsilon} = m_i^{\epsilon}m_k$, where $\epsilon$ is either 1 or -1. Moreover, $\ve_{rj}+\ve_{rk} = \ve_{ri}\ve_{ij}$ for any $1\le r\le n$. One can check this implies that $TA_j+TA_k = \ve_{ij}TA_i$. So if $\ve_{ij}\ne 0$ then, since $(TA_r)^2=-\text{Id}$ for $r= j,k$,
				\[(TA_j)(TA_i) = \frac1{\ve_{ij}}((TA_j)(TA_k) - \text{Id}) = (TA_i)(TA_k),\]
so the relation holds. The relation also holds if $\ve_{ij}=0$, but takes some more effort to check. In particular, it requires one to use the fact that $c_l+b_l\alpha = 0$ (see the proof of Theorem \ref{thm-unitary2}). 
\end{rem}

\subsection{$\SU(2)$ representations}
\label{sec-SU2}
Before discussing the relation to $\SU(2)$ characters in $X_{TF}(E_K)$, we point out a relationship to a space arising in singular instanton knot homology $I^\nl(K)$ \cite{KMunknotdetector}. The conjugacy classes of $\SU(2)$ representations in $X_{TF}(E_K)$ is a space arising in the construction of $I^\nl(K)$.

Thinking of $\SU(2)$ as the unit quaternions, define $R(K;\bi) = \{\rho\co \pi_1(E_K)\to\SU(2) | \rho(m)\sim\bi \}$, where $m$ is a meridian and $\rho(m)\sim\bi$ means that $\rho(m)$ is conjugate to $\bi$. Identifying $\SU(2)$ with a real subalgebra of $M_2(\bb C)$, setting $\bi$ to the diagonal matrix $\text{diag}[\sqrt{-1},-\sqrt{-1}]$, we note that $R(K;\bi)$ is in the trace-free representations of $\pi_1(E_K)$. The binary-dihedral subgroup is $S^1\cup S^1{\bf j}\subset \SU(2)$, where $S^1 = \{a+b\bi | a^2+b^2 = 1\}$. The set of representations with binary-dihedral image are all in $R(K,\bi)$ and their characters are the metabelian characters. 
\begin{defn}A knot $K$ is \textit{$\SU(2)$-simple} if $R(K;\bi)$ contains only binary-dihedral representations.
\end{defn}

By results below, the rank three reflective augmentations determine whether a knot is $\SU(2)$-simple. If a certain genericity hypothesis holds for $K$ and $K$ is $\SU(2)$-simple then the singular instanton chain complex, under an admissible holonomy perturbation, has no non-zero differentials and the total rank of $I^\nl(K)$ is $\det(K)$ \cite[Proposition 7.3]{Zentner} (see also \cite{hed-her-kirk}).

Returning to constructions on augmentations, let $\wt\rho\co \pi_1(E_K)\to\SL_2\bb C$ be the representation constructed in Section \ref{sec-makingReps}. For a pair $m_r,m_s$ of meridians we have $\ve_{rs} = \tr(\wt\rho(m_rm_s^{-1}))$. Thus if $\wt\rho$ is conjugate into $\SU(2)$ then $\ve_{rs}\in[-2,2]\subset\bb R$. Since the commutator $[\wt\rho(m_sm_r^{-1}), \wt\rho(m_rm_t^{-1})]$ has trace $\ve(r,s,t) := \ve_{rs}^2 + \ve_{rt}^2 + \ve_{st}^2 - \ve_{rs}\ve_{rt}\ve_{st} - 2$, we have $\ve(r,s,t)\in[-2,2]$ also. 

\begin{defn}Let $\ve\in A(D)$ have rank three for $D$ an $n$-crossing diagram. We call $\ve$ \textit{elliptic} if it satisfies the conditions
			\al{
			\ve_{rs} \in [-2,2],\quad 				& 1\le r < s \le n \\
			\ve(r,s,t) \in [-2,2],\quad 			& 1\le r < s < t \le n.
			}
\label{defn-elliptic}
\end{defn}
Note that if $\ve$ is elliptic then there is some $\ve_{rs}\in(-2,2)$, since $\text{rank}(\ve)=3$. So the images of $\rho$ and $\wt\rho$ contain some elliptic element in $\SL_2\bb C$, and this elliptic element is involved in a commutator that is also elliptic.

\begin{thm}Given a diagram $D$ and $\ve\in A(D)$, $\ve$ is elliptic if and only if $\X(\ve)$ is an $\SU(2)$ character.
\label{thm-unitary2}
\end{thm}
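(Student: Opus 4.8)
The plan is to recast both conditions as linear algebra about the symmetric matrix $C_D(\ve)=[\ve_{rs}]_{1\le r,s\le n}$ (with $\ve_{rr}=2$), using the standard dictionary between trace-free elements of $\SL_2\bb C$ and vectors of $\mathfrak{sl}_2\bb C$: a trace-free $X\in\SL_2\bb C$ has $X^2=-\text{Id}$; for trace-free $X,Y$ one has $\tr(XY^{-1})=-\tr(XY)=-2B(X,Y)$, where $B=\tfrac12\tr(\cdot\,\cdot)$ is the nondegenerate $\SL_2\bb C$-conjugation-invariant bilinear form on $\mathfrak{sl}_2\bb C$; and $X$ can be conjugated into $\SU(2)$ exactly when the corresponding vector lies in the real form $\mathfrak{su}(2)\cong\bb R^3$, on which $-B$ restricts to the Euclidean inner product. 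I would apply this to $\wt\rho\co\pi_1(E_K)\to\SL_2\bb C$, the trace-free representation with character $\X(\ve)$ built in Section~\ref{sec-makingReps}, which satisfies $\tr(\wt\rho(m_rm_s^{-1}))=\ve_{rs}$: the vectors $w_i$ attached to $\wt\rho(m_i)$ then have $B$-Gram matrix $-\tfrac12 C_D(\ve)$, and since $\mathrm{rank}(\ve)=3$ this has rank $3$, so the $w_i$ span $\mathfrak{sl}_2\bb C$.

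For the forward implication, assuming $\X(\ve)$ is an $\SU(2)$ character, I would note that $\X(\ve)$ is non-metabelian (Corollary~\ref{thm-IrrMetabelian}), hence irreducible, so the $\SU(2)$ representation with that character is conjugate to $\wt\rho$; we may thus take $\wt\rho(\pi_1(E_K))\subset\SU(2)$. Then each $\ve_{rs}=\tr(\wt\rho(m_rm_s^{-1}))$ is the trace of an $\SU(2)$-element and each $\ve(r,s,t)=\tr\bigl([\wt\rho(m_sm_r^{-1}),\wt\rho(m_rm_t^{-1})]\bigr)$ is the trace of a commutator of $\SU(2)$-elements, so all lie in $[-2,2]$ and $\ve$ is elliptic --- this is essentially the computation recorded before Definition~\ref{defn-elliptic}.

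For the converse, assuming $\ve$ elliptic, the key step is to show $C_D(\ve)$ is positive semidefinite. It is real and symmetric (as $\ve$ is reflective and elliptic), has rank $3$, and --- a real symmetric matrix being positive semidefinite iff all its principal minors are $\ge0$ --- I would check: the size-$1$ minors equal $2$; the size-$2$ minors equal $4-\ve_{rs}^2\ge0$ by ellipticity; the size-$3$ minor on rows $r,s,t$ equals $2\bigl(2-\ve(r,s,t)\bigr)\ge0$ by the determinant computation in the proof of Proposition~\ref{prop-augtotrace} together with ellipticity; and all minors of size $\ge4$ vanish since the rank is $3$. Hence $G:=\tfrac12 C_D(\ve)$ is real symmetric positive semidefinite of rank $3$ with unit diagonal, so there exist unit vectors $u_1,\dots,u_n\in\bb R^3$ spanning $\bb R^3$ with $\langle u_i,u_j\rangle=\tfrac12\ve_{ij}$. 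Viewing each $u_i$ as $U_i\in\mathfrak{su}(2)\subset\SU(2)$ gives trace-free $U_i\in\SU(2)$ with $\tr(U_iU_j^{-1})=2\langle u_i,u_j\rangle=\ve_{ij}$, so the $U_i$ have the same $B$-Gram matrix $-G$ as the $w_i$. Since the $w_i$ span and $B$ is nondegenerate, the common Gram matrix forces a unique $g\in\mathrm{GL}(\mathfrak{sl}_2\bb C)$ with $g(w_i)=U_i$ for all $i$; it preserves $B$, so $g\in\mathrm{O}(3,\bb C)$, and if $\det g=-1$ I would replace $g$ by $-g$ and each $U_i$ by $-U_i=U_i^{-1}\in\SU(2)$ (which leaves the Gram matrix unchanged), so that $g\in\mathrm{SO}(3,\bb C)\cong\mathrm{PSL}_2\bb C$. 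Lifting $g$ to $B\in\SL_2\bb C$ then gives $B\wt\rho(m_i)B^{-1}=U_i\in\SU(2)$ for all $i$, whence $B\wt\rho B^{-1}$ maps into $\SU(2)$ and $\X(\ve)=\chi_{\wt\rho}$ is an $\SU(2)$ character.

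The hard part is the converse, and within it the two places needing care: identifying the $3\times3$ principal minors of $C_D(\ve)$ with $2-\ve(r,s,t)$ so that, with the rank hypothesis killing larger minors, ellipticity becomes \emph{exactly} positive semidefiniteness of $C_D(\ve)$; and the passage from a real positive semidefinite Gram matrix to an honest conjugation into $\SU(2)$ --- there one must use the rank-$3$ (spanning) condition to see that the common Gram matrix forces the isometry $g$, and must dispose of the $\mathrm{O}(3,\bb C)$-versus-$\mathrm{SO}(3,\bb C)$ orientation ambiguity via the harmless substitution $U_i\mapsto U_i^{-1}$.
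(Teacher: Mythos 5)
Your argument is correct, and it reaches the conclusion by a genuinely different route from the paper. The paper's proof is a coordinate computation: it takes the explicit matrices $T,A_2,\dots,A_n$ from Section \ref{sec-makingReps}, builds a conjugator $P$ out of the eigenvectors of $T$, and verifies entry by entry that each $P^{-1}TA_lP$ has the unitary form $\begin{pmatrix}\alpha&\beta\\-\bar\beta&\bar\alpha\end{pmatrix}$; the two pressure points there are the sign of $\alpha$ (Lemma \ref{lem-alpha}) and the identity (\ref{eqn-magic}), where the rank-three hypothesis enters as $\det_l(\ve)=0$ to force $c_l+b_l\alpha=0$. You instead package everything into the single statement that, for a rank-three real point, ellipticity is exactly positive semidefiniteness of $C_D(\ve)$ (sizes $1,2,3$ of the principal minors handled by the trace inequalities, sizes $\ge4$ killed by the rank), realize $\tfrac12C_D(\ve)$ as the Gram matrix of unit vectors in $\mathbb R^3\cong\mathfrak{su}(2)$, and transport $\wt\rho(m_i)$ onto those vectors by a $B$-isometry, lifted through $\mathrm{PSL}_2\bb C\cong\mathrm{SO}(3,\bb C)$ after the harmless $U_i\mapsto U_i^{-1}$ orientation fix. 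The spanning/nondegeneracy argument that pins down the isometry is sound (a rank-$3$ Gram matrix forces the $w_i$ to span $\mathfrak{sl}_2\bb C$, and the invertible $3\times3$ principal block exists by Lemma \ref{lem-principalminors}). What the paper's computation buys that yours does not: the explicit formulas and (\ref{eqn-magic}) are reused verbatim in Theorem \ref{thm-SL2R}, and the proof shows (Remark \ref{rem-Realpoints}) that a much weaker hypothesis --- realness plus a single $\ve_{ij}\in(-2,2)$ and a single $\ve(i,j,k)<2$ --- already forces full ellipticity, which your PSD argument consumes as input rather than producing as output. What yours buys: it explains conceptually \emph{why} the bounds of Definition \ref{defn-elliptic} are the right ones, and the same Gram-matrix framework with signature $(2,1)$ in place of $(3,0)$ would give the $\SL_2\bb R$ statement with no new computation. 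One cosmetic caution: you use $B$ both for the trace form and for the lifted conjugating matrix; and, as in the paper, the "only if" direction tacitly assumes $\mathrm{rank}(\ve)=3$ so that Corollary \ref{thm-IrrMetabelian} applies --- this is forced by Definition \ref{defn-elliptic} anyway.
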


\begin{proof}
If $\X(\ve)$ is an $\SU(2)$ character then $\ve$ is elliptic. Suppose that $\ve$ has rank three and is elliptic. 

Let $\bi = \sqrt{-1}$ and construct $T$ and $A_2,\ldots,A_n$ in $\SL_2\bb C$ as at the start of the section. As $\ve$ has rank three, we may suppose $\ve_{12} \in (-2,2)$ and $\ve(1,2,3) < 2$. Use the notation introduced in Proposition \ref{prop-augtotrace}. Recall $a = \frac{\ve_{23} - \ve_{13}d}{\ve_{12}-2d}$ and $d^2-\ve_{12}d + 1 = 0$.

Choose a square root of $\alpha = 1-a(\ve_{13}-a)$. By Lemma \ref{lem-alpha} $\alpha\ne0$ and by the explicit method of constructing $T$ (Remark \ref{rem-Tappliesmore}) we may set

			\[T = \begin{pmatrix} 0 & \bi\alpha^{-1/2} \\ \bi\alpha^{1/2} & 0\end{pmatrix},\]
which has eigenvectors $(\alpha^{-1/2},1)$ and $(\alpha^{-1/2},-1)$. As $\ve_{12} \in (-2,2)$ we get $d^2-\ve_{12}d+1 = 0$ implies that $\ve_{12} - d = \bar d$, the complex conjugate of $d$. Letting $P$ be the matrix with the above eigenvectors as columns, we have 
			\[P^{-1}TA_2P = \begin{pmatrix}\frac{\bi}2(d+\bar d) & \frac{\bi}2(d-\bar d) \\ -\frac{\bi}2(d-\bar d) & -\frac{\bi}2(d + \bar d)\end{pmatrix}\]
which is in $\SU(2)$. Also we have that
			\[P^{-1}TA_3P = \begin{pmatrix}\frac {\bi}2\ve_{13} & -\frac {\bi}2(2\alpha^{1/2} + (\ve_{13} - 2a)) \\ -\frac {\bi}2(2\alpha^{1/2} - (\ve_{13} - 2a)) & -\frac {\bi}2\ve_{13}\end{pmatrix}.\]
As $\ve(1,2,3) < 2$ we know $\alpha^{1/2}$ is real by Lemma \ref{lem-alpha}. Moreover, since $\ve_{12}-d = \bar d$ one can show that the real part of $2a$ is $\ve_{13}$. Hence $\ve_{13} - 2a$ is purely imaginary. From these observations we see that $P^{-1}TA_3P$ has the form $\begin{pmatrix}\alpha & \beta \\ -\bar\beta & \bar\alpha\end{pmatrix}$.

Finally consider $A_l$ for any $l\ne 1,2,3$. Recall that $c_l = a(\ve_{1l} - a_l) + b_l - b_la(\ve_{13}-a) + (\ve_{13} - a)a_l - \ve_{3l}$, where $a_l = \frac{\ve_{2l} - \ve_{1l}d}{\ve_{12} - 2d}$ and $b_l$ is defined such that the determinant is 1.

Straightforward computation gives that
			\[P^{-1}TA_lP = \frac{\alpha^{-1/2}}2\begin{pmatrix}\bi\ve_{1l}\alpha^{1/2} + \bi(c_l + b_l\alpha) & \bi(2a_l - \ve_{1l})\alpha^{1/2} + \bi(c_l - b_l\alpha) \\ \bi(\ve_{1l} - 2a_l)\alpha^{1/2} + \bi(c_l - b_l\alpha) & -\bi\ve_{1l}\alpha^{1/2} + \bi(c_l + b_l\alpha)\end{pmatrix}\]
Now note that $2a_l - \ve_{1l} \in \bi\bb R$. Moreover,
		\al{
			c_l - b_l\alpha 	&= a(\ve_{1l} - a_l) + a_l(\ve_{13} - a) - \ve_{3l} \\
						&= \frac2{4-\ve_{12}^2}(\ve_{23}(\ve_{2l} - \frac{\ve_{12}}2\ve_{1l}) + \ve_{13}(\ve_{1l} - \frac{\ve_{12}}2\ve_{2l})) - \ve_{3l},
		}
and so $c_l-b_l\alpha\in\bb R$ and thus the off-diagonal entries in $P^{-1}TA_lP$ make a pair $\beta$ and $-\bar\beta$. Now we note that $c_l + b_l\alpha$ must be zero, as $P^{-1}TA_lP$ has trace zero. So $P^{-1}TA_lP$ is in $\SU(2)$. 

To see $c_l + b_l\alpha=0$ more directly, since $P^{-1}TA_lP$ has determinant 1 we have
		\[1 + \frac14\left((2a_l - \ve_{1l})^2 - \alpha^{-1}(c_l-b_l\alpha)^2\right) = \frac14\left(\ve_{1l}^2-\alpha^{-1}(c_l+b_l\alpha)^2\right),\]
and rearranging gives $(c_l+b_l\alpha)^2 = \alpha(\ve_{1l}^2-4) - \alpha(2a_l - \ve_{1l})^2 + (c_l-b_l\alpha)^2$. Now consider the determinant, which we will write $\det_{l}(\ve)$ of the $4\times 4$ matrix $\left[\ve_{rs}\right]_{r,s=1,2,3,l}$. Direct comparison finds that
	\begin{equation}
	(c_l - b_l\alpha)^2 + \alpha(\ve_{1l}^2 - 4) - \alpha(2a_l - \ve_{1l})^2 = \alpha\frac{\det_{l}(\ve)}{\ve_{12}^2+\ve_{13}^2+\ve_{23}^2 - \ve_{12}\ve_{13}\ve_{23} - 4}.
	\label{eqn-magic}
	\end{equation}
As $\ve$ has rank three, $\det_{l}(\ve) = 0$, and so $c_l + b_l\alpha = 0$.
\end{proof}

%Considering $\SU(2)$ as the unit quaternions under the conventional identification, the image of the $\SU(2)$ representation is contained in a sphere $\Sigma_{\pi/2} = \text{Re}^{-1}(0)$ where $\text{Re}$ is defined by $\text{Re}(a + b\bi + c{\bf j} + d{\bf k}) = a$.

\begin{rem}Note in the proof of Theorem \ref{thm-unitary2}, we only needed to use that $\ve$ was a real point in $A(D)$ and that for \textit{some} $1\le i<j<k\le n$ (relabelled to be $i=1,j=2,k=3$) we had $\ve_{ij} \in (-2,2)$ and $\ve(i,j,k) < 2$. Yet the result implies that $\ve$ is elliptic, placing a strong condition on the real points in $\bb C^{\binom n2}$ which can be occupied by an augmentation.
\label{rem-Realpoints}
\end{rem}

\begin{cor}A knot $K$ is $\SU(2)$-simple if and only if $K$ has no elliptic augmentation.
\label{cor-su2simple}
\end{cor}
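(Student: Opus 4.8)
The plan is to read off the corollary directly from Theorem~\ref{thm-unitary2}, Corollary~\ref{thm-IrrMetabelian}, and the standard identification of binary-dihedral $\SU(2)$ representations with (ir)reducible metabelian characters. First I would unwind the definition: $K$ is $\SU(2)$-simple exactly when $R(K;\bi)$ contains no representation whose character is non-metabelian, since a representation in $R(K;\bi)$ is binary-dihedral iff its character is metabelian. Indeed, a reducible $\SU(2)$ representation is completely reducible, hence abelian with image in the circle $S^1\subset S^1\cup S^1{\bf j}$; and an irreducible $\SU(2)$ representation with metabelian character has, by Proposition~\ref{prop-NYfixedpts}, an irreducible metabelian character, which coincides with a binary-dihedral $\SU(2)$ character, so by \cite{CS} the representation is conjugate to the unique (binary-dihedral) representation carrying that character.

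For the forward direction, suppose $K$ has no elliptic augmentation and take an arbitrary $\rho\in R(K;\bi)$. Since $\bi$ is traceless, $\chi_\rho\in X_{TF}(E_K)$. If $\chi_\rho$ is metabelian, the previous paragraph shows $\rho$ is binary-dihedral. If instead $\chi_\rho$ were non-metabelian, then by Corollary~\ref{thm-IrrMetabelian} there would be a rank three augmentation $\ve$ with $\X(\ve)=\chi_\rho$; but $\chi_\rho$ is an $\SU(2)$ character, so Theorem~\ref{thm-unitary2} would force $\ve$ to be elliptic, a contradiction. Hence every element of $R(K;\bi)$ is binary-dihedral, i.e.\ $K$ is $\SU(2)$-simple.

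For the converse I would argue by contrapositive. Assume $\ve$ is an elliptic augmentation (so of rank three by Definition~\ref{defn-elliptic}). Theorem~\ref{thm-unitary2} gives that $\X(\ve)$ is an $\SU(2)$ character, and Corollary~\ref{thm-IrrMetabelian} gives that $\X(\ve)$ is non-metabelian. Picking any $\rho\colon\pi_1(E_K)\to\SU(2)$ with $\chi_\rho=\X(\ve)$, the fact that $\chi_\rho$ is trace-free makes $\rho(m)$ a traceless element of $\SU(2)$, hence conjugate to $\bi$, so $\rho\in R(K;\bi)$; and $\rho$ is not binary-dihedral because its character is non-metabelian. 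Therefore $K$ is not $\SU(2)$-simple. The only point requiring any care is the bookkeeping in the first paragraph matching ``binary-dihedral'' with metabelianity of the character (for both reducible and irreducible $\SU(2)$ representations); beyond that the argument is a formal combination of Theorem~\ref{thm-unitary2} and Corollary~\ref{thm-IrrMetabelian}, with no computation involved.
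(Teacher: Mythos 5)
Your argument is correct and follows essentially the same route as the paper: both directions are a formal combination of Theorem~\ref{thm-unitary2} with Corollary~\ref{thm-IrrMetabelian}, plus the identification of binary-dihedral representations with metabelian trace-free characters. Your extra bookkeeping in the first paragraph (handling reducible versus irreducible $\SU(2)$ representations) is a reasonable elaboration of what the paper leaves implicit, but does not change the substance.
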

\begin{proof}
If $K$ has a elliptic augmentation $\ve$ then by Theorem \ref{thm-unitary2} $\X(\ve)$ is the character of a $\SU(2)$ representation $\rho\co\pi_1(E_K)\to\SU(2)$. That representation is trace-free, so $\rho\in R(K;\bi)$ and since $\ve$ has rank three, $\rho$ is not binary-dihedral by Corollary \ref{thm-IrrMetabelian}. So $K$ is not $\SU(2)$-simple.

If $K$ is not $\SU(2)$-simple then there is an irreducible $\SU(2)$ representation in $X_{TF}(E_K)$ that does not have a metabelian character. If the character is $\chi$, then $\Z^\tau\circ\Y(\chi)\in A^3(D)$ has rank three and is elliptic since $\chi$ is an $\SU(2)$ character. 
\end{proof}

In the appendix we record the number of elliptic augmentations for knots with 8, 9, and 10 crossings which are not 2-bridge knots (2-bridge knots have no rank three augmentations). It should be noted that each non-alternating knot in these tables has at least one elliptic augmentation. The author has also looked at the reflective augmentations of all non-alternating 11-crossing knots, each of which has at least one elliptic augmentation. This gives some computational evidence for an affirmative answer to Question 8.2 in \cite{Zentner}. For the reader's convenience we note that from Tables \ref{table-8crossing}, \ref{table-10crossingnon}, and \ref{table-10crossingalt} the following is the set of knots with up to 10 crossings which are $\SU(2)$-simple, but not 2-bridge:
		\[8_{16},8_{17},8_{18},9_{29},9_{32},9_{33},9_{38},10_{83},10_{86},10_{88},10_{89},10_{91},10_{92},10_{94},10_{95},10_{96},10_{97},10_{109}.\]

While the author is uncertain how generally it holds, the phenomenon illustrated by the Example \ref{example-2} below may be of interest to the question of whether any non-alternating knot is $\SU(2)$-simple.

\begin{example}
%% We now describe a construction that often allows one to build a new (prime) knot, and a real rank three augmentation for that knot, from a given knot and a rank two reflective augmentation on tbat knot. The basic idea is, begin with a diagram $D$, with a labeling that increases (from component $i$ to $i+1$) at a crossing where these components meet. Let $\ve\in A(D)$ have rank two and, if $D$ has $n$ crossings, suppose that the first and $n^{th}$ rows of $C_D(\ve)$ are independent. Then we take a Conway sphere that intersects $D$ at components 1 and $n$ and replace the trivial tangle with the one depicted inside the dashed circle in Figure \ref{fig-knotdiagrams}. The diagram thus created has four components intersecting the Conway sphere: $1,n,n+1,n+2$. Since the grafted-on tangle has 5 crossings, we also have components $n+3,n+4,n+5$ in the new diagram.

%Then there often exists a constant $k_0$ such that there is an augmentation $\ve'\in A(D')$ such that for any $1\le i\le n$, we have $\ve'(c_{ij}) = \ve(c_{ij})$ if $1\le j\le n$, $\ve'(c_{ij}) = k_0\ve(c_{ij})$ for $j=n+1,n+2$. 
Consider the knot $10_{153}$ that appears in the knot tables and its diagram $D_{153}$ shown in Figure \ref{fig-knotdiagrams}. The diagram can be cut at 4 points (as in the figure) to obtain two alternating tangles. Note that if we ``close'' the tangle outside of the circle by one arc attaching the components 5 and 6 and another arc attaching components 1 and 7 we obtain the labeled diagram $D_0$ of $K_0=5_2$ in Figure \ref{fig-firstexample}. 

Recall from Example \ref{example-1} that we determine an augmentation of $\hc(K_0)$ by sending $c_{12}$ to a root of $f(x):=x^3+x^2-2x-1$ (each of these augmentations has rank 2). We also note that $c_{14}=c_{12}^3-3c_{12}$ and $c_{15}=c_{12}^2-2$ are sent by such an augmentation to the other two roots.

There are 5 rank three augmentations in $A(D_{153})$. For each root $x_i$, $i=1,2,3$ of $f(x)$, one of these augmentations sends $c_{12}$ to $x_i$. Note that all three roots are real. Let $x_1,x_2,x_3$ be the roots in order, i.e.\ $x_1<x_2<x_3$. Denote by $\ve_i$ the augmentation with $\ve_i(c_{12})=x_i$. The matrix $C_{D_{153}}(\ve_2)$ is given below (above the diagonal), where $y_i, i=1,2,3$ denote the three real roots (in order) of $g(x)=x^3-2x^2-x+1$. The augmentations $\ve_1$ and $\ve_3$ are similarly described but under a cyclic permutation applied to the roots (of both $f$ and $g$).
%% put in matrix
{\small
\[\begin{pmatrix}2&x_2&x_2&x_3&x_1&-x_1&-2&1&1&-1 \\ &2&x_1&x_3&x_3&-x_3&-x_2&-y_3&-x_1&x_1 \\ & &2&x_1&x_2&-x_2&-x_2&-x_1&-y_3&y_3 \\ & & &2&x_2&-x_2&-x_3&-1&y_3&-y_3 \\ & & & &2&-2&-x_1&x_1&0&0 \\ & & & & &2&x_1&-x_1&0&0 \\ & & & & & &2&-1&-1&1 \\ & & & & & & &2&-1&1 \\ & & & & & & & &2&-2 \\ & & & & & & & & &2
\end{pmatrix}\]
}

We remark that $x_i\in[-2,2]$ for all $i$, $y_1,y_2\in[-2,2]$ and $y_3>2$. Indeed, $\ve_1$ and $\ve_3$ are elliptic augmentations and $\ve_2$ is a non-elliptic rank three augmentation.

We also note that, focusing on cords involving only components in the tangle giving $K_0$ (i.e.\ $\ve_{ij}$ for $1\le i,j\le 5$), we can check the augmentation in Example \ref{example-1} to see this $5\times 5$ submatrix equals $C_{D_0}(\ve_0)$. If we consider the other tangle in Figure \ref{fig-knotdiagrams} and close it by attaching components 1 and 5, and components 6 and 7, we obtain a diagram of $5_1$. We said that $A(D_{153})$ has 5 rank three augmentations. It seems that the other 2 augmentations have a similar relationship to the two rank 2 augmentations of $5_1$.
\label{example-2}
\end{example}

\begin{figure}[ht]
\begin{center}
\begin{tikzpicture}

\node[anchor=south west,inner sep=0] at (0,1) {\includegraphics[scale=0.5]{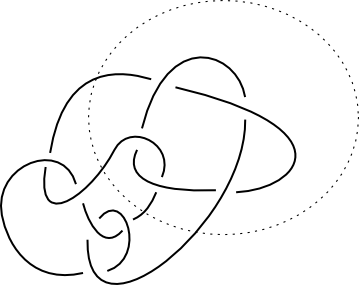}};
%%\draw[help lines] (0,0) grid (8,8);

\draw(1.5,0.5) node {$10_{153}$};

\draw(4,2.8) node {$8$};
\draw(2.3,2) node {$5$};
\draw(0.4,1.15) node {$3$};
\draw(1.1,1.9) node {$4$};
\draw(2.4,1.2) node {$1$};
\draw(2,1.5) node {$2$};
\draw(1,3.9) node {$7$};
\draw(1.35,2.8) node {$6$};
\draw(2.8,2.6) node {$9$};
\draw(2.6,4.4) node {$10$};
\end{tikzpicture}
\end{center}
\caption{Diagram of the determinant one knot $10_{153}$.}
\label{fig-knotdiagrams}
\end{figure}

\subsection{$\SL_2\bb R$ representations of $\pi_1(\dc K)$ and orderability}
\label{sec-sl2R}
In contrast to Section \ref{sec-SU2} suppose that $\ve\in A(D)$ is a real point, with rank three, which is not elliptic. Then it turns out that $\Y\circ\X(\ve)=\chi_\rho$, where $\rho$ is constructed as above, is the character of a representation into $\SL_2\bb R$.

Note that $\chi_\rho\in X^\tau(\dc K)$ is a real-valued character by Lemma \ref{lem-CharDetermination} and the fact that $\ve$ is real. (This may not be the case for $\chi_{\wt\rho}$.) But if a real-valued character is not the character of an $\SL_2\bb R$ representation then it is an $SU(2)$ character (see e.g.\ \cite[Proposition III.1.1]{MS}). But $\ve$ is not elliptic so this is not the case. Since $\chi_\rho$ is not an abelian character, $\rho$ is irreducible and so conjugate to an $SL_2\bb R$ representation. Often the representation $\rho$ itself is into $\SL_2\bb R$. 

\begin{thm}
Given a rank three real point $\ve\in A(D)$, if there exists $1\le i<j<k\le n$ with $|\ve_{ij}|>2$ and $\ve(i,j,k)>2$ then the non-abelian representation $\rho\co\pi_1(\dc K)\to\SL_2\bb C$, defined by $\rho(g_r) = A_r$ as above, has image in $\SL_2\bb R$.
\label{thm-SL2R}
\end{thm}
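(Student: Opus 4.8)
The plan is to verify directly that, for an appropriate labelling of $D$, every matrix $A_r$ produced by the construction of Section \ref{sec-makingReps} already lies in $\SL_2\bb R$; since $\rho$ is non-abelian (indeed irreducible) by the discussion preceding the theorem, that is the whole content. First I would relabel the components of $D$ so that the given triple $(i,j,k)$ becomes $(1,2,3)$. This is an admissible labelling for the construction: $|\ve_{ij}|>2$ gives $\ve_{12}\ne\pm 2$, and the $3\times 3$ determinant evaluated in the proof of Proposition \ref{prop-augtotrace} shows the principal submatrix of $C_D(\ve)$ on rows and columns $1,2,3$ has determinant $-2(\ve(1,2,3)-2)\ne 0$, so it is a size $3$ non-degenerate submatrix. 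With this labelling I would run the construction, obtaining $A_1=\text{Id}$ and $A_2,\dots,A_n$, and hence $\rho$ with $\rho(g_r)=A_r$.

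The next step is to trace which of the quantities appearing are real. Since $\ve_{12}\in\bb R$ and $\ve_{12}^2-4>0$, the equation $d^2-\ve_{12}d+1=0$ has real roots, so I would choose $d\in\bb R$; then $A_2=\text{diag}(d,\ve_{12}-d)\in\SL_2\bb R$, and $a=\frac{\ve_{23}-\ve_{13}d}{\ve_{12}-2d}$ is real (the denominator is nonzero since $\ve_{12}\ne\pm2$), so $A_3\in\SL_2\bb R$. For each $l\notin\{1,2,3\}$, the quantities $a_l=\frac{\ve_{2l}-\ve_{1l}d}{\ve_{12}-2d}$, $\alpha=1-a(\ve_{13}-a)$, and $\kappa_l:=c_l-b_l\alpha=a(\ve_{1l}-a_l)+a_l(\ve_{13}-a)-\ve_{3l}$ are all real, and $\alpha>0$ by Lemma \ref{lem-alpha} because $\ve_{12}^2-4$ and $\ve(1,2,3)-2$ are both positive by hypothesis. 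So everything in the matrix $A_l$ of (\ref{eqn-forAl}) is real except possibly $b_l$ and $c_l$.

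This last point is the crux. Using $c_l=\alpha b_l+\kappa_l$, the condition $\det(A_l)=1$ becomes the real quadratic $\alpha b_l^2+\kappa_l b_l+(1-a_l\ve_{1l}+a_l^2)=0$, with leading coefficient $\alpha\ne0$. I would compute its discriminant $\kappa_l^2-4\alpha(1-a_l\ve_{1l}+a_l^2)$ by invoking equation (\ref{eqn-magic}): since $\ve$ has rank three, $\det_l(\ve)=0$, so (\ref{eqn-magic}) yields $\kappa_l^2=\alpha(2a_l-\ve_{1l})^2-\alpha(\ve_{1l}^2-4)$, and substituting this the discriminant collapses identically to $\alpha\cdot 0=0$. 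Hence $b_l=-\kappa_l/(2\alpha)\in\bb R$ and $c_l=\alpha b_l+\kappa_l=\kappa_l/2\in\bb R$, so $A_l\in\SL_2\bb R$. Since every $A_r$ lies in $\SL_2\bb R$ and the $A_r$ generate the image of $\rho$, we conclude $\rho(\pi_1(\dc K))\subseteq\SL_2\bb R$.

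I expect the discriminant computation to be the only genuine obstacle; the rest is bookkeeping about reality of explicit expressions. It is worth noting the structural contrast with Theorem \ref{thm-unitary2}: there $\ve_{12}\in(-2,2)$ put $d$ on the unit circle, and a conjugation by $P$ was needed to land in $\SU(2)$, while here $|\ve_{ij}|>2$ forces $d$ (hence $a$, $a_l$) to be real from the outset, so no conjugation is needed. In both proofs the one genuinely computational input is that rank three makes the quadratic defining $b_l$ degenerate {--} here via the vanishing discriminant from (\ref{eqn-magic}), there via the relation $c_l+b_l\alpha=0$ {--} which is exactly what pins $b_l$ to a single (real) value.
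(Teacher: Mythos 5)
Your proof is correct and follows essentially the same route as the paper: reality of $d$ from $\ve_{12}^2>4$ handles $A_2,A_3$, and reality of $b_l$ is extracted from the discriminant of the quadratic $\det(A_l)=1$ together with equation (\ref{eqn-magic}) and the rank-three condition $\det_l(\ve)=0$. The only (cosmetic) divergence is that you correctly compute the discriminant to be $0$, so that $b_l=-\kappa_l/(2\alpha)$ is a real double root, whereas the paper's displayed discriminant drops the constant contribution $-4\alpha$ coming from the $-1$ in $\det(A_l)-1$ and so reports $4\alpha$; either value gives $b_l\in\bb R$, so the conclusion is unaffected.
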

\begin{proof}  
Use the notation from Section \ref{sec-makingReps}, assuming $i=1,j=2,k=3$. The fact $\ve_{12}^2 > 4$ implies that $d\in \bb R$, which ensures that $A_2$ and $A_3$ are in $\SL_2\bb R$ since $\ve$ is real. For $l\ne 1,2,3$ it is then sufficient to check that $b_l \in\bb R$.

Recall Equation (\ref{eqn-magic}) which, since $\ve$ has rank three, implies that $(c_l-b_l\alpha)^2 = \alpha\left((2a_l-\ve_{1l})^2 + 4 - \ve_{1l}^2\right)$. Now the discriminant of the (quadratic) equation $\det(A_l) = 1$ used to define $b_l$, is
	\begin{equation}(a(a_l - \ve_{1l}) + a_l(a - \ve_{1k}) + \ve_{3l})^2 + 4\alpha(a_l(\ve_{1l} - a_l)) = (c_l - b_l\alpha)^2+ 4\alpha(a_l(\ve_{1l} - a_l)).
	\label{eqn-discriminant}
	\end{equation}
So by (\ref{eqn-magic}) this discriminant is $4\alpha$, which is a real positive number (that is independent of $l$) by Lemma \ref{lem-alpha}. So $b_l\in \bb R$.
%% do this right before proof of theorem unitary2
\end{proof}

Consequently augmentations can provide information about orderings on $\pi_1(\dc K)$. Two types of orderings are of interest. A left-invariant order (or simply \textit{left ordering}) on a group $G$ is, as it sounds, a total ordering which is preserved by multiplication on the left. A \textit{circular ordering} on $G$ is one that is modelled by an oriented $S^1$. That is, one has for each $g\in G$ a total order $<_g$ on $G\setminus\{g\}$ with a compatibility condition on two such orders: if $g,h,x,y\in G$ are pairwise distinct elements and $x <_g y$ then either $x <_h y$ or (exclusive) $x <_g h <_g y$. 

It is known that if a compact, connected and $P^2$-irreducible 3-manifold has positive first Betti number then its fundamental group has a left ordering, leaving $\bb Q$ homology 3-spheres as the interesting case for left orderings on 3-manifold groups \cite{BRW}. Moreover, the case of a Seifert fibred space was characterized in the same paper of Boyer, Rolfsen and Wiest. Given a hyperbolic knot $K$ it is often the case that $\dc K$ is hyperbolic, which makes these examples of interest. 

We remark on two interesting and related conjectures. Recall that an \textit{L-space} is a $\bb Q$ homology 3-sphere $M$ with the property that the total rank of the Heegaard Floer homology $\widehat{HF}(M)$ equals $|H_1(M)|$ (which is the smallest it can possibly be). It is a conjecture, raised by Ozsv\'ath and Szab\'o, that if an irreducible $\bb Z$ homology sphere is an L-space, it is $S^3$ or the Poincar\'e homology sphere (see e.g.\ \cite[Conjecture 1]{hedden-levine}). In \cite{BGW} it is conjectured that any irreducible $\bb Q$ homology sphere (not equal to $S^3$) is an L-space if and only if its fundamental group does not admit a left ordering. 

Combining the conjectures above we would expect, minus two exceptions, that irreducible $\bb Z$ homology spheres (and, in particular, the two-fold branched cover of a prime knot with determinant 1) will have a left-orderable fundamental group. Augmentations provide a sufficient condition. By Theorem \ref{thm-SL2R}, for any knot $K$ and a rank three, non-elliptic, real augmentation of $K$, $\rho$ induces an irreducible representation of $\pi_1(\dc K)$ into $\text{PSL}_2\bb R$, which implies $\pi_1(\dc K)$ has a circular ordering. Moreover the finite cover of $\dc K$ corresponding to the commutator subgroup has left-orderable fundamental group \cite[Proposition 1.2]{BRW}.

\begin{cor}If the determinant of $K$ is 1 and one of its rank three reflective augmentations is real and non-elliptic, then $\pi_1(\dc K)$ has a left-invariant order.
\label{cor-det1covers}
\end{cor}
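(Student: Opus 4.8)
The plan is to observe that the statement is the specialization, to the case $\det K = 1$, of the discussion and of the representation-theoretic input assembled just above it; the determinant-one hypothesis is precisely what makes the ``commutator-subgroup cover'' appearing there trivial.

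\textbf{Topological input.} Since $\det K = |\Delta_K(-1)|$ equals the order of $H_1(\dc K)$, the hypothesis $\det K = 1$ gives $H_1(\dc K;\bb Z) = 0$, so $\pi_1(\dc K)$ is a perfect group. In particular $[\pi_1(\dc K),\pi_1(\dc K)] = \pi_1(\dc K)$, and the finite cover of $\dc K$ associated to the commutator subgroup is $\dc K$ itself.

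\textbf{Producing a circular order on $\pi_1(\dc K)$.} Fix a diagram $D$ of $K$ and let $\ve\in A(D)$ be the hypothesized real, rank three, non-elliptic augmentation; let $\rho\colon\pi_1(\dc K)\to\SL_2\bb C$ be the representation constructed in Section~\ref{sec-makingReps}, so $\chi_\rho = \Y\circ\X(\ve)$. Two facts are already available. First, since $\ve$ is real, $\chi_\rho$ is a real-valued character (Lemma~\ref{lem-CharDetermination}). Second, since $\ve$ has rank three, $\Z^\tau(\chi_\rho) = \ve$ by Theorem~\ref{thm-varietycorrespondence}, so $C_D(\Z^\tau(\chi_\rho))$ has rank three; an abelian (hence, up to character, diagonal) representation would make that matrix have rank at most two, so $\chi_\rho$ is not the character of an abelian representation and $\rho$ is therefore irreducible (equivalently, $\X(\ve)$ is non-metabelian, by Corollary~\ref{thm-IrrMetabelian}). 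Now a real-valued irreducible $\SL_2\bb C$ character is either an $SU(2)$ character or the character of an $\SL_2\bb R$ representation \cite[Proposition~III.1.1]{MS}, and the first alternative is excluded because $\ve$ is not elliptic (Theorem~\ref{thm-unitary2}). Hence $\rho$ is conjugate into $\SL_2\bb R$; composing with $\SL_2\bb R\to\text{PSL}_2\bb R\hookrightarrow\mathrm{Homeo}^+(S^1)$ yields a non-trivial action of $\pi_1(\dc K)$ on the circle, so $\pi_1(\dc K)$ is circularly orderable.

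\textbf{Conclusion.} By \cite[Proposition~1.2]{BRW}, applied exactly as in the paragraph preceding the statement, the finite cover of $\dc K$ corresponding to the commutator subgroup of $\pi_1(\dc K)$ has left-orderable fundamental group. By the first step this cover is $\dc K$, so $\pi_1(\dc K)$ itself is left-orderable.

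\textbf{Expected main obstacle.} There is essentially no new analytic content here: the construction of $\rho$, together with the $\SL_2\bb R$-versus-$SU(2)$ dichotomy, was carried out in Section~\ref{sec-sl2R}, and the passage from a $\text{PSL}_2\bb R$-action to left-orderability of the commutator-subgroup cover is the cited \cite[Proposition~1.2]{BRW}. The one point deserving care is confirming that the hypotheses of that proposition are genuinely met — in particular that non-ellipticity of $\ve$ forces $\chi_\rho$ itself, and not merely $\X(\ve)$, to fail to be an $SU(2)$ character — after which the determinant-one hypothesis does the rest by collapsing the relevant cover onto $\dc K$.
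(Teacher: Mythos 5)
Your argument is correct and is essentially the paper's own: the real, rank-three, non-elliptic augmentation yields an irreducible real character that cannot be $SU(2)$ (hence is $\SL_2\bb R$, giving a circular order via $\text{PSL}_2\bb R\hookrightarrow\mathrm{Homeo}^+(S^1)$), and \cite[Proposition~1.2]{BRW} applied to the commutator-subgroup cover, which is $\dc K$ itself since $\det K=1$ forces $H_1(\dc K)=0$, gives the left order. The point you flag at the end is genuinely resolved as you suspect, since all the quantities $\ve_{rs}$ and $\ve(r,s,t)$ are traces of elements and commutators in $\rho(\pi_1(\dc K))$, so an $SU(2)$ character $\chi_\rho$ would force $\ve$ to be elliptic.
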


In more generality, if the pullback by $\rho$ of the Euler class $\rho^*([e])\in H^2(\dc K)$ is zero, then it must be that $\pi_1(\dc K)$ has a left ordering (see \cite[Section 6.2]{ghys}); of course, when $\det(K)=1$ this is automatic as $H^2(\dc K)$ is trivial in this case. 
% would be nice to have a family of det=1 knots that has real reps, hence left-orderable fundmental group...

Utilizing a previous result in \cite{CH}, Corollary \ref{cor-det1covers} gives a method to construct infinite families of integer homology spheres which have left orderable fundamental group. For example, we may apply the following Corollary to any of the knots $K_0,K_1,K_2$ in Example \ref{example-3}. 

\begin{cor}Suppose that $P$ is a connected, closed braid contained in a solid torus $S^1\times D^2$ that is standardly embedded in $S^3$, meaning $S^1\times\{pt\}$ bounds a disk. For a knot $K\subset S^3$, let $P(K)$ be the satellite knot of $K$ with pattern $P$, and let $p$ be the winding number of $P$ in $S^1\times D^2$. If $p$ is odd and $K$ has a rank three reflective augmentation, which is real and non-elliptic, then so does $P(K)$. If $\det(K)=\det(P)=1$ as well, then $\dc{P(K)}$ has left orderable fundamental group.
\label{cor-braidsatellites}
\end{cor}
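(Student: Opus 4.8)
The plan is to split the claim into two pieces and finish by applying Corollary \ref{cor-det1covers} to $P(K)$. The first piece {--} that $P(K)$ inherits a real, non-elliptic rank three reflective augmentation from one, say $\ve$, of $K$ {--} is the transfer statement of \cite{CH}. When $P$ is a connected closed braid in the standardly embedded solid torus with odd winding number $p$, \cite{CH} produces from $\ve$ a reflective augmentation $\ve'$ of $P(K)$, again of rank three, whose cord values on the diagram components ``descending from the companion $K$'' are governed by those of $\ve$. Two consequences of that description are what we need: the transfer is defined over $\mathbb R$, so $\ve'$ is real whenever $\ve$ is; and a witness $|\ve_{rs}|>2$ (or $|\ve(r,s,t)|>2$) to the non-ellipticity of $\ve$ reappears among the cord values (or triple expressions) of $\ve'$, so $\ve'$ is non-elliptic as well. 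The oddness of $p$ is exactly what makes this work: a meridian of $K$ then links $P$ an odd number of times, so the companion side of $\dc{P(K)}$, cut along the companion torus, is the connected two-fold cover of $E_K$ that carries the $\pi_1(\dc K)$-data, and the transfer can be grafted onto it. I expect this first piece to be the main obstacle, although its substance is carried out in \cite{CH} and here it only needs to be quoted and unwound.

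The second piece is that $\det(K)=\det(P)=1$ together with $p$ odd force $\det(P(K))=1$. By Seifert's satellite formula for the Alexander polynomial,
\[
\Delta_{P(K)}(t) \;=\; \Delta_P(t)\,\Delta_K\!\left(t^{\,p}\right),
\]
evaluating at $t=-1$ and using $(-1)^p=-1$ gives
\[
\det\bigl(P(K)\bigr) \;=\; \bigl|\Delta_{P(K)}(-1)\bigr| \;=\; \bigl|\Delta_P(-1)\bigr|\cdot\bigl|\Delta_K(-1)\bigr| \;=\; \det(P)\,\det(K) \;=\; 1 .
\]
(Equivalently one could run Mayer--Vietoris for $\dc{P(K)}$ along the companion torus; the Alexander computation is quicker, and here ``$p$ odd'' is used a second time.)

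Combining the two pieces, $P(K)$ has determinant one and possesses a real, non-elliptic rank three reflective augmentation, so Corollary \ref{cor-det1covers} applies verbatim with $K$ replaced by $P(K)$ and produces a left-invariant order on $\pi_1(\dc{P(K)})$. The steps I would be most careful about are the preservation of the rank (exactly three) \emph{and} of non-ellipticity under the transfer of \cite{CH}; realness and the determinant identity are routine.
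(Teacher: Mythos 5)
Your proposal follows essentially the same route as the paper: both transfer the augmentation via the construction of \cite[Proposition 5.2]{CH} applied to the corresponding irreducible element of $R_{\aug}(E_K,3)$, use the oddness of $p$ (via $(-1)^p=-1$) to arrange that the new representation's cord values coincide with those of $\ve$ {--} hence preserving rank three, reflectivity, realness, and (non-)ellipticity {--} and then conclude with $\D_{P(K)}(t)=\D_P(t)\D_K(t^p)$ and Corollary \ref{cor-det1covers}. The proposal is correct and matches the paper's argument.
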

The proof will require some discussion of properties of a more general construction than the cord ring, the cord algebra $HC_0(K)$. Given an $n$-crossing diagram of $K$, $HC_0(K)$ admits a presentation similar to that of $\hc(K)$ in Theorem \ref{thm-knotdiagram}. However, in this case $c_{ij}\ne c_{ji}$ generally, for $1\le i\ne j\le n$. Please see \cite{Ng08} for details.

\begin{proof}Recall there is a correspondence, Theorem \ref{thm-correspondenceAugRep}, between elements in $\text{Hom}(HC_0(K),\bb C)$ and irreducible representations in $R_{\aug}(E_K)$. That correspondence relies on an isomorphism between $HC_0(K)$ and another algebra $\Pi_K$, over the ring $\bb Z[\mu^{\pm1},\lambda^{\pm1}]$, generated by the set of elements of $\pi_1(E_K)$. Given an element $\gamma\in\pi_1(E_K)$, write $[\gamma]$ for the corresponding element in $\Pi_K$. We note that the multiplications in $\pi_1(E_K)$ and $\Pi_K$ do not agree, i.e.\ $[\gamma_1\gamma_2]\ne[\gamma_1][\gamma_2]$. 

Let an $n$-crossing diagram of $K$ be chosen and set meridian generators to be $m_1,\ldots,m_n$ in $\pi_1(E_K)$. For each $1\le i\le n$ choose $\gamma_i$ such that $m_i = \gamma_i^{-1}m_1\gamma_i$ in $\pi_1(E_K)$. Define linking numbers $r_i=\text{lk}(\gamma_i,K)$. An isomorphism $HC_0(K)\cong\Pi_K$ can be chosen that sends $c_{ij}$ to $\mu^{r_j-r_i}[\gamma_i\gamma_j^{-1}]$, for $1\le i\ne j\le n$.

Given $k>0$, it was shown in \cite[Proposition 5.2]{CH} how to construct an irreducible element $\wt\rho\in R_{\aug}(E_{P(K)},k)$ from an irreducible $\rho\in R_{\aug}(E_K,k)$. In that proposition a collection of $np$ meridian generators $\{m_{ij}, 1\le i\le n, 1\le j\le p\}$ of $\pi_1(E_{P(K)})$ were chosen, so that $m_{i1}m_{i2}\ldots m_{ip}$ was identified with the meridian $m_i$ of $K$.

An augmentation $\ve\in\text{Hom}(HC_0(K),\bb C)$ factors through $\hc(K)$ (i.e.\ is reflective) if and only if $\ve(\mu)=-1$ and $\ve(c_{ij})=\ve(c_{ji})$. If $\ve$ is the augmentation corresponding to $\rho$ and $\ve(\mu) = -1$, then we remark that, since $(-1)^p=-1$ by assumption, the construction in \cite{CH} can be made so that $\wt\rho(m_{ik}) = \rho(m_i)$ for each $1\le i\le n$ and $1\le k\le p$. This implies $\wt\rho(\gamma_{ik}\gamma_{jl}^{-1}) = \rho(\gamma_i\gamma_j^{-1})$ which means that if $\ve$ is reflective then $\wt\ve$ is reflective (where $\wt\ve$ corresponds to $\wt\rho$ under Theorem \ref{thm-correspondenceAugRep}), and that $\wt\ve$ is elliptic or not according to $\ve$.

Finally, we note that $\D_{P(K)}(t) = \D_P(t)\D_{K}(t^p)$, and so $\det(P(K)) = 1$ if $\det(P)=1=\det(K)$ implying that $\dc{P(K)}$ has left orderable fundamental group by Corollary \ref{cor-det1covers}.
\end{proof}

We remark that one can construct a surjective homomorphism $\phi\co\pi_1(E_{P(K)})\to\pi_1(E_P)$ that preserves peripheral structure. In contrast, such a homomorphism does not generally exist from $\pi_1(E_{P(K)})$ onto $\pi_1(E_K)$ (see e.g.\ \cite[Proposition 3.4, Remark 3.5]{SW}). Taking advantage of the correspondence of Theorem \ref{thm-correspondenceAugRep}, one could use $\phi$ and a rank three augmentation $\ve\in\text{Hom}(HC_0(P),\bb C)$ to obtain a rank three augmentation $\wt\ve\in\text{Hom}(HC_0(P(K)),\bb C)$. Some care would be needed to check if $\ve$ being reflective implies $\wt\ve$ is reflective.

Recall Example \ref{example-2} above; let $K_0$ be the knot $10_{153}$. Since $y_3>2$, the augmentation $\ve_2$ we gave is non-elliptic and so determines a non-abelian representation $\rho\co\pi_1(\dc{K_0})\to\SL_2\bb R$. As $|\D_{K_0}(-1)|=1$, we have that $\pi_1(\dc{K_0})$ has a left ordering by Corollary \ref{cor-det1covers}.

\begin{figure}[ht]
\begin{center}
\begin{tikzpicture}

\node[anchor=south west,inner sep=0] at (0,1) {\includegraphics[scale=0.5]{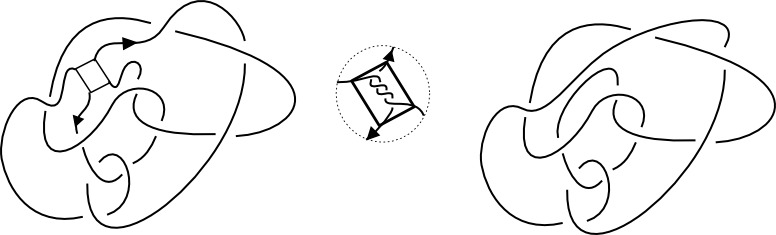}};
%%\draw[help lines] (0,0) grid (8,8);

\draw(1.5,0.5) node {$K_n$};
\draw(8.5,0.5) node {$K_\infty$};
\draw(5.25,2) node {{\small Case $n=2$}};

\draw(1.325,3.2) node {{\footnotesize $n$}};
%\draw(4,2.8) node {$8$};
%\draw(2.3,2) node {$5$};
%\draw(0.4,1.15) node {$3$};
%\draw(1.1,1.9) node {$4$};
%\draw(2.4,1.2) node {$1$};
%\draw(2,1.5) node {$2$};
%\draw(1,3.9) node {$7$};
%\draw(1.35,2.8) node {$6$};
%\draw(2.8,2.6) node {$9$};
%\draw(2.6,4.4) node {$10$};

%\draw(7,4.6) node {$1$};
%\draw(6.7,3.2) node {$2$};
%\draw(6,2.5) node {$3$};
%\draw(5.3,3.1) node {$4$};
%\draw(5,4.6) node {$5$};
%\draw(5.4,2.1) node {$6$};
%\draw(4.2,3) node {$7$};
%\draw(6,1.3) node {$8$};
%\draw(7.6,2.5) node {$9$};
%\draw(6.4,2.1) node {$10$};
\end{tikzpicture}
\end{center}
\caption{Left: A family of knots $K_n$ with determinant 1.\ Right: The split link $K_\infty$.}
\label{fig-knotfamily}
\end{figure}

\begin{example} We consider a family of examples of homology 3-spheres for which $\dc{10_{153}}$ is the first. For each $n\in\bb Z$ we let $K_n$ be the knot with diagram shown on the left of Figure \ref{fig-knotfamily}, where the boxed region contains $n$ full-twists as shown (with the twists being in the opposite sense if $n<0$). Since we use full-twists, $K_n$ is connected and oriented as shown in the figure. Suppose that in the given diagram of $K_n$ the component at the top-left corner of the $n$-twist region is labeled $i_n$ and the component at the top-right corner is labeled $j_n$.

For all $n\in\bb Z$, $|\D_{K_n}(-1)|=1$. To see this, use the Conway-Alexander polynomial to see that $\D_{K_n}(t) = \D_{K_0}(t)$ for all $n$. Indeed, the difference between the Conway-Alexander polynomial of $K_n$ and that of $K_{n-1}$ is a multiple of the Conway-Alexander polynomial of the link $K_\infty$ on the right of Figure \ref{fig-knotfamily}. But $K_\infty$ is a split link (in fact it is isotopic to the 2-component unlink) and so has Alexander polynomial 0. Thus, since $|\D_{K_0}(-1)| = 1$, the determinant of $K_n$ is 1 for all $n\in\bb Z$.

We have already given a rank three non-elliptic augmentation for $K_0=10_{153}$ which implies $\pi_1(\dc{K_0})$ is left-orderable. Such an augmentation exists for $K_1$ and $K_2$ also. In the case of $K_1$, let $z_1$ is the largest root of $x^6+x^5-7x^4-2x^3+7x^2+2x-1$ (all of which are real). Then there is such an augmentation with $\ve_{i_1j_1} = z_1$, and such that all other coordinates of the augmentation are given by a polynomial in $\bb Z[z_1]$. For the case of $K_2$, let $z_2$ be the largest real root of the polynomial $x^7+x^6-4x^5-10x^4+4x^3+14x^2+4x^2-1$. Then there is a rank three non-elliptic augmentation of $K_2$ with $\ve_{i_2j_2} = z_2$ and with other coordinates given by a polynomial in $\bb Z[z_2]$. Both $z_1$ and $z_2$ are larger than 2, so these augmentations give an $\SL_2\bb R$ representation. Hence $\dc{K_1}$ and $\dc{K_2}$ have left-orderable fundamental group. 

We note that, although $\dc{K_{-1}}$ is expected to not be an L-space, $K_{-1}$ does not have a non-elliptic rank three augmentation (and so $\pi_1(\dc{K_{-1}})$ does not have a non-abelian equivariant $\SL_2\bb R$ representation). The variety of augmentations for our given diagram of $K_{-1}$ has 11 points. Other than the rank 1 point they are determined by setting $\ve_{i_{-1},j_{-1}}$ to a root of $1+2x-5x^2-18x^3-14x^4+14x^5+21x^6-3x^7-8x^8+x^{10}$. Four of these are real roots in $[-2,2]$, and the augmentations determined are elliptic. The other roots are not real, and so do not give a real-valued augmentation.
\label{example-3}
\end{example}

\clearpage
\appendix
\begin{center}{\bf Appendix.}
\end{center}

In the tables of this appendix we record the number of real, rank three, reflective augmentations, both elliptic and non-elliptic, for all prime 3-bridge knots with up to 10 crossings (no knots in this range have bridge index larger than 3). Among $8$-crossing and $9$-crossing knots, none has any non-elliptic augmentation, so we record only the number of elliptic augmentations.

For each knot the crossing information for a diagram was obtained from the Gauss notation given at KnotInfo \cite{knotinfo-site}. Solutions to equations (\ref{eqn-diagram-defneqns}) in Section \ref{sec-HCknotdiagram} were mostly found using numerical methods. In the cases that the variety has positive dimension, we restricted to a subset that is 1-dimensional and verified that elliptic augmentations do exist on that subset, as do non-elliptics.

{\footnotesize

\begin{table}[ht]
\centering
\begin{tabular}{|c|c|c|c|}
\hline
{Name} & {\begin{tabular}[c]{@{}c@{}}Elliptic\end{tabular}} & {Name} & \multicolumn{1}{c|}{\begin{tabular}[c]{@{}c@{}}Elliptic\end{tabular}}  \\ \hline
$8_5$          	& 1             & $9_{33}$		& 0                 \\ \hline 
$8_{10}$       	& 1             & $9_{34}$		& 1 				\\ \hline 
$8_{15}$       	& 1             & $9_{35}$		& 1 				\\ \hline 
$8_{16}$		& 0 		    & $9_{36}$		& 2 				\\ \hline
$8_{17}$		& 0 			& $9_{37}$		& 1 				\\ \hline
$8_{18}$		& 0 			& $9_{38}$		& 0 				\\ \hline	
$8_{19}$		& 1 			& $9_{39}$		& 2 				\\ \hline
$8_{20}$		& 1 			& $9_{40}$		& 3 				\\ \hline
$8_{21}$		& 1 			& $9_{41}$		& 2 				\\ \hline
$9_{16}$       	& 1  			& $9_{42}$		& 2 				\\ \hline
$9_{22}$       	& 2             & $9_{43}$		& 2 				\\ \hline   
$9_{24}$       	& 1             & $9_{44}$		& 2 				\\ \hline   
$9_{25}$		& 2 			& $9_{45}$ 		& 2 				\\ \hline	
 $9_{28}$		& 1 			& $9_{46}$ 		& 1 				\\ \hline
 $9_{29}$		& 0 			& $9_{47}$ 		& 1 				\\ \hline
 $9_{30}$		& 2 			& $9_{48}$		& 1 				\\ \hline
 $9_{32}$		& 0				& $9_{49}$ 		& 1 				\\ \hline
\end{tabular}
\caption{$8$-crossing and $9$-crossing knots with bridge number $3$}
\label{table-8crossing}
\end{table}

\begin{table}[ht]
\centering
\begin{tabular}{|c|c|c|c|c|c|}
\hline
{Name} & {\begin{tabular}[c]{@{}c@{}}Elliptic\end{tabular}} & \multicolumn{1}{c|}{\begin{tabular}[c]{@{}c@{}}Non-elliptic\end{tabular}} & {Name} & {\begin{tabular}[c]{@{}c@{}}Elliptic\end{tabular}} & \multicolumn{1}{c|}{\begin{tabular}[c]{@{}c@{}}Non-elliptic\end{tabular}}   \\ \hline	
$10_{124}$		& 2 				& 0		&$10_{145}$		& 3 				& 1		\\ \hline
$10_{125}$		& 2 				& 0		&$10_{146}$		& 3 				& 1		\\ \hline
$10_{126}$		& 2 				& 0		&$10_{147}$		& 3 				& 1		\\ \hline
$10_{127}$		& 2 				& 0		&$10_{148}$		& 2 				& 0		\\ \hline
$10_{128}$		& 2 				& 1		&$10_{149}$		& 2 				& 0		\\ \hline
$10_{129}$		& 2 				& 1		&$10_{150}$		& 2 				& 1		\\ \hline
$10_{130}$		& 2 				& 1		&$10_{151}$		& 2 				& 1		\\ \hline
$10_{131}$		& 2 				& 1		&$10_{152}$		& 4 				& 0		\\ \hline
$10_{132}$		& 2 				& 1		&$10_{153}$		& 4 				& 1		\\ \hline
$10_{133}$		& 2 				& 1		&$10_{154}$		& 4 				& 2		\\ \hline
$10_{134}$		& 2 				& 1		&$10_{155}$		& 2 				& 0		\\ \hline
$10_{135}$		& 2 				& 1		&$10_{156}$		& 3 				& 0		\\ \hline
$10_{136}$		& 3 				& 1		&$10_{157}$		& 3 				& 0		\\ \hline
$10_{137}$		& 3 				& 1		&$10_{158}$		& 2 				& 0		\\ \hline
$10_{138}$		& 3 				& 1		&$10_{159}$		& 3 				& 0		\\ \hline
$10_{139}$		& 2 				& 1		&$10_{160}$		& 3 				& 0		\\ \hline
$10_{140}$		& 2 				& 1		&$10_{161}$		& 2 				& 0		\\ \hline
$10_{141}$		& 2 				& 1		&$10_{162}$		& 1 				& 0		\\ \hline
$10_{142}$		& 2 				& 1		&$10_{163}$		& 2 				& 0		\\ \hline
$10_{143}$		& 2 				& 1		&$10_{164}$		& 4 				& 0		\\ \hline
$10_{144}$		& 2 				& 1		&$10_{165}$		& 2 				& 0		\\ \hline
\end{tabular}
\caption{$10$-crossing non-alternating knots with bridge number $3$}
\label{table-10crossingnon}
\end{table}

\begin{table}[ht]
\centering
\begin{tabular}{|c|c|c|c|c|c|}
\hline
{Name} & {\begin{tabular}[c]{@{}c@{}}Elliptic\end{tabular}} & \multicolumn{1}{c|}{\begin{tabular}[c]{@{}c@{}}Non-elliptic\end{tabular}} & {Name} & \multicolumn{1}{c|}{\begin{tabular}[c]{@{}c@{}}Elliptic\end{tabular}} & \multicolumn{1}{c|}{\begin{tabular}[c]{@{}c@{}}Non-elliptic\end{tabular}}  \\ \hline	
$10_{46}$		& 2 				& 0		& $10_{85}$		& 1 				& 0		\\ \hline
$10_{47}$		& 2 				& 0		& $10_{86}$		& 0 				& 0		\\ \hline %
$10_{48}$		& 2 				& 0		& $10_{87}$		& 1 				& 0		\\ \hline
$10_{49}$		& 2 				& 0		& $10_{88}$		& 0 				& 0		\\ \hline
$10_{50}$		& 2 				& 1		& $10_{89}$		& 0 				& 0		\\ \hline
$10_{51}$		& 2 				& 1		& $10_{90}$		& 2 				& 0		\\ \hline
$10_{52}$		& 2 				& 1		& $10_{91}$		& 0 				& 0		\\ \hline
$10_{53}$		& 2 				& 1		& $10_{92}$		& 0 				& 0		\\ \hline
$10_{54}$		& 2 				& 1		& $10_{93}$		& 2 				& 0		\\ \hline
$10_{55}$		& 2 				& 1		& $10_{94}$		& 0 				& 0		\\ \hline
$10_{56}$		& 2 				& 1		& $10_{95}$		& 0 				& 0		\\ \hline
$10_{57}$		& 2 				& 1		& $10_{96}$		& 0 				& 0		\\ \hline
$10_{58}$		& 3 				& 1		& $10_{97}$		& 0				& 0		\\ \hline
$10_{59}$		& 3 				& 1		& $10_{98}$		& $\ge1\ \dim$	& $\ge1\ \dim$	\\ \hline
$10_{60}$		& 3 				& 1		& $10_{99}$		& $\ge1\ \dim$	& $\ge1\ \dim$	\\ \hline
$10_{61}$		& 2 				& 1		& $10_{100}$		& 2 				& 0		\\ \hline
$10_{62}$		& 2 				& 1		& $10_{101}$		& 3 				& 0		\\ \hline
$10_{63}$		& 2 				& 1		& $10_{102}$		& 2 				& 1		\\ \hline
$10_{64}$		& 2 				& 1		& $10_{103}$		& 3 				& 0		\\ \hline
$10_{65}$		& 2 				& 1		& $10_{104}$		& 3 				& 0		\\ \hline
$10_{66}$		& 2 				& 1		& $10_{105}$		& 4 				& 0		\\ \hline
$10_{67}$		& 3 				& 1		& $10_{106}$		& 3 				& 0		\\ \hline
$10_{68}$		& 3 				& 1		& $10_{107}$		& 2 				& 2		\\ \hline
$10_{69}$		& 3 				& 1		& $10_{108}$		& 3 				& 0		\\ \hline
$10_{70}$		& 2 				& 0		& $10_{109}$		& 0 				& 2		\\ \hline
$10_{71}$		& 2 				& 0		& $10_{110}$		& 3 				& 1		\\ \hline
$10_{72}$		& 2 				& 0		& $10_{111}$		& 2 				& 0		\\ \hline
$10_{73}$		& 2 				& 0		& $10_{112}$		& 3 				& 0		\\ \hline
$10_{74}$		& 1 				& 0		& $10_{113}$		& 3 				& 0		\\ \hline
$10_{75}$		& 1 				& 0		& $10_{114}$		& 2 				& 0		\\ \hline
$10_{76}$		& 1 				& 0		& $10_{115}$		& 2 				& 0		\\ \hline
$10_{77}$		& 1 				& 0		& $10_{116}$		& 1 				& 0		\\ \hline
$10_{78}$		& 1 				& 0		& $10_{117}$		& 3 				& 0		\\ \hline
$10_{79}$		& 4 				& 0		& $10_{118}$		& 2 				& 0		\\ \hline
$10_{80}$		& 4 				& 1		& $10_{119}$		& 4 				& 0		\\ \hline
$10_{81}$		& 4 				& 2		& $10_{120}$		& 3 				& 0		\\ \hline
$10_{82}$		& 1 				& 0		& $10_{121}$		& 3 				& 1		\\ \hline
$10_{83}$		& 0 				& 0		& $10_{122}$		& 3 				& 0		\\ \hline
$10_{84}$		& 1 				& 0		& $10_{123}$		& $\ge1\ \dim$ & $\ge1\ \dim$	\\ \hline
\end{tabular}
\caption{$10$-crossing alternating knots with bridge number $3$}
\label{table-10crossingalt}
\end{table}

}

\clearpage
\bibliography{CharactersDoubleCover_refs}

\providecommand{\bysame}{\leavevmode\hbox to3em{\hrulefill}\thinspace}
\providecommand{\MR}{\relax\ifhmode\unskip\space\fi MR }
% \MRhref is called by the amsart/book/proc definition of \MR.
\providecommand{\MRhref}[2]{%
  \href{http://www.ams.org/mathscinet-getitem?mr=#1}{#2}
}
\providecommand{\href}[2]{#2}
\begin{thebibliography}{GAMA93}

\bibitem[BGW13]{BGW}
S.~Boyer, C.~Gordon, and L.~Watson, \emph{On {L}-spaces and left-orderable
  fundamental groups}, Math. Ann. \textbf{356} (2013), no.~4, 1213--1245.

\bibitem[BRW05]{BRW}
S.~Boyer, D.~Rolfsen, and B.~Wiest, \emph{Orderable 3-manifold groups}, Ann. I.
  Fourier \textbf{55} (2005), no.~1, 243--288.

\bibitem[CL]{knotinfo-site}
J.C. Cha and C.~Livingston, \emph{Knotinfo: Table of knot invariants},
  \url{http://www.indiana.edu/~knotinfo}.

\bibitem[Cor14a]{Cor14b}
C.~Cornwell, \emph{{KCH} representations, augmentations, and {A}-polynomials},
  arXiv: 1310.7526v2 (2014).

\bibitem[Cor14b]{Cor14}
C.~Cornwell, \emph{Knot contact homology and representations of knot groups},
  J. Topology \textbf{7} (2014), no.~4, 1221--1242.

\bibitem[CH]{CH}
C.~Cornwell and D.~Hemminger, \emph{Augmentation rank of satellites with braid
  pattern}, arXiv: 1408.4110 [math.GT].

\bibitem[CS83]{CS}
M.~Culler and P.~B. Shalen, \emph{Varieties of group representations and
  splittings of 3-manifolds}, Ann. of Math. \textbf{117} (1983), no.~1,
  109--146.

\bibitem[Ghy01]{ghys}
\'E. Ghys, \emph{Groups acting on the circle}, L'Enseignement Math\'ematique
  \textbf{47} (2001), 329--407.

\bibitem[GAMA93]{GM}
F.~González-Acuña and J.M. Montesinos-Amilibia, \emph{On the character
  variety of group representations in {S}{L}$(2,\bb c)$ and {P}{S}{L}$(2,\bb
  c)$}, Math. Z. \textbf{214} (1993), no.~1, 627--652.

\bibitem[HHK14]{hed-her-kirk}
M.~Hedden, C.~M. Herald, and P.~Kirk, \emph{The pillowcase and perturbations of
  traceless representations of knot groups}, Geom. Topol. \textbf{18} (2014),
  211--287.

\bibitem[HL14]{hedden-levine}
M.~Hedden and A.~S. Levine, \emph{Splicing knot complements and bordered
  {F}loer homology}, J. Reine Agnew. Math. (2014).

\bibitem[Kla91]{klassen}
E.~P. Klassen, \emph{Representations of knot groups in {SU}(2)}, Trans. Amer.
  Math. Soc. \textbf{326} (1991), no.~2, 795--828.

\bibitem[KM10]{KM}
P.~Kronheimer and T.~Mrowka, \emph{Knots, sutures, and excision}, J.
  Differential Geom. \textbf{84} (2010), no.~2, 301--364. \MR{2652464
  (2012m:57059)}

\bibitem[KM11]{KMunknotdetector}
P.B. Kronheimer and T.S. Mrowka, \emph{Khovanov homology is an
  unknot-detector}, Publ. Math. IH\'ES \textbf{113} (2011), no.~1, 97--208.

\bibitem[Mag80]{M}
W.~Magnus, \emph{Rings of {F}ricke characters and automorphism groups of free
  groups}, Math. Z. \textbf{170} (1980), no.~1, 91--103.

\bibitem[Mat00]{Mattman}
T.~Mattman, \emph{The {C}uller-{S}halen seminorms of pretzel knots}, Ph.D.
  thesis, McGill University, 2000.

\bibitem[MS84]{MS}
J.~W. Morgan and P.~B. Shalen, \emph{Valuations, trees, and degenerations of
  hyperbolic structures, {I}}, Ann. Math. \textbf{120} (1984), no.~3, 401--476.

\bibitem[Nag13]{Nag}
F.~Nagasato, \emph{On the trace-free characters}, Representation spaces,
  twisted topological invariants and geometric structures of 3-manifolds, RIMS
  Kokyuroku, vol. 1836, 2013, pp.~110--123.

\bibitem[NY12]{NY}
F.~Nagasato and Y.~Yamaguchi, \emph{On the geometry of the slice of trace-free
  {S}{L}$_2(\mathbb{C})$-characters of a knot group}, Math. Ann. \textbf{354}
  (2012), no.~3, 967--1002.

\bibitem[Ng05]{Ng05b}
L.~Ng, \emph{Knot and braid invariants from contact homology. {II}}, Geom.
  Topol. \textbf{9} (2005), 1603--1637 (electronic), With an appendix by the
  author and Siddhartha Gadgil.

\bibitem[Ng08]{Ng08}
L.~Ng, \emph{Framed knot contact homology}, Duke Math. J. \textbf{141} (2008),
  no.~2, 365--406.

\bibitem[Ng14]{Ng14}
L.~Ng, \emph{A topological introduction to knot contact homology}, Contact and
  Symplectic Topology, Bolyai Society Mathematical Studies, vol.~26, Springer
  International Publishing, 2014, pp.~485--530.

\bibitem[SW06]{SW}
D.S. Silver and W.~Whitten, \emph{Knot group epimorphisms}, J. Knot Theory
  Ramifications \textbf{15} (2006), no.~2, 153--166.

\bibitem[Vog89]{V}
H.~Vogt, \emph{Sur les invariants fondamentaux des \'equations
  diff\'erentielles lin\'eaires du second ordre}, Ann. Sci. \'Ec. Norm.
  Sup\'er. \textbf{6} (1889), 3--71.

\bibitem[Zen15]{Zentner}
R.~Zentner, \emph{A class of knots with simple {SU}(2) representations}, arXiv:
  1501.0250v2 (2015).

\end{thebibliography}
\bibliographystyle{amsalpha-abbrv}

\end{document}